\newtheorem{theorem}{Theorem}[section]
\newtheorem{corollary}[theorem]{Corollary}
\newtheorem{lemma}[theorem]{Lemma}
\newtheorem{proposition}[theorem]{Proposition}
\newtheorem{remark}[theorem]{Remark}
\numberwithin{equation}{section}
\newcommand{\dem}{\noindent \textit{Proof.  }}
\newcommand{\1}{\mathbbm{1}}
\newcommand{\Z}{\mathbb{Z}}
\newcommand{\F}{\mathcal{F}}
\newcommand{\Sc}{\mathcal{S}}
\newcommand{\M}{\mathcal{M}}
\newcommand{\R}{\mathbb{R}}
\newcommand{\supp}{\operatorname{supp}}
\title{Sharp well-posedness for the $k$-dispersion generalized Benjamin-Ono equations: Short and long time results}
\date{}
\author{
Luccas Campos \,
Felipe Linares \,
Thyago S. R. Santos \,
}
\begin{document}
\maketitle
\begin{abstract}

\noindent We consider the $k$-dispersion generalized Benjamin-Ono ($k$-DGBO) equations. For nonlinearities with power $k \geq 4$, we establish local and global well-posedness results for the associated initial value problem (IVP) in both the critical and subcritical regimes, addressing sharp regularity in homogeneous and inhomogeneous Sobolev spaces. Additionally, our method enables the formulation of a scattering criterion and a scattering theory for small data. We also investigate the case $k = 3$ via frequency-restricted estimates, obtaining local well-posedness results for the IVP associated with the $3$-DGBO equation and generalizing the existing results in the literature for the whole subcritical range. For higher dispersion, these local results can be extended globally even for rough data, particularly for initial data in Sobolev spaces with negative indices. As a byproduct, we derive new nonlinear smoothing estimates.
% \noindent  We establish local and global well-posedness results for the dispersion generalized Benjamin-Ono equations in both homogeneous and inhomogeneous Sobolev spaces,  covering  subcritical and critical cases. Additionally, we established a scattering criterion and developed scattering theory for small initial data. We address the propagation of regularity, construct the wave operator, and explore nonlinear smoothing effects. Furthermore, we present alternatives for finite-time blow-up and analyze the asymptotic behavior of solutions.
\end{abstract}

\section{Introduction}
We study the Cauchy problem related to the family of $k$-dispersion generalized Benjamin-Ono ($k$-DGBO) equations:
\begin{equation}\label{DGBO}
\left\{
	\begin{array}{l}
	u_t + D_x^\alpha u_x + \mu \, u^ku_x= 0, \quad (t,x)  \in \R \times \R,\\
		u(0,x)=u_0(x)
	\end{array}
	\right.
\end{equation}
 where $u = u(t,x)$ is real-valued, $\alpha \in [1,2]$, $\mu \in \{\pm 1\}$  and $k \in \Z^+$. Here, ${D^\alpha_x}$ represents the fractional Laplacian, defined through the Fourier transform as follows:
 $$
 {D^\alpha_x f}:= \left(|\xi|^\alpha \widehat{f}(\xi)\right)^\vee={(\mathcal{H}\partial_x)^{\alpha}f},
 $$
 where  $\mathcal{H}$ represents the Hilbert transform

\begin{equation}\label{hilberttranform}
(\mathcal{H} f) (x):= \text{\textbf{p.v.}}\,\frac{1}{\pi}\int_{\R} \frac{f(y)}{x-y}\, dy := \lim_{\varepsilon \searrow 0}\,\frac{1}{\pi}\int_{|x-y|> \varepsilon} \frac{f(y)}{x-y}\, dy.
\end{equation}

 %  {\color{red}
%  Vale a pena pensar na versao complexa

%     \begin{equation}\label{complex-DGBO}
% 	\left\{
% 	\begin{array}{l}
% 	u_t + D_x^\alpha u_x + |u|^ku_x= 0, \quad (t,x)  \in \R \times \R,\\
% 		u(0,x)=u_0(x).
% 	\end{array}
% 	%
% 	\right.
% \end{equation}
% com $k$ real? A teoria e as contas vao ser as mesmas.
%  }
 
In the particular case $k=1$, the class of equations in \eqref{DGBO} arise as mathematical models for the unidirectional propagation of weakly nonlinear long waves (\hspace{0.02cm}\cite{MR273890}, \cite{MR1422759}). In particular, we notice that for $\alpha=2$, the equation \eqref{DGBO} reduces to the well-known Korteweg-de Vries (KdV) equation
$$
u_t + u_{xxx} + \mu \, uu_x =0
$$
and for $\alpha=1$, the equation becomes the Benjamin-Ono (BO) equation
$$
u_t + \mathcal{H}u_{xx} + \mu \, uu_x =0.
$$
The complete integrability of the equations \eqref{DGBO} remains uncertain in general. Nevertheless, their solutions satisfy, for all $t>0$, the conservation of mass
%$k>2\alpha$. 
%Important relations in this study are the conserved quantities satisfied by  real solutions of the IVP \eqref{DGBO} 
\begin{equation*}%\label{MCd}
M(u(t)):=\int_\mathbb{R} u^2(x,t)\, dx= M(u_0)
\end{equation*}
and the conservation of energy
\begin{equation*}%\label{ECd}
E(u(t)):=\dfrac{1}{2}\int_{\mathbb{R}}|D_x^{\frac{\alpha}{2}} u(x,t)|^2\;dx
-\dfrac{\mu}{k+2}\int_{\mathbb{R}} u^{k+2}(x,t)\;dx= E(u_0).
\end{equation*}
% These quantities allow us to establish {\it a priori} estimates in the energy space $H_x^{\frac{\alpha}{2}}(\R)$. 
Additionally, the equations in \eqref{DGBO} enjoy a scaling invariance, \textit{i.e.} if $u$ is solution of \eqref{DGBO}, then,  for all $\lambda>0$,
 $$
 u_{\lambda}(t,x):= \frac1{\lambda^{\frac{\alpha}{k}}}u\left(\frac{t}{\lambda^{\alpha+1}}, \frac{x}{\lambda}\right)
 $$
 also defines a solution of \eqref{DGBO} with initial data $u^\lambda_0 (x) := \lambda^{-\frac{\alpha}{k}}u_0 (\lambda^{-1} x)$. Furthermore
$$
\| u_{\lambda}(t,\cdot)\|_{\dot{H}^{s_k}_x} = \| u(t,\cdot)\|_{\dot{H}^{s_k}_x},
$$
where $\dot{H}^s_x(\R)$ denotes the homogeneous Sobolev space (see \eqref{sobolev homogeneo} below) and
 \begin{equation}\label{scale}
 s_k:=\frac{1}{2} - \frac{\alpha}{k}.
 \end{equation}
The real number $s_k$ is referred to as the \textit{Sobolev critical index}, as it represents the minimal regularity for which one can expect local well-posedness for the equations \eqref{DGBO}.

Concerning local well-posedness for the IVP \eqref{DGBO}, $k\ge2$, Kenig, Ponce and Vega showed in \cite{MR1086966}  that the IVP \eqref{DGBO} is locally well-posed in $H_x^s(\R)$,  for $s\ge(9-3\alpha)/4$ and globally well-posed
in $H_x^{\frac{\alpha}{2}}(\R)$ for $\alpha\ge9/5$. In  \cite{MR2859931}, Kenig, Martel and Robiano considered the family of dispersion generalized Benjamin-Ono equations,
\begin{equation}\label{cri-dgbo}
u_t-D_x^{\alpha}u_x+|u|^{2\alpha}u_x=0,
\end{equation}
which are critical with respect to both the $L_x^2$-norm and the global existence of solutions. Observe that the equations \eqref{cri-dgbo} are the
interpolated ones between the modified BO equation ($\alpha=1$) and the critical KdV equation ($\alpha=2$).  In \cite{MR2859931}
it was proved, among other results, the local well-posedness in the energy space $H_x^{\frac{\alpha}{2}}(\R)$ for $\alpha\in (1,2)$.
They also studied the blow-up problem in the same fashion as in \cite{MR1896235} and \cite{MR1824989}. More specifically, they established the finite time
blow-up of solutions with negative energy in the energy space when $\alpha$ is close to 2.

Several global well-posed aspects were studied by Farah, Linares and Pastor \cite{MR3200754} for solutions of \eqref{DGBO}. In particular, they showed
sharp conditions on the initial data to establish the global well-posedness theory in the energy spaces $H_x^{\frac{\alpha}{2}}(\R)$. We refer to \cite{MR3200754} for a more complete account of local/global issues regarding the Cauchy problem \eqref{DGBO}.

\medskip

%%%%%%%%%%%%%%%%%%%%%%%%%%%%%%%%%%%%%%%%%%%%%%%%%
The main goal of this work is to establish well-posedness results in Sobolev spaces $ \dot{H}^s_x(\mathbb{R}) $ and $ H^s_x(\mathbb{R}) $ for $ s \geq s_k $, addressing both the critical case ($ s = s_k $) and subcritical case ($ s > s_k $), thereby achieving the optimal regularity index for well-posedness. Additionally, we examine some qualitative properties of the solutions to the equations \eqref{DGBO}, including propagation of regularity and long-time behavior.

First, we consider the case where $k \geq 4$ in \eqref{DGBO}. Let $I \subset \R$ be an interval. For $ s\geq 0$ and $\alpha > 1$, we define the resolution space being
\begin{equation}
    \dot{X}^{s}(I) :=  
    %W^{s-\frac{1}{4},\frac{4}{1-\theta}}_xL^{\frac{4(\alpha+1)}{\theta}}_t \cap 
    C_t \dot H^s_x (I \times \mathbb R) \cap \dot W^{s+\frac{1}{2},\frac{4\alpha+2} {\alpha-1}}_xL^{\frac{4\alpha+2}{3}}_t (I \times \mathbb R),
\end{equation}
endowed with the canonical norm. We also fix the following indices for $(\alpha,s) \neq (1,s_k)$ %, defined for $s\geq s_k$, $\alpha >1$ and $k \in \N$  
 $$
 {p_s}= {p}(s,\alpha,k):= \frac{(2\alpha+1)k}{\alpha+2},
 $$
 and 
  $$
 {q_s}= {q}(s,\alpha,k):= \frac{(2\alpha+1)(\alpha+1)k}{2(\alpha^2-1) - (s-s_k)(2\alpha+1)k},
 $$
 which allows us to define the auxiliary spaces
\begin{equation*}
 S^{s}(I) = L^{ p_s}_xL^{ q_s}_t(I \times \mathbb R).
\end{equation*}
% We are now able to define what \textit{solution} means in our context. 

% \begin{definition}
%     {\color{red}If $k\geq 4$,} we say that $u$ is a solution to \eqref{DGBO} on $[0,T]$ with initial datum $u_0$ if $u \in \dot X^s_T \cap S^s_T$ for some $s \in \mathbb R$ and satisfies the \textit{Duhamel equation}
%     \begin{equation}\label{dunhamel solution}
%         u(t,x) = V_{\alpha}(t) u_0(x) + \mu\int_0^t V_{\alpha}(t-\tau) u^k(\tau,x) \partial_x u(\tau,x) \, d\tau.
%     \end{equation}
%     in $\dot X^s_T \cap S^s_T$.
% \end{definition}
\begin{remark}
When $I= [0,T]$, we denote
$$
\dot{X}^{s}([0,T]):= \dot{X}^{s}_T \quad \text{and}\quad  S^{s}([0,T]):=  S^{s}_T.
$$
\end{remark}
 \begin{remark}\label{Remark sobre Ss}
The auxiliary spaces $S^s_T$ play an important role in the asymptotic behavior. In fact, the finiteness of this norm allows the solution to be extended to larger time intervals, and its uniform boundedness for all $T > 0$, ensures global well-posedness and scattering. These topics will be further explored throughout this work.
 \end{remark}
 
We denote by $\{V_{\alpha}(t)\}_{t \in \R}$ the free evolution group associated with the linear part of the equation \eqref{DGBO} (see \eqref{free evolution group} below). The first result of this work is as follows:

\medskip

 \begin{theorem}[Critical local well-posedness for $k\geq 4$]\label{Teorema existencia critica} Let $\alpha \in (1,2]$ and $k \geq 4$. Consider $s_k=\frac{k-2\alpha}{2k}$ and $u_0 \in \dot H_x^{s_k}(\R)$. Then, there exist $\varepsilon_0=\varepsilon_0\left(\|u_0\|_{\dot H_x^{s_k}}\right)>0,$ and a unique solution $u$ of \eqref{DGBO} with initial data $u_0$ such that
 $$
 u \in  \dot{X}^{s_k}_T \cap S^{s_k}_T,
 $$
for $T=T(u_0)>0$ chosen to satisfy
\begin{equation*}
    \|V_\alpha(t)u_0\|_{S^{s_k}_T} \leq \varepsilon_0.
\end{equation*}
Moreover, the data-solution map $u_0 \mapsto u \in \dot{X}^{s_k}_T \cap S^{s_k}_T $ is locally uniformly continuous.
\end{theorem}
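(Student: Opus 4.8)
The plan is to set up a standard fixed-point argument via the Duhamel formulation, but in the right scaling-critical spaces. Write the solution as $u = \Phi(u)$, where
$$
\Phi(u)(t) := V_\alpha(t) u_0 - \mu \int_0^t V_\alpha(t-t') \bigl(u^k u_x\bigr)(t')\, dt'.
$$
First I would record the linear estimates: Strichartz-type (and smoothing) estimates for $V_\alpha(t)$ giving $\|V_\alpha(t)u_0\|_{\dot X^{s_k}_T} \lesssim \|u_0\|_{\dot H^{s_k}_x}$ and $\|V_\alpha(t)u_0\|_{S^{s_k}_T} \lesssim \|u_0\|_{\dot H^{s_k}_x}$, together with the corresponding retarded (inhomogeneous) estimates controlling $\bigl\|\int_0^t V_\alpha(t-t')F(t')\,dt'\bigr\|$ in $\dot X^{s_k}_T \cap S^{s_k}_T$ by a suitable dual space norm of $F$. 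These are the local-smoothing/maximal-function/Strichartz triad for fractional dispersion $\alpha \in (1,2]$; the exponents $\frac{4\alpha+2}{\alpha-1}$, $\frac{4\alpha+2}{3}$ defining $\dot X^{s}$ are chosen exactly so that the gain of $\tfrac12$ derivative in the $\dot W^{s+1/2,\cdot}_x$ component is admissible, and $p_s,q_s$ are chosen so that $S^s$ is scaling-invariant and Hölder-compatible with the nonlinearity.

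The core is the nonlinear (multilinear) estimate: control $u^k u_x = \frac{1}{k+1}\partial_x(u^{k+1})$ in the dual norm. I would distribute one derivative (or half-derivatives via a fractional Leibniz/Kato–Ponce argument) and then use Hölder in space-time to split $\|u^k u_x\|_{\text{dual}} \lesssim \|u\|_{S^{s_k}_T}^k \, \|u\|_{\dot X^{s_k}_T}$ (and a symmetric version), where the key point is that the exponents $p_s,q_s$ were rigged precisely so that $k$ copies of $S^{s_k}$ plus one copy carrying $s_k+\tfrac12$ derivatives land in the dual of the $\dot X^{s_k}$ norm. Here criticality means there is \emph{no} positive power of $T$ to spare, so the smallness must come entirely from the hypothesis $\|V_\alpha(t)u_0\|_{S^{s_k}_T} \le \varepsilon_0$: one shows that on the ball $\{ \|u\|_{S^{s_k}_T} \le 2\varepsilon_0,\ \|u\|_{\dot X^{s_k}_T} \le 2C\|u_0\|_{\dot H^{s_k}_x}\}$ the map $\Phi$ is a contraction, because the nonlinear contribution to the $S^{s_k}_T$-norm is $O(\varepsilon_0^{k}\|u_0\|_{\dot H^{s_k}_x})$ and, crucially, the nonlinear contribution to both norms carries at least $k\ge 4$ factors of $\varepsilon_0$, hence is absorbed for $\varepsilon_0$ small depending only on $\|u_0\|_{\dot H^{s_k}_x}$. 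The existence of such a $T=T(u_0)>0$ follows from $V_\alpha(t)u_0 \in S^{s_k}(\R)$ (a consequence of the global-in-time linear estimate) together with dominated convergence, since $\|V_\alpha(t)u_0\|_{S^{s_k}_T} \to 0$ as $T\to 0^+$.

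Uniqueness in $\dot X^{s_k}_T \cap S^{s_k}_T$ and local uniform continuity of the data-to-solution map follow from the same multilinear estimates applied to differences: for two solutions $u,v$ with data $u_0,v_0$, the difference $w=u-v$ solves a Duhamel equation with source $\mu\int_0^t V_\alpha(t-t')\partial_x\bigl(\text{(polynomial in }u,v\text{ of degree }k\text{)}\,w\bigr)$, and the multilinear estimate gives $\|w\|_{\dot X^{s_k}_T \cap S^{s_k}_T} \lesssim \|u_0-v_0\|_{\dot H^{s_k}_x} + \bigl(\|u\|_{S^{s_k}_T}+\|v\|_{S^{s_k}_T}\bigr)^{k}\|w\|_{\dot X^{s_k}_T \cap S^{s_k}_T}$; absorbing the last term using $\|u\|_{S^{s_k}_T}+\|v\|_{S^{s_k}_T}\lesssim \varepsilon_0$ closes the estimate. (One subtlety: the a priori smallness of $\|u\|_{S^{s_k}_T}$ for a given solution may need to be arranged by shrinking $T$, which is why the continuity is only \emph{local} and only \emph{uniform}, not Lipschitz, in general.)

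I expect the main obstacle to be the multilinear/nonlinear estimate in the critical dual space — in particular handling the half-derivative in the $\dot W^{s+1/2,\cdot}_x L^{\cdot}_t$ component of $\dot X^s$ via a fractional Leibniz rule while keeping all the Lebesgue exponents matched to $p_s,q_s$ and to the admissible Strichartz pair, and verifying that the endpoint case $\alpha$ near $2$ and small $k$ (here $k=4$) does not degenerate. The linear estimates themselves for $\alpha\in(1,2]$ are by now standard (Kenig–Ponce–Vega-type local smoothing plus the associated maximal and Strichartz estimates), so the real content is the bookkeeping that makes the fixed point close with smallness coming only from $\|V_\alpha(t)u_0\|_{S^{s_k}_T}$ rather than from a power of $T$.
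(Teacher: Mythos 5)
Your plan is correct and follows essentially the same route as the paper: a contraction argument for the Duhamel map on a ball in $\dot X^{s_k}_T \cap S^{s_k}_T$ defined by $\|u\|_{S^{s_k}_T}\lesssim \varepsilon_0$ and $\|u\|_{\dot X^{s_k}_T}\lesssim \|u_0\|_{\dot H^{s_k}_x}$, with the interpolated Strichartz/smoothing/maximal estimates for $V_\alpha$, the fractional Leibniz rule giving $\|u^{k+1}\|_{\dot N^{s_k}_T}\lesssim \|u\|_{S^{s_k}_T}^k\|u\|_{\dot X^{s_k}_T}$, and smallness drawn entirely from $\|V_\alpha(t)u_0\|_{S^{s_k}_T}\le\varepsilon_0$ with $\varepsilon_0$ depending on $\|u_0\|_{\dot H^{s_k}_x}$. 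The treatment of uniqueness and local uniform continuity via the difference estimate (after checking that nearby data admit the same $T$) is also the paper's argument.
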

The next result ensures that the regularity of the initial data is preserved for the solution obtained in Theorem \ref{Teorema existencia critica}.
\begin{proposition}[Propagation of regularity]\label{teo:propagation_regularity} Let $\alpha \in (1,2]$, $k \geq 4$ and $u_0 \in \dot H_x^ s \cap \dot H_x^ {s_k}$ for $s \geq 0$. Then the solution $u$  given by Theorem \ref{Teorema existencia critica} belongs to $ \dot X^s_T \cap S^s_T$.
\end{proposition}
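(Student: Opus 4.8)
The plan is to prove \emph{persistence of regularity} by running an a priori estimate on the Duhamel integral equation at regularity level $s$, exploiting that the solution $u$ furnished by Theorem~\ref{Teorema existencia critica} already satisfies the smallness bound $\|u\|_{S^{s_k}_T}\lesssim\varepsilon_0$. Writing the nonlinearity in divergence form, $u^ku_x=\frac{1}{k+1}\partial_x(u^{k+1})$, the solution obeys
\begin{equation*}
u(t)=V_\alpha(t)u_0-\frac{\mu}{k+1}\int_0^t V_\alpha(t-t')\,\partial_x\bigl(u^{k+1}\bigr)(t')\,dt'.
\end{equation*}
Applying $D_x^s$ and invoking the linear estimates for $\{V_\alpha(t)\}$ already in place for the proof of Theorem~\ref{Teorema existencia critica} --- the $\dot{X}$ and $S$ Strichartz, local smoothing and maximal-function bounds, together with $\|V_\alpha(\cdot)f\|_{\dot{X}^s_T\cap S^s_T}\lesssim\|f\|_{\dot H^s_x}$ --- in combination with the Kato--Ponce fractional Leibniz rule to redistribute the derivatives across the product, I would establish the $(k+1)$-linear estimate
\begin{equation*}
\Bigl\|\int_0^t V_\alpha(t-t')\,\partial_x\Bigl(\prod_{i=1}^{k+1}v_i\Bigr)(t')\,dt'\Bigr\|_{\dot{X}^s_T\cap S^s_T}\lesssim\sum_{j=1}^{k+1}\|v_j\|_{\dot{X}^s_T\cap S^s_T}\prod_{i\neq j}\|v_i\|_{S^{s_k}_T}.
\end{equation*}
The decisive structural point is that the factor carrying the top-order derivative $D_x^s$ is the only one measured in the $\dot{X}^s_T\cap S^s_T$ norm we wish to control, whereas the remaining $k$ factors are measured in the \emph{small} scaling-critical norm $S^{s_k}_T$; the extra half-derivative in the definition of $\dot{X}^s$ is exactly the smoothing gain that absorbs the $\partial_x$ in front of $u^{k+1}$.

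Granting this estimate, I would set every $v_i=u$ and combine it with $\|u\|_{S^{s_k}_T}\le C\varepsilon_0$ to obtain
\begin{equation*}
\|u\|_{\dot{X}^s_T\cap S^s_T}\le C\|u_0\|_{\dot H^s_x}+C(k+1)\,\varepsilon_0^k\,\|u\|_{\dot{X}^s_T\cap S^s_T}.
\end{equation*}
Since $\varepsilon_0$ depends only on $\|u_0\|_{\dot H^{s_k}_x}$, it is legitimate to shrink it further if necessary (this only shrinks $T$, consistently with Theorem~\ref{Teorema existencia critica}) so that $C(k+1)\varepsilon_0^k\le\frac12$; absorbing the last term then yields $\|u\|_{\dot{X}^s_T\cap S^s_T}\le 2C\|u_0\|_{\dot H^s_x}$, which is the claim. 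The one point requiring care before performing this absorption is the a priori finiteness of $\|u\|_{\dot{X}^s_T\cap S^s_T}$. I would obtain it by first solving the IVP by a contraction in $\dot{X}^s_{T'}\cap S^s_{T'}\cap\dot{X}^{s_k}_{T'}\cap S^{s_k}_{T'}$ on a short interval $[0,T']$ --- here one uses, exactly as above, that $\|V_\alpha(\cdot)u_0\|_{S^{s_k}_{T'}}\to 0$ as $T'\to 0$, so that the nonlinear term is a small perturbation --- which by the uniqueness in Theorem~\ref{Teorema existencia critica} coincides with $u$ on $[0,T']$; a standard continuation argument on $[0,T]$, using that $\|u\|_{S^{s_k}_T}$ is globally small there, then propagates this finiteness (and the bound) up to time $T$. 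Alternatively, one may approximate $u_0$ in $\dot H^{s_k}_x\cap\dot H^s_x$ by smooth data, apply the a priori bound to the uniformly close, regular approximating solutions, and pass to the limit using the uniform continuity of the data-to-solution map and lower semicontinuity of the norms.

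I expect the genuine difficulty to be the multilinear estimate itself: verifying that the Hölder exponents actually close over the entire parameter range $\alpha\in(1,2]$, $k\ge 4$ and all admissible $s\ge 0$ once the $s$-dependent time exponent $q_s$ enters the picture, and checking that the error terms produced by the fractional Leibniz rule (in which the $s$ derivatives get split between two factors) are likewise dominated by the right-hand side above. The finiteness/continuation step is routine by comparison.
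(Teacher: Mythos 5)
Your proposal is correct and follows essentially the same route as the paper: the paper's proof is precisely the fixed-point argument in the intersected space $\dot X^{s}_T\cap\dot X^{s_k}_T\cap S^{s_k}_T$ that you describe as your fallback for establishing a priori finiteness, driven by the same nonlinear estimate $\|u^{k+1}\|_{\dot N^{s}}\lesssim\|u\|_{S^{s_k}}^{k}\|u\|_{\dot X^{s}}$ in which only the derivative-bearing factor is measured at regularity $s$ and the other $k$ factors sit in the small critical norm. (Your worry about having to shrink $\varepsilon_0$ or $T$ is unnecessary --- the contraction constant in the $\dot X^{s}$ component is already controlled by $\|u\|_{S^{s_k}_T}^{k}\le(2\varepsilon_0)^{k}$ from Theorem \ref{Teorema existencia critica} --- and the issue you rightly flag about the fractional Leibniz rule for $s\ge 1/2$ is present in the paper as well.)
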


% \begin{remark}
% Note that the previous results here allows us to conclude the well-posedness in the \textit{inhomogeneous} Sobolev spaces $H^s_x(\R)$, for $s \geq s_k$.    
% \end{remark}
As noted in Remark \ref{Remark sobre Ss}, controlling the $S^s(I)$-norms helps us understand the long-time behavior of solutions to \eqref{DGBO}. More specifically, we focus on the scattering of small-amplitude solutions. In broad terms, the problem can be described as follows: given initial data with a finite norm in $S^{s_k}([0,+\infty))$, the corresponding solution $u(t)$ approaches a linear evolution as time progresses towards infinity. More precisely, we present the following result:

\begin{proposition}[\textit{Scattering criterion}]\label{prop:scattering_criterion}
Let $u_0 \in \dot H^{s_k}_x(\R)$ and suppose that the corresponding solution $u$ to \eqref{DGBO} is defined for all $t \geq 0$. If
\begin{equation*}
    \|u\|_{S^{s_k}({[0,+\infty)})} < + \infty,
\end{equation*}
then $u$ scatters forward in $\dot H^{s_k}_x(\R)$. In other words, there exists $u_+ \in \dot H_x^{s_k}(\R)$ such that
\begin{equation*}
    \lim_{t \to +\infty}\|u(t) - V_{\alpha}(t) u_+\|_{\dot H_x^{s_k}} = 0.
\end{equation*}

    Moreover, if $u_0 \in \dot H^{s}_x(\R)$ for some $s\geq-\frac{1}{2}$, then $u$ also scatters forward in $\dot H^{s}_x(\R)$. An analogous result holds on the time interval $(-\infty, 0]$ as well.
\end{proposition}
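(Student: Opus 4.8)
The plan is to prove scattering via the Duhamel formulation and a Cauchy-criterion argument, exploiting the Strichartz estimates implicitly available through the resolution space $\dot X^{s_k}$ and the auxiliary space $S^{s_k}$. Write the solution in integral form
\begin{equation*}
u(t) = V_\alpha(t) u_0 - \mu \int_0^t V_\alpha(t-t') \, \partial_x\bigl(u^{k+1}(t')\bigr)\, \frac{dt'}{k+1},
\end{equation*}
and recall that the natural candidate for the scattering state is
\begin{equation*}
u_+ := u_0 - \frac{\mu}{k+1} \int_0^{+\infty} V_\alpha(-t')\, \partial_x\bigl(u^{k+1}(t')\bigr)\, dt'.
\end{equation*}
The first task is to show this improper integral converges in $\dot H^{s_k}_x$; the second is to estimate $\|u(t) - V_\alpha(t) u_+\|_{\dot H^{s_k}_x} = \bigl\| \frac{\mu}{k+1}\int_t^{+\infty} V_\alpha(-t') \partial_x(u^{k+1}(t'))\, dt'\bigr\|_{\dot H^{s_k}_x}$ and send $t\to+\infty$. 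Both reduce, by the dual Strichartz / Kato-smoothing inequality built into the definition of $\dot X^{s_k}$, to controlling the nonlinearity on dyadic time pieces $I_j = [2^j, 2^{j+1})$ by the multilinear estimate that underlies Theorem \ref{Teorema existencia critica}, namely a bound of the form $\|\int_{I_j} V_\alpha(t-t')\partial_x(u^{k+1})\,dt'\|_{\dot X^{s_k}(I_j)} \lesssim \|u\|_{S^{s_k}(I_j)}^{k} \|u\|_{\dot X^{s_k}(I_j)}$.

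The key mechanism is that finiteness of $\|u\|_{S^{s_k}([0,\infty))}$ forces the tails $\|u\|_{S^{s_k}(I_j)}$ to be summable-to-small: for every $\eta>0$ there is $T_0$ with $\|u\|_{S^{s_k}([T_0,\infty))} < \eta$, and one can moreover partition $[T_0,\infty)$ into finitely or countably many subintervals on which the $S^{s_k}$-norm is below the small-data threshold $\varepsilon_0$. On each such subinterval the local theory (the contraction estimates from the proof of Theorem \ref{Teorema existencia critica}) gives $\|u\|_{\dot X^{s_k}} \lesssim \|V_\alpha(\cdot)u(t_\ell)\|_{S^{s_k}} + \text{(small)}\cdot\|u\|_{\dot X^{s_k}}$, hence a uniform bound $\|u\|_{\dot X^{s_k}([T_0,\infty))} \lesssim 1$; summing the dyadic nonlinear estimates then yields $\|\int_t^{\infty} V_\alpha(-t')\partial_x(u^{k+1})\,dt'\|_{\dot H^{s_k}_x} \to 0$ as $t\to\infty$, which gives both convergence of the defining integral for $u_+$ and the scattering limit. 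For the backward statement one repeats the argument on $(-\infty,0]$ with $V_\alpha(t)$, $t\le 0$.

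For the additional claim at regularity $s \ge -\tfrac12$, I would invoke Proposition \ref{teo:propagation_regularity} (persistence of regularity) for $s\ge 0$ to conclude $u\in \dot X^{s}\cap S^{s}$ and then rerun the same Cauchy argument at level $s$, the multilinear estimate now distributing $s$ derivatives onto one factor while the other $k$ factors are measured in $S^{s_k}$ — this is exactly why the $S^s$-spaces were defined with an $s$-dependent exponent. The range $s\ge -\tfrac12$ is dictated by the $+\tfrac12$ gain in the Kato-smoothing component $\dot W^{s+\frac12,\cdot}_x L^{\cdot}_t$ of $\dot X^s$: the derivative $\partial_x$ in the nonlinearity costs one derivative, half of which is absorbed by smoothing, so one needs $s+\tfrac12\ge 0$. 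The main obstacle is the bookkeeping in the subcritical/negative-regularity case: one must verify that the multilinear estimate still closes when $s<0$, i.e. that putting negative regularity on one factor is compensated by the high-integrability Strichartz norms on the others, and that the partition into small-$S^{s_k}$-norm intervals (which is a statement at the critical level) transfers to give finite $\dot X^s$-norm on the whole half-line. I expect this to follow from the same frequency-localized estimates used for Theorem \ref{Teorema existencia critica} and Proposition \ref{teo:propagation_regularity}, applied with Littlewood–Paley trichotomy, but it is the step that requires the most care.
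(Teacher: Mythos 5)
Your proposal is correct and follows essentially the same route as the paper: the authors define $u_+$ (their $\phi_+$) as the Duhamel integral over $[0,+\infty)$ and bound the tail by $\|u\|_{S^{s_k}([t,\infty))}^{k}\|u\|_{\dot X^{s_k}}$ via Lemma \ref{re_Strichartz} and Proposition \ref{prop_nonlinear_estimates} (see the proof of Corollary \ref{cor:critical_small_data}), while the global bound $\|u\|_{\dot X^{s_k}([0,\infty))}<\infty$ is obtained exactly by your absorption argument on intervals where the $S^{s_k}$-norm is small (see the proof of Corollary \ref{cor:subcritical_blowup_rate}, Case 1). Your treatment of the regularity range $s\ge -\tfrac12$ via the $+\tfrac12$ smoothing gain in $\dot X^{s}$ also matches the restriction $s\in(-\tfrac12,\tfrac12)$ under which the paper's nonlinear estimate is proved; the dyadic decomposition you introduce is not needed but is harmless.
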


As a result, we establish the following global-in-time theory for small initial data.

\begin{corollary}[Critical global theory for small data  ]\label{cor:critical_small_data} For $\alpha \in (1,2]$ and $k \geq 4$ there exists a number $\delta_{sd}>0$ such that, if $u_0 \in \dot{H}_x^{s_k}(\R)$ satisfies
\begin{equation}\label{time_well_posedness}
    \|u_0\|_{\dot H_x^{s_k}} < \delta_{sd},
\end{equation}
then the unique local solution obtained in Theorem \ref{Teorema existencia critica}  can be extended globally. Moreover, $u$ scatters forward in $\dot{H}_x^{s_k}(\R)$.
\end{corollary}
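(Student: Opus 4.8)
The plan is to deduce Corollary \ref{cor:critical_small_data} from Theorem \ref{Teorema existencia critica} and Proposition \ref{prop:scattering_criterion} by a standard small-data continuity/bootstrap argument. The key observation is that the time of existence in Theorem \ref{Teorema existencia critica} is determined solely by the smallness condition $\|V_\alpha(t)u_0\|_{S^{s_k}_T}\leq\varepsilon_0$, and that by the Strichartz-type estimate satisfied by the free group $\{V_\alpha(t)\}$ on the space $S^{s_k}(\R)$ (implicit in the definition of $\dot X^{s_k}$ and used in the proof of Theorem \ref{Teorema existencia critica}), one has
\begin{equation*}
\|V_\alpha(t)u_0\|_{S^{s_k}([0,+\infty))}\lesssim \|u_0\|_{\dot H_x^{s_k}}.
\end{equation*}
Hence, if $\delta_{sd}$ is chosen small enough that $C\delta_{sd}\leq\varepsilon_0$, then the smallness hypothesis of Theorem \ref{Teorema existencia critica} holds with $T=+\infty$; a priori this only yields a solution on each finite $[0,T]$, but with a uniform bound.

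First I would run the fixed-point argument of Theorem \ref{Teorema existencia critica} directly on the time interval $[0,+\infty)$ rather than on $[0,T]$: since $\|V_\alpha(t)u_0\|_{S^{s_k}([0,+\infty))}\leq\varepsilon_0$, the contraction mapping $\Phi(v)=V_\alpha(t)u_0 - \int_0^t V_\alpha(t-t')(\mu v^k v_x)(t')\,dt'$ is still a contraction on a ball of $\dot X^{s_k}([0,+\infty))\cap S^{s_k}([0,+\infty))$, because the nonlinear estimates used there are scale-invariant and do not see the length of the interval. This produces a global solution $u\in\dot X^{s_k}([0,+\infty))\cap S^{s_k}([0,+\infty))$ with, in particular, $\|u\|_{S^{s_k}([0,+\infty))}<+\infty$. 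Uniqueness of this global object, and its agreement with the local solution of Theorem \ref{Teorema existencia critica} on any $[0,T]$, follows from the usual uniqueness-on-subintervals argument. The same reasoning on $(-\infty,0]$ gives a solution global in time.

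Once global existence with finite $S^{s_k}$-norm is in hand, scattering is immediate: the solution satisfies exactly the hypothesis of Proposition \ref{prop:scattering_criterion} on $[0,+\infty)$, so $u$ scatters forward in $\dot H_x^{s_k}(\R)$, which is the last assertion. I would also remark that the scattering state $u_+$ is given explicitly by $u_+ = u_0 - \int_0^{+\infty} V_\alpha(-t')(\mu u^k u_x)(t')\,dt'$, the convergence of the integral in $\dot H_x^{s_k}$ being precisely what finiteness of $\|u\|_{S^{s_k}([0,+\infty))}$ buys.

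The only genuinely delicate point is verifying that the contraction estimate can be closed on the unbounded interval $[0,+\infty)$ with constants independent of the interval — i.e. that the nonlinear (Duhamel) estimate
\begin{equation*}
\Bigl\|\int_0^t V_\alpha(t-t')(\mu v^k v_x)(t')\,dt'\Bigr\|_{\dot X^{s_k}(I)\cap S^{s_k}(I)} \lesssim \|v\|_{S^{s_k}(I)}^{k}\,\|v\|_{\dot X^{s_k}(I)}
\end{equation*}
holds uniformly in $I$ (in particular for $I=[0,+\infty)$), which is where the precise choice of the exponents $p_s,q_s$ and of the Strichartz pair defining $\dot X^{s_k}$ matters; this is essentially already contained in the proof of Theorem \ref{Teorema existencia critica}, which is why the corollary is not hard once that theorem is available. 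A minor additional check is that the two half-line solutions glue to a genuine solution on all of $\R$, which is routine given uniqueness near $t=0$.
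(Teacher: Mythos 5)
Your proposal is correct and follows essentially the same route as the paper: choose $\delta_{sd}$ so that the global-in-time Strichartz bound $\|V_\alpha(t)u_0\|_{S^{s_k}(\R)}\lesssim\|u_0\|_{\dot H^{s_k}_x}$ forces the smallness hypothesis of Theorem \ref{Teorema existencia critica} to hold with $T=+\infty$, run the contraction on the unbounded interval to get a global solution with finite $S^{s_k}$-norm, and conclude scattering from that finiteness. The only cosmetic difference is that you cite Proposition \ref{prop:scattering_criterion} for the last step, whereas the paper writes out the scattering states $\phi_\pm$ explicitly; the content is identical.
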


Our next task will be to focus on the reciprocal problem of Proposition \ref{prop:scattering_criterion}, specifically constructing solutions to \eqref{DGBO} with a prescribed asymptotic behavior.

\begin{theorem}\label{wave operator}
Let $\alpha \in (1,2]$ and $k \geq 4$. For any $v_0 \in \dot{H}^{s_k}_x(\R)$ there exist $T_0=T_0(\|v_0\|_{\dot H^s_x}) \gg 1$ and $u \in C([T_0,\infty):\dot{H}^{s_k}_x(\R))$ a solution of IVP \eqref{DGBO} satisfying 
\begin{equation*}
\lim_{t \rightarrow +\infty}\| u(t) - V_\alpha(t)v_0 \|_{\dot{H}^{s_k}_x} = 0.
\end{equation*}
\end{theorem}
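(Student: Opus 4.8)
The plan is to construct the solution by a fixed-point argument on the integral equation expressed relative to the prescribed linear profile $V_\alpha(t)v_0$, working backwards from $t = +\infty$ rather than from a finite initial time. Set $w(t) := u(t) - V_\alpha(t)v_0$; then $u$ solves \eqref{DGBO} with the desired asymptotics if and only if $w$ satisfies the Duhamel-type equation
\begin{equation*}
w(t) = -\mu \int_t^{+\infty} V_\alpha(t-t') \,\partial_x\bigl( (w(t') + V_\alpha(t')v_0)^{k+1} \bigr)\, \frac{dt'}{k+1},
\end{equation*}
where I have absorbed the constant from $u^k u_x = \tfrac{1}{k+1}\partial_x(u^{k+1})$. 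The first step is to fix a sufficiently large $T_0 = T_0(\|v_0\|_{\dot H^{s_k}_x})$ so that, by the decay of the free evolution in the auxiliary space (the same Strichartz-type estimate underlying Theorem \ref{Teorema existencia critica}), one has $\|V_\alpha(t)v_0\|_{S^{s_k}([T_0,+\infty))} \leq \varepsilon_0$, with $\varepsilon_0$ the smallness threshold associated with $\|v_0\|_{\dot H^{s_k}_x}$. This is possible because the $S^{s_k}$-norm of the full half-line evolution $V_\alpha(t)v_0$ over $[0,+\infty)$ is finite (it is controlled by $\|v_0\|_{\dot H^{s_k}_x}$ via the global Strichartz estimate), so its tail over $[T_0,+\infty)$ tends to $0$ as $T_0 \to \infty$.

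The second step is to run the contraction on the ball
\begin{equation*}
B := \Bigl\{ w \in \dot X^{s_k}([T_0,+\infty)) \cap S^{s_k}([T_0,+\infty)) \;:\; \|w\|_{\dot X^{s_k} \cap S^{s_k}} \leq 2\varepsilon_0 \Bigr\},
\end{equation*}
using the very same multilinear/nonlinear estimates established in the proof of Theorem \ref{Teorema existencia critica}. The crucial point is that those estimates are of the form $\| \int V_\alpha(t-t')\partial_x(F) \|_{\dot X^{s_k} \cap S^{s_k}} \lesssim \|F_1\|_{S^{s_k}}\cdots\|F_{k+1}\|_{S^{s_k}}$ up to the derivative-gaining structure built into the definition of $\dot X^{s_k}$, and they are insensitive to whether the time integral runs forward from a finite time or backward from $+\infty$ — the retarded and advanced Duhamel operators obey identical bounds on $[T_0,+\infty)$. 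Expanding $(w + V_\alpha(\cdot)v_0)^{k+1}$ binomially, every term contains at least one factor and is estimated by a product of $S^{s_k}$-norms, each of which is $\leq 2\varepsilon_0 + \varepsilon_0 = 3\varepsilon_0$; choosing $\varepsilon_0$ small makes the map a contraction on $B$. Continuity in $t$ with values in $\dot H^{s_k}_x$ follows from $w \in \dot X^{s_k} \subset C_t \dot H^{s_k}_x$, giving $u = w + V_\alpha(\cdot)v_0 \in C([T_0,\infty):\dot H^{s_k}_x)$.

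The third step is to read off the scattering conclusion: from the fixed-point equation,
\begin{equation*}
\|w(t)\|_{\dot H^{s_k}_x} = \Bigl\| \int_t^{+\infty} V_\alpha(t-t')\partial_x\bigl((w+V_\alpha v_0)^{k+1}\bigr)\frac{dt'}{k+1} \Bigr\|_{\dot H^{s_k}_x} \lesssim \|w + V_\alpha(\cdot)v_0\|_{S^{s_k}([t,+\infty))}^{k+1} \xrightarrow[t\to+\infty]{} 0,
\end{equation*}
since the $S^{s_k}$-norm over $[t,+\infty)$ is the tail of a finite quantity. Hence $\|u(t) - V_\alpha(t)v_0\|_{\dot H^{s_k}_x} \to 0$ as claimed. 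I expect the main obstacle to be purely bookkeeping rather than conceptual: one must verify that the nonlinear estimates proved for Theorem \ref{Teorema existencia critica} were stated (or can be trivially restated) on an arbitrary half-line $[T_0,+\infty)$ with constants independent of $T_0$, and that the global-in-time Strichartz bound $\|V_\alpha(t)v_0\|_{S^{s_k}(\R)} \lesssim \|v_0\|_{\dot H^{s_k}_x}$ — which legitimizes the tail-smallness in Step 1 — is genuinely available at the critical scaling (it is, being the linear estimate that drives the whole scheme). No new analytic input beyond what Theorem \ref{Teorema existencia critica} already requires is needed; the wave-operator construction is the mirror image of the local theory with the time integration reversed.
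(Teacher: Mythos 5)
Your proposal is correct and follows essentially the same route as the paper: a backward-in-time contraction for $w = u - V_\alpha(\cdot)v_0$ on $[T_0,\infty)$, with $T_0$ chosen so that the tail $\|V_\alpha(t)v_0\|_{S^{s_k}([T_0,\infty))}$ is small, using the same Strichartz and nonlinear estimates as Theorem \ref{Teorema existencia critica}. The only slip is bookkeeping: the nonlinear estimate places $k$ factors in $S^{s_k}$ and one in $\dot X^{s_k}$ (the latter of size $\lesssim \|v_0\|_{\dot H^{s_k}_x}$, not small), so the contraction and the final decay close through $\varepsilon_0^k(\varepsilon_0+\|v_0\|_{\dot H^{s_k}_x})$ with $\varepsilon_0$ depending on $\|v_0\|_{\dot H^{s_k}_x}$ — exactly as you anticipate — rather than through a product of $k+1$ uniformly small factors.
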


 From now on, we will refer to Theorem \ref{wave operator} as the \textit{construction of a wave operator} for the equation in \eqref{DGBO}. In the following, we will consider the local Cauchy theory in the subcritical case. Our first result in this direction is as follows:

 \begin{theorem}[Subcritical well-posedness]\label{teo:subcritical_lwp}  Let $\alpha \in (1,2]$ and $k \geq 4$.
 
 \begin{itemize}
 \item [1.] (Homogeneous Case) Let $s_k < s < s_k + \frac{2(\alpha^ 2-1)}{(2\alpha+1)k}$ and $u_0 \in \dot H_x^ s(\R)$. Then, there exist $T(\|u_0\|_{\dot H^s_x})>0$ and a unique solution $u$ to \eqref{DGBO}  such that $u \in  \dot X^s_T \cap S^s_T$.  

 \item[2.] (Inhomogeneous Case) Let $s>s_k$ and $u_0 \in H_x^ s(\R)$. Then, there exist $T(\|u_0\|_{\dot H^s_x})>0$ and a unique solution $u$ to \eqref{DGBO}  such that $u \in  \dot X^s_T \cap S^s_T$. 
 \end{itemize}
 In both cases, the data-solution map $u_0 \mapsto u \in \dot{X}^{s}_T \cap S^{s}_T $ is locally uniformly continuous.
\end{theorem}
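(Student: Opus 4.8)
The plan is to rewrite \eqref{DGBO} in Duhamel form,
\begin{equation*}
u=\Phi_{u_0}(u),\qquad \Phi_{u_0}(u)(t):=V_\alpha(t)u_0-\frac{\mu}{k+1}\int_0^t V_\alpha(t-t')\,\partial_x\!\left(u^{k+1}(t')\right)dt',
\end{equation*}
and to solve it by a Banach fixed point argument in a ball of $\dot X^s_T\cap S^s_T$, exactly as in the proof of Theorem \ref{Teorema existencia critica}. The genuinely new feature is that in the subcritical regime $s>s_k$ one gains a positive power of $T$: since $p_s=\frac{(2\alpha+1)k}{\alpha+2}$ does not depend on $s$, and $(p_s,q_s)$ is precisely the Strichartz pair admissible at regularity $s$ (with $q_s$ increasing in $s$ and finite exactly when $s<s_k+\frac{2(\alpha^2-1)}{(2\alpha+1)k}$), Hölder in time on $[0,T]$ gives
\begin{equation*}
\|V_\alpha(t)u_0\|_{S^{s_k}_T}\le T^{\theta}\,\|V_\alpha(t)u_0\|_{L^{p_s}_xL^{q_s}_t(\R\times\R)}\lesssim T^{\theta}\,\|u_0\|_{\dot H^s_x},\qquad \theta:=\frac{1}{q_{s_k}}-\frac{1}{q_s}>0,
\end{equation*}
and likewise for the Duhamel term. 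This replaces the smallness hypothesis $\|V_\alpha(t)u_0\|_{S^{s_k}_T}\le\varepsilon_0$ of Theorem \ref{Teorema existencia critica} by a bound in which $T$ depends only on $\|u_0\|_{\dot H^s_x}$.

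\textbf{Homogeneous case.} First I would record, now at regularity $s$, the linear estimates underlying the space $\dot X^{s_k}_T$ in Theorem \ref{Teorema existencia critica}: $\|V_\alpha(t)u_0\|_{\dot X^s_T}\lesssim\|u_0\|_{\dot H^s_x}$, from unitarity on $C_t\dot H^s_x$ and the Kato-type smoothing estimate feeding the $\dot W^{s+1/2,\frac{4\alpha+2}{\alpha-1}}_xL^{\frac{4\alpha+2}{3}}_t$ component, together with the $T^\theta$-gain above. The Duhamel term is then handled by reusing, essentially verbatim, the multilinear estimate from the critical theory: the derivative in $\partial_x(u^{k+1})$ is recovered through the inhomogeneous (dual) smoothing estimate, and $D_x^{s}$ is distributed by the fractional Leibniz and chain rules onto a single factor, measured by the $\dot W^{s+1/2,\cdot}_xL^{\cdot}_t$-component of $\|u\|_{\dot X^s_T}$, while the remaining $k$ factors are placed in $S^s_T$ (or in interpolants between $S^{s_k}_T$ and $S^s_T$). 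The retarded Strichartz inequality then yields
\begin{equation*}
\left\|\int_0^t V_\alpha(t-t')\,\partial_x(u^{k+1})\,dt'\right\|_{\dot X^s_T\cap S^s_T}\lesssim T^{\theta'}\,\|u\|_{\dot X^s_T\cap S^s_T}^{\,k+1},\qquad\theta'>0,
\end{equation*}
the power $T^{\theta'}$ arising from non-endpoint time-Lebesgue exponents; the corresponding difference estimate is the same computation applied to $\partial_x(u^{k+1}-v^{k+1})$. Hence on the ball of radius $2C\|u_0\|_{\dot H^s_x}$ in $\dot X^s_T\cap S^s_T$ the map $\Phi_{u_0}$ is a contraction as soon as $T=T(\|u_0\|_{\dot H^s_x})$ is small enough; this gives existence and uniqueness in the ball, uniqueness in all of $\dot X^s_T\cap S^s_T$ follows by the standard connectedness argument, and locally uniform continuity of $u_0\mapsto u$ from the difference estimate.

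\textbf{Inhomogeneous case.} Since $k\ge4$ forces $s_k\ge0$, for $u_0\in H^s_x$ with $s>s_k$ one has $u_0\in\dot H^{s_k}_x\cap\dot H^s_x$ with $\|u_0\|_{\dot H^{s_k}_x}+\|u_0\|_{\dot H^s_x}\lesssim\|u_0\|_{H^s_x}$; hence Theorem \ref{Teorema existencia critica} and Proposition \ref{teo:propagation_regularity} already produce a solution in $\dot X^{s_k}_T\cap S^{s_k}_T\cap\dot X^s_T\cap S^s_T$, and the $T^\theta$-gain lets us choose $T=T(\|u_0\|_{H^s_x})$ so that $\|V_\alpha(t)u_0\|_{S^{s_k}_T}\le\varepsilon_0(\|u_0\|_{\dot H^{s_k}_x})$. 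It then remains to upgrade this to $C_tH^s_x$, i.e.\ to control the low-frequency ($L^2_x$) content; for this I would carry $\|u\|_{C_tL^2_x}$ along in the iteration and bound the Duhamel contribution to $C_tL^2_x$ by the dual smoothing inequality $\big\|\int_0^tV_\alpha(t-t')\partial_x g\,dt'\big\|_{L^\infty_tL^2_x}\lesssim\|D_x^{1-\alpha/2}g\|_{L^1_xL^2_t}$ with $g=u^{k+1}$, placing the $\big(1-\tfrac\alpha2\big)$-derivative on a factor controlled by the $\dot W^{s+1/2,\cdot}_xL^{\cdot}_t$-component (legitimate since $1-\tfrac\alpha2<\tfrac12\le s+\tfrac12$) and the rest in $S^s_T$- and $L^\infty_x$-type norms, again with a power of $T$ to spare. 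Alternatively, once existence is secured, the $L^2_x$-bound is simply the conserved mass $M(u(t))=M(u_0)$. Uniqueness and uniform continuity are obtained as in the homogeneous case.

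\textbf{Main obstacle.} The crux is the multilinear nonlinear estimate: one must choose the Hölder triples, in space and in time, so that $D_x^{s}$ (together with the auxiliary $D_x^{1/2}$ or $D_x^{1-\alpha/2}$) is legitimately moved onto exactly one factor by the fractional Leibniz and chain rules, so that the exponents falling on the other $k$ factors are exactly $p_s,q_s$ (or lie between $q_{s_k}$ and $q_s$ and can be interpolated), and so that every Strichartz estimate invoked is admissible and strictly non-endpoint, leaving a genuine positive power of $T$. For $k\ge4$ there is ample slack, the high power of the nonlinearity leaving several factors to be absorbed in the inexpensive space $S^s_T$; the delicate part is only the bookkeeping of these exponents and the verification of the endpoint exclusions — precisely where $\alpha>1$ and the strict upper bound $s<s_k+\frac{2(\alpha^2-1)}{(2\alpha+1)k}$ enter — and no idea beyond the proof of Theorem \ref{Teorema existencia critica} is needed.
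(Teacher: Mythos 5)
Your proposal is correct and follows essentially the same route as the paper: a contraction in a ball of radius $\sim\|u_0\|_{\dot H^s_x}$ in $\dot X^s_T\cap S^s_T$, where the subcritical gain is exactly your $\theta=\tfrac{1}{q_{s_k}}-\tfrac{1}{q_s}=\tfrac{s-s_k}{\alpha+1}$ per factor, obtained by H\"older in time inside the nonlinear estimate of Proposition \ref{prop_nonlinear_estimates}, yielding $\|u^{k+1}\|_{\dot N^s_T}\lesssim T^{\frac{k(s-s_k)}{\alpha+1}}\|u\|_{S^s_T}^k\|u\|_{\dot X^s_T}$ and hence $T\ll\|u_0\|_{\dot H^s_x}^{-\frac{\alpha+1}{s-s_k}}$. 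For the inhomogeneous case (whose proof the paper omits, remarking only that one may run the fixed point with metrics at the $L^2$ level to remove the upper restriction on $s$), your variant is fine, with the small caveat that when $s\geq s_k+\frac{2(\alpha^2-1)}{(2\alpha+1)k}$ the space $S^s$ is undefined, so the H\"older step must be routed through an intermediate $S^{\sigma}$ with $s_k<\sigma<s_k+\frac{2(\alpha^2-1)}{(2\alpha+1)k}$, which the finite-mass assumption makes available via $\|u_0\|_{\dot H^{\sigma}_x}\lesssim\|u_0\|_{H^s_x}$.
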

The Theorem \ref{teo:subcritical_lwp} can also be used to understand the asymptotic behavior of solutions, mainly. %The argument is standard and we do not pursue it here.

 \begin{corollary}[Blow-up alternative]\label{cor:subcritical_blowup_rate} Let $\alpha \in (1,2]$, $k \geq 4$ and $s_k  < s < s_k + \frac{2(\alpha^ 2-1)}{(2\alpha+1)k}$.
 
 \begin{enumerate}
 \item[1.] (Subcritical case) If $u_0 \in \dot H_x^s(\R) \cap \dot H_x^{s_k}(\R)$. Let $T^*> 0$ be the maximal time of existence of the corresponding solution $ u \in \dot X^s_{T^*}\cap S^s_{T^*}$. Then, if $T^*<\infty$ we have
\begin{equation*}\label{infinite_subcritical_scattering_norm}
    \|u\|_{S^s_{T^*}} = +\infty.
\end{equation*}
Moreover, we have the following blow-up rate
\begin{equation*}
    \|u(t)\|_{\dot H_x^s} \gtrsim \frac{1}{(T^* -t)^{\frac{s-s_k}{\alpha+1}}}.
    %{(s-s_k)/(\alpha+1)}}.
\end{equation*}
\item[2.] (Critical case) If $u_0 \in \dot H^{s_k}_x(\R)$ and the maximal time of existence $T^*>0$ of the corresponding solution 
$$
u \in \dot X^{s_k}_{T^*} \cap S^{s_k}_{T^*}
$$
is finite, then
\begin{equation*}
    \|u\|_{ S^{s_k}_{T^*}} = +\infty.
\end{equation*}
\end{enumerate}

Analogous results holds for any $s \geq s_k$ in the inhomogeneous case.
\end{corollary}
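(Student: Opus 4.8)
The plan is to derive the blow-up alternative and the blow-up rate directly from the local well-posedness statements (Theorems \ref{Teorema existencia critica} and \ref{teo:subcritical_lwp}, together with Proposition \ref{teo:propagation_regularity}), exploiting the scaling structure \eqref{scale}. I would set up the maximal time of existence $T^*$ in the standard way: let $T^*$ be the supremum of all $T>0$ for which the solution $u\in\dot X^s_T\cap S^s_T$ (and, in the subcritical case, simultaneously $u\in \dot X^{s_k}_{T}\cap S^{s_k}_T$ via Proposition \ref{teo:propagation_regularity}) exists and is unique; such a $T^*$ is well-defined and positive by the local theory.

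\medskip

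First I would prove the blow-up alternative (the $S^s$-norm statement). I argue by contraposition: suppose $T^*<\infty$ but $\|u\|_{S^s_{T^*}}<+\infty$. I want to show $u$ can be continued past $T^*$, contradicting maximality. The key is that the local existence time in Theorem \ref{Teorema existencia critica} / Theorem \ref{teo:subcritical_lwp} is governed not by $T$ itself but by the smallness of a free-evolution quantity $\|V_\alpha(t-t_0)u(t_0)\|_{S^{s_k}}$ (for the critical piece) on the relevant interval. The finiteness of $\|u\|_{S^s_{T^*}}$, combined with the Duhamel integral equation $u(t)=V_\alpha(t-t_0)u(t_0)-\mu\int_{t_0}^t V_\alpha(t-t')u^k u_x(t')\,dt'$ and the nonlinear and Strichartz-type estimates underlying the contraction argument, forces $\|u\|_{\dot X^s_{T^*}}<+\infty$ as well; in particular $u(t)$ converges in $\dot H^s_x$ (and in $\dot H^{s_k}_x$) as $t\uparrow T^*$ to some $u(T^*)$. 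Moreover $\|V_\alpha(t-T^*+\eta)u(T^*-\eta)\|_{S^{s_k}_{(T^*-\eta,\,T^*-\eta+\tau)}}$ can be made $<\varepsilon_0$ for some $\tau>0$ uniform in small $\eta$, because this quantity is controlled by the $S^{s_k}$-norm of the tail of $u$ near $T^*$ plus an error that vanishes as the interval shrinks (again via the Duhamel formula and the smallness of $\|u\|_{S^{s_k}_{(T^*-\eta,T^*)}}\to 0$). Re-solving from time $T^*-\eta$ then produces a solution on $(T^*-\eta,\,T^*-\eta+\tau)$ with $T^*-\eta+\tau>T^*$, contradicting maximality. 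Hence $\|u\|_{S^s_{T^*}}=+\infty$. The critical case (item 2) is the same argument run at regularity $s_k$, and the inhomogeneous case is identical with $H^s_x$ in place of $\dot H^s_x$.

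\medskip

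Next I would establish the blow-up rate $\|u(t)\|_{\dot H^s_x}\gtrsim (T^*-t)^{-(s-s_k)/(\alpha+1)}$ in the subcritical homogeneous case. The idea is a rescaling/continuation argument. Fix $t$ close to $T^*$ and apply the local theory from initial time $t$: Theorem \ref{teo:subcritical_lwp} gives a lifespan $\tau=\tau(\|u(t)\|_{\dot H^s_x})>0$, and by maximality $t+\tau\le T^*$, i.e. $\tau\le T^*-t$. It therefore suffices to show that the lifespan function satisfies $\tau(M)\gtrsim M^{-(\alpha+1)/(s-s_k)}$, equivalently $M\lesssim \tau^{-(s-s_k)/(\alpha+1)}$. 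This is where the scaling invariance $u_\lambda(t,x)=\lambda^{-\alpha/k}u(\lambda^{-(\alpha+1)}t,\lambda^{-1}x)$ enters: under this scaling $\|u_\lambda(0)\|_{\dot H^{s_k}_x}=\|u(0)\|_{\dot H^{s_k}_x}$ while $\|u_\lambda(0)\|_{\dot H^s_x}=\lambda^{-(s-s_k)}\|u(0)\|_{\dot H^s_x}$, and a lifespan $\tau$ for $u$ corresponds to lifespan $\lambda^{\alpha+1}\tau$ for $u_\lambda$. Choosing $\lambda$ so that $u_\lambda(0)$ has $\dot H^s_x$-norm equal to a fixed small constant (which, by the subcritical local theory with $\dot H^{s_k}_x$-norm bounded along the flow by conservation of mass and the $S^{s_k}$-continuation, gives a \emph{uniform} lifespan $\tau_0>0$ independent of $t$), one reads off $\lambda^{\alpha+1}\tau\gtrsim \tau_0$ with $\lambda\sim \|u(t)\|_{\dot H^s_x}^{1/(s-s_k)}$, which rearranges to exactly the claimed lower bound on $\|u(t)\|_{\dot H^s_x}$. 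The analogous statement for $s\ge s_k$ in the inhomogeneous setting follows in the same way, replacing homogeneous norms by inhomogeneous ones and using that the subcritical inhomogeneous theory of Theorem \ref{teo:subcritical_lwp} has a lifespan controlled from below by $\|u(t)\|_{\dot H^s_x}$.

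\medskip

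The main obstacle I anticipate is making the continuation step fully rigorous, namely showing that a finite $S^s_{T^*}$-norm really does upgrade to a finite $\dot X^s_{T^*}$-norm and to convergence of $u(t)$ in $\dot H^s_x$ as $t\uparrow T^*$, and that the free-evolution smallness $\|V_\alpha(\cdot)u(T^*-\eta)\|_{S^{s_k}}$ on a short interval can be controlled uniformly in $\eta$. This requires carefully revisiting the multilinear/Strichartz estimates from the proof of the local theorems, partitioning $[0,T^*)$ into finitely many subintervals on which the $S^{s_k}$-norm is small, summing up the contributions, and checking that no constant degenerates as the interval endpoint approaches $T^*$. Everything else — the $T^*$ set-up and the scaling computation for the rate — is routine once this continuation lemma is in hand; indeed it is essentially a repackaging of the contraction estimates already used to prove Theorems \ref{Teorema existencia critica} and \ref{teo:subcritical_lwp}.
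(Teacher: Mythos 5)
Your proof of the blow-up alternative (the statements $\|u\|_{S^s_{T^*}}=+\infty$ and $\|u\|_{S^{s_k}_{T^*}}=+\infty$) follows essentially the paper's route: contraposition, upgrading finiteness of the $S$-norm to finiteness of the $\dot X$-norm by an absorption argument on a short terminal interval where the relevant nonlinear factor is small, deducing that $u(t)$ converges in $\dot H^s_x$ as $t\nearrow T^*$, and restarting the local theory to contradict maximality. For the blow-up rate, however, you take a genuinely different path. The paper argues by contradiction: assuming $(T^*-t_n)^{(s-s_k)/(\alpha+1)}\|u(t_n)\|_{\dot H^s_x}\le 1/n$ along a sequence $t_n\nearrow T^*$, it applies the subcritical nonlinear estimate $\|u^{k+1}\|_{\dot N^{s}_T}\lesssim T^{k(s-s_k)/(\alpha+1)}\|u\|^k_{S^s_T}\|u\|_{\dot X^s_T}$ on $[t_n,T^*)$, absorbs the nonlinear term for $n$ large, and concludes that $\|u\|_{S^s([t_{n_0},T^*))}$ stays finite, contradicting part 1. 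You instead use the quantitative lifespan lower bound implicit in Theorem \ref{teo:subcritical_lwp} (namely $T\gtrsim\|u_0\|_{\dot H^s_x}^{-(\alpha+1)/(s-s_k)}$, which you rederive via the scaling symmetry) together with maximality of $T^*$; this is the standard and arguably more direct route, and it gives the pointwise bound for every $t<T^*$ rather than ruling out failure along a sequence. Both arguments are correct and rest on the same nonlinear estimate. One small blemish: your parenthetical appeal to conservation of mass to bound the $\dot H^{s_k}_x$-norm along the flow is neither correct (mass controls the $L^2_x$-norm, and here $s_k\ge 0$ with equality only for $(k,\alpha)=(4,2)$) nor needed, since the lifespan in Theorem \ref{teo:subcritical_lwp} depends only on $\|u_0\|_{\dot H^s_x}$ and the scaling computation closes without any control at the critical regularity.
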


\medskip

\begin{remark}
It is important to emphasize that the results presented so far can be analogously extended to the complex-valued case
  \begin{equation*}\label{complex-DGBO}
	\left\{
	\begin{array}{l}
	u_t + D_x^\alpha u_x + |u|^ku_x= 0, \quad (t,x)  \in \R \times \R,\\
		u(0,x)=u_0(x),
	\end{array}
	\right.
\end{equation*}
with $u=u(t,x) \in \mathbb{C}$ and $\alpha \in (1,2]$.
\end{remark}

Finally, we focus on the case $k=3$. In this situation, we obtain the following local well-posedness result.

%{\color{red}Definir soluÃ§Ã£o nesse contexto}

\begin{theorem}[Local well-posedness for $k=3$]\label{Teorema do XSB da introdução}
Let $\alpha \in (1,2]$, $k=3$, and $s>s_3 = \frac{1}{2}-\frac{\alpha}{3}$. Given $u_0 \in H^s_x(\R)$, there exist $T=T\left(\|u_0\|_{H^s_x}\right)>0$ and a unique solution $u \in C_TH^s_x$ to the problem \eqref{DGBO}.  Furthermore, the data-solution map $ u_0 \mapsto u \in  C_TH^s_x$ is locally uniformly continuous.
\end{theorem}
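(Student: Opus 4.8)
The plan is to prove Theorem~\ref{Teorema do XSB da introdução} by a contraction argument in a Bourgain space $X^{s,b}$ adapted to the free evolution $V_\alpha(t)$, that is, with norm
\begin{equation*}
\|u\|_{X^{s,b}}:=\big\|\langle\xi\rangle^{s}\,\langle\tau-\xi|\xi|^\alpha\rangle^{b}\,\widehat{u}(\tau,\xi)\big\|_{L^2_{\tau,\xi}(\R^2)}
\end{equation*}
(with the sign convention matching $V_\alpha$), and its time-restricted version $X^{s,b}_T$ on $[0,T]\times\R$. Since $\mu u^3u_x=\tfrac{\mu}{4}\partial_x(u^4)$, Duhamel's formula recasts \eqref{DGBO} as the fixed-point equation $u=\Phi(u)$ with $\Phi(u)=\psi(t)\,V_\alpha(t)u_0-\tfrac{\mu}{4}\,\psi_T(t)\int_0^tV_\alpha(t-t')\,\partial_x(u^4)(t')\,dt'$, where $\psi,\psi_T$ are the usual time cut-offs. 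Fixing $b=\tfrac12+\eps$ and $b'=-\tfrac12+2\eps$ for $\eps>0$ small, the classical linear and inhomogeneous estimates in Bourgain spaces give $\|\psi V_\alpha(\cdot)u_0\|_{X^{s,b}}\lesssim\|u_0\|_{H^s_x}$ and $\|\psi_T\int_0^tV_\alpha(t-t')F\,dt'\|_{X^{s,b}}\lesssim T^{\eps}\|F\|_{X^{s,b'}}$, so the whole problem reduces to the quadrilinear estimate
\begin{equation}\label{plan:quartic}
\big\|\partial_x(v_1v_2v_3v_4)\big\|_{X^{s,b'}}\lesssim\prod_{j=1}^{4}\|v_j\|_{X^{s,b}},\qquad s>s_3=\tfrac12-\tfrac{\alpha}{3}.
\end{equation}

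To establish \eqref{plan:quartic} I would dualize, pass to the Fourier side, and decompose $v_j=P_{N_j}v_j$ into Littlewood--Paley pieces, ordered $N_1\ge N_2\ge N_3\ge N_4$, together with dyadic blocks $L,L_1,\dots,L_4$ for the modulation weights $\langle\tau-\xi|\xi|^\alpha\rangle$ and $\langle\tau_j-\xi_j|\xi_j|^\alpha\rangle$; the output frequency $\xi=\xi_1+\dots+\xi_4$ carries the extra factor $|\xi|$ coming from $\partial_x$, which is already favourable in the high$\times$high cancellation regime $|\xi|\ll N_1$. On the support of the multiplier one has $\max(L,L_1,\dots,L_4)\gtrsim|\Omega|$, where $\Omega=\omega(\xi_1+\dots+\xi_4)-\sum_j\omega(\xi_j)$ and $\omega(\xi)=\xi|\xi|^\alpha$; the heart of the matter is the quantitative behaviour of $\Omega$, using $\omega'(\xi)=(\alpha+1)|\xi|^\alpha$ and $\omega''(\xi)=\alpha(\alpha+1)\,\xi|\xi|^{\alpha-2}$, which yields lower bounds like $|\Omega|\gtrsim N_1^{\alpha+1}$ when $N_1\gg N_2$ and $|\Omega|\gtrsim N_1^{\alpha-1}|\xi|^2$ in high$\times$high interactions. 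Large modulations are thereby exchanged for derivative gains; when instead all of $L,L_1,\dots,L_4$ are small, the frequencies are confined to a thin neighbourhood of $\{\Omega=0\}$, and here one needs the \emph{frequency-restricted estimates}: sharp bounds for the Lebesgue measure, and the associated $\ell^2$-counting, of the level sets $\{(\xi_1,\dots,\xi_4):|\Omega|\le A\}$ intersected with the dyadic frequency boxes. Combining these with the bilinear and $L^4/L^6$ Strichartz, local smoothing and maximal-function estimates available for $V_\alpha$ when $\alpha>1$, each dyadic configuration is bounded by the product on the right of \eqref{plan:quartic} times a positive power of $N_2/N_1$ (or of $N_4$); the slack $s-s_3>0$, together with $b-\tfrac12>0$ and $b'+\tfrac12>0$ for the modulation sums, makes the summation over $N_1,\dots,N_4$ and $L,\dots,L_4$ converge.

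Granting \eqref{plan:quartic}, $\Phi$ is a contraction on a ball of $X^{s,b}_T$ as soon as $T=T(\|u_0\|_{H^s_x})$ is small, producing the unique fixed point $u\in X^{s,b}_T\hookrightarrow C_TH^s_x$; uniqueness in the resolution space is immediate from the contraction, the stated uniqueness in $C_TH^s_x$ follows from a standard auxiliary argument, and the Lipschitz (hence locally uniformly continuous) dependence of $u$ on $u_0$ is read off from the difference estimate underlying the fixed point. I expect the decisive difficulty to be exactly the sharp control of the resonance function $\Omega$ and the attendant frequency-restricted estimates: since $\xi|\xi|^\alpha$ is only $C^{1,\alpha-1}$ at the origin, the convexity/mean-value arguments must be run with care near $\xi=0$, and the high$\times$high$\to$low interactions --- where, when $\alpha$ is close to $2$ and $s_3<0$, the output weight $\langle\xi\rangle^{s}$ is \emph{larger} than $\prod_j\langle\xi_j\rangle^{s}$ --- are the ones pinning down the threshold $s_3$ and must be absorbed by the strongest smoothing at hand. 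Because we only claim $s>s_3$ strictly, the surplus $s-s_3$ will absorb all borderline losses, so no Besov-type refinement of $X^{s,b}$ at $b=\tfrac12$ should be required.
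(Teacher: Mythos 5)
Your overall architecture coincides with the paper's: a contraction for the cut-off Duhamel operator in $X^{s,\frac12^+}_\alpha$, the standard linear and $T^{\eps}$-gain estimates, and the reduction of the whole theorem to the quadrilinear bound $\|\partial_x(u^4)\|_{X^{s,b-1}_\alpha}\lesssim\|u\|_{X^{s,b}_\alpha}^4$, i.e.\ \eqref{estimativa multilinear bourgain}. The paper also attacks that estimate through level-set information on the resonance function $\Phi=\xi|\xi|^\alpha-\sum_j\xi_j|\xi_j|^\alpha$, exactly the ``frequency-restricted estimates'' you single out as decisive. Where you diverge is in execution: you propose a full Littlewood--Paley and modulation decomposition combined with bilinear Strichartz, local smoothing and maximal-function estimates, whereas the paper avoids dyadic decomposition altogether by invoking the Correia--Oliveira--Silva reduction (Lemmas \ref{Lema 1} and \ref{Lema 2}), which converts \eqref{estimativa multilinear bourgain} into sup-in-$\xi$ bounds of the form $\int \M\,\1_{|\Phi-\beta|<M}\lesssim M^{1^-}$, evaluated by the change of variables $\xi_1\mapsto\Phi$ in non-stationary regions and by Morse's lemma together with Lemma \ref{lema 5} near the stationary points of the rescaled phase. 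Both routes rest on the same resonance analysis, and the surplus $s-s_3>0$ plays the same role in each.

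That said, your proposal stops where the real work begins: the quadrilinear estimate, which is the entire content of the paper's Section \ref{seção caso k=3}, is only outlined, and one heuristic you lean on is false. In the regime $N_1\gg N_2\ge N_3\ge N_4$ one only has $|\Omega|\lesssim N_1^{\alpha}N_2$, and $\Omega$ can vanish identically there (take $\xi_2=-\xi_3$ and $\xi_4=0$, so that $\xi=\xi_1$ and the odd symbol $\xi|\xi|^\alpha$ cancels pairwise); hence the claimed gain $|\Omega|\gtrsim N_1^{\alpha+1}$ in the high--low interaction is not available, and these genuinely resonant configurations must be controlled by measure bounds on $\{|\Phi-\beta|<M\}$ rather than by modulation gains. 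Nothing in your plan precludes completing the argument along either route, but as written it does not yet contain a proof of the only nontrivial step.
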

\begin{remark}
If we set $\alpha=2$ and $k=3$ in \eqref{DGBO}, Theorem \ref{Teorema do XSB da introdução} recovers the well-posedness result obtained by Gr\"unrock in \cite{MR2174975}.
\end{remark}

\begin{remark}
Farah, Linares, and Pastor established local well-posedness for \eqref{DGBO} in $H_x^{\frac{\alpha}{2}}(\mathbb{R})$ for $\alpha \in (1,2)$ and $k > 2\alpha$ (see \cite{MR3200754}, Theorem 2.3). The Theorems \ref{teo:subcritical_lwp} and \ref{Teorema do XSB da introdução}, however, not only significantly improve upon this local well-posedness result for $s\geq s_k$ but also extend the analysis to include the case $\alpha = 2$.
\end{remark}

In \cite{restrictionmethod}, Correia, Oliveira, and Silva, propose the following conjecture:

\textbf{Conjecture:}\textit{(\hspace{-0.13cm} \cite{restrictionmethod}, Conjecture 1). Given a nonlinear dispersive equation with dispersion of order $l$ and a nonlinearity of order $k \geq  3$ with a loss of $m$ derivatives, let $s_0$ be such that, for $s>s_0$, the $H^s_x$-flow exists and is analytic. Then the flow exhibits a nonlinear smoothing effect of order $\varepsilon  < \min\{ (k-1)(s-s_0), l-m-1\}$.}

Thus, the following result lends support to this conjecture in this specific case.

\begin{theorem}[Nonlinear smoothing effect for $k=3$]\label{efeito regularizante nao linear}
Let $\alpha \in (1,2]$, $k=3$, and $u_0 \in H^s_x(\mathbb{R}) $ with $ s > s_3$. The solution $ u \in C_T H^s_x $ of  \eqref{DGBO}, provided by Theorem \ref{Teorema do XSB da introdução}, satisfies
$$
u(t) - V_\alpha(t)u_0 =\mu \int_0^t V_\alpha(t-\tau)(u^3\partial_xu)(\tau)\, d\tau \in H^{s+\varepsilon}_x(\R),
$$
for all $\varepsilon \in \left[0, \min \left\{3s+\alpha - \frac{3}{2}, s+1/2, \alpha-1\right\}\right)$.
\end{theorem}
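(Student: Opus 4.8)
The plan is to prove the nonlinear smoothing estimate by running the contraction mapping argument from Theorem~\ref{Teorema do XSB da introdução} in a space that measures a slightly higher regularity for the Duhamel term, and then quantifying precisely how much extra smoothing the Duhamel integral $\mu\int_0^t V_\alpha(t-\tau)(u^3\partial_x u)(\tau)\,d\tau$ enjoys relative to the free evolution $V_\alpha(t)u_0$. Concretely, I would work in a Fourier-restriction ($X^{s,b}$-type) framework adapted to the dispersion $|\xi|^\alpha\xi$, exactly the one underlying the local theory, where $u_0\in H^s_x$ gives $u\in X^{s,b}_T$ for suitable $b>1/2$ and the resolution of the IVP is the fixed point of $u\mapsto V_\alpha(t)u_0 + \mu\int_0^t V_\alpha(t-\tau)(u^3\partial_x u)\,d\tau$. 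The key claim to establish is the bilinear/multilinear estimate
$$
\norm{\int_0^t V_\alpha(t-\tau)(u^3\partial_x u)(\tau)\,d\tau}_{X^{s+\varepsilon,b}_T} \lesssim T^{\theta}\,\norm{u}_{X^{s,b}_T}^4,
$$
valid for every $\varepsilon$ in the asserted range, which then immediately yields $u(t)-V_\alpha(t)u_0\in C_T H^{s+\varepsilon}_x$ by the standard embedding $X^{s+\varepsilon,b}_T\hookrightarrow C_TH^{s+\varepsilon}_x$ for $b>1/2$.

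The heart of the matter is the frequency-restricted estimate for the quartic interaction. I would decompose dyadically, writing the four input frequencies as $\xi_1,\xi_2,\xi_3,\xi_4$ with output frequency $\xi=\xi_1+\xi_2+\xi_3+\xi_4$, and exploit the resonance function
$$
\Phi = |\xi|^{\alpha}\xi - \sum_{j=1}^4 |\xi_j|^{\alpha}\xi_j,
$$
whose size governs how much of the modulation weight in $X^{s,b}$ can be traded for spatial derivatives. On the region where $\abs{\Phi}$ is comparable to $\abs{\xi_{\max}}^{\alpha+1}$ one gains up to $\alpha-1$ derivatives after accounting for the one derivative in $\partial_x u$; this is the source of the $\alpha-1$ term in the minimum. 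On the near-resonant region, where $\abs{\Phi}$ is small, one must instead use the available room in the subcritical index $s>s_3$: each of the extra factors of $u$ beyond what is strictly needed by scaling can absorb $s-s_3$ derivatives, and since the nonlinearity is quartic one gets a $3(s-s_3)=3s+\alpha-\tfrac32$ gain — this explains the first term in the minimum. The $s+1/2$ term reflects the ceiling coming from the regularity $s$ of the data itself together with the half-derivative smoothing intrinsic to the linear flow (the same gain that appears in the $\dot X^s$ resolution space $\dot W^{s+1/2,\cdot}_x L^{\cdot}_t$), i.e. one cannot output more than $s+1/2$ derivatives from a single $H^s$ factor carrying the worst frequency. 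Combining the three competing bounds via Hölder in the modulation variables and summing the dyadic pieces (which converges precisely because $\varepsilon$ is taken strictly below the minimum) gives the claimed estimate.

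I expect the main obstacle to be the careful bookkeeping in the case analysis over the relative sizes of $\xi_1,\dots,\xi_4$ and $\Phi$, in particular the \emph{high-low} interactions where one frequency dominates: there one cannot rely on a favorable lower bound for $\abs{\Phi}$, and must instead run a bilinear Strichartz / local smoothing argument (of Kenig--Ponce--Vega type, adapted to $\alpha\in(1,2]$) to recover derivatives, which is where the restriction $\varepsilon<\alpha-1$ becomes sharp. A secondary technical point is handling the endpoint-in-$b$ issues and the factor $T^\theta$ so that the smoothing estimate closes on the same time interval as the local theory; this is routine once the multilinear estimate is in hand, using the time-localization lemmas for $X^{s,b}$ spaces. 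Finally, I would remark that since the argument only uses multilinear estimates already implicit in the proof of Theorem~\ref{Teorema do XSB da introdução}, no new function spaces are needed beyond a shift of the target index, and the full range of $\varepsilon$ is obtained by interpolating the three extremal estimates described above.
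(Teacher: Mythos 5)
Your proposal follows essentially the same route as the paper: the theorem is reduced to the quadrilinear estimate $\|\partial_x(u_1u_2u_3u_4)\|_{X^{s+\varepsilon,-\frac{1}{2}^-}_\alpha}\lesssim\prod_{j=1}^4\|u_j\|_{X^{s,\frac{1}{2}^+}_\alpha}$, obtained by rerunning the resonance-function analysis of Theorem \ref{Teorema em XSB} with target regularity $s'=s+\varepsilon$, after which the conclusion follows from the Duhamel formula and the embedding $X^{s+\varepsilon,\frac{1}{2}^+}_\alpha\hookrightarrow C_TH^{s+\varepsilon}_x$. The only real difference is cosmetic: the paper implements the frequency case analysis via the Correia--Oliveira--Silva frequency-restricted estimates (Lemmas \ref{Lema 1} and \ref{Lema 2}, the change of variables $\xi_1\mapsto\Phi$, and Morse's lemma) rather than dyadic decomposition with bilinear Strichartz input, and your identification of the origin of the three constraints on $\varepsilon$ is consistent with the computations there.
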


\begin{remark}
When considering $k=3$ and $\alpha = \frac{3}{2}$, equation \eqref{DGBO} becomes 
$$
u_t + D_x^{3/2} u_x + \mu \, u^3u_x= 0
$$
which is critical in $L^2_x(\R)$; in other words, $s_3 = 0$. Consequently, Theorem \ref{Teorema do XSB da introdução} improves the local well-posedness in \cite{MR2859931} for this case. Furthermore, Theorem \ref{efeito regularizante nao linear} introduces a new nonlinear smoothing effect in this scenario.
\end{remark}

Since solutions of problem \eqref{DGBO} satisfy, formally, the conservation of mass
$$
M(u(t))=M(u_0) \quad \forall \, t \in [0,T],
$$
and considering that $s_3< 0$ when $\alpha > \frac{3}{2}$, a straightforward consequence of Theorem \ref{Teorema do XSB da introdução} is:

\begin{corollary}\label{Teorema do XSB da introdução GLOBAL}
Let $\alpha\in (\frac{3}{2},2]$ and $k=3$. If $u_0 \in L^2_x(\mathbb R)$, then the local solutions given in 
Theorem \ref{Teorema do XSB da introdução} can be extended to any time-interval $[0, T]$ for arbitrary $T>0$ in $C_TL^2_x(\R)$.
\end{corollary}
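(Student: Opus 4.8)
\textit{Proof proposal.} The plan is to combine the subcritical local theory of Theorem~\ref{Teorema do XSB da introdução} at regularity $s=0$ with the conservation of mass, via the standard continuation (iteration) scheme. First, since $\alpha>\tfrac32$ we have $s_3=\tfrac12-\tfrac{\alpha}{3}<0$, so $L^2_x(\R)=H^0_x(\R)$ lies in the subcritical range $s>s_3$. Hence Theorem~\ref{Teorema do XSB da introdução}, applied with $s=0$, provides for every $v\in L^2_x(\R)$ a unique solution of \eqref{DGBO} on an interval $[0,T_0]$ with $T_0=T_0(\norm{v}_{L^2_x})>0$; inspecting the fixed-point construction, $T_0(\cdot)$ may be taken positive and non-increasing, and the data-to-solution map is (locally uniformly) continuous into $C_{T_0}L^2_x$. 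The crucial feature is that this local existence time depends \emph{only} on the $L^2_x$-norm of the datum.

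Next I would upgrade the formal identity $M(u(t))=M(u_0)$ to a rigorous statement for these $L^2$ solutions by a regularization argument. Fix $u_0\in L^2_x(\R)$ and take $u_{0,n}\in\bigcap_{m\ge 0}H^m_x(\R)$ with $u_{0,n}\to u_0$ in $L^2_x$. By Theorem~\ref{Teorema do XSB da introdução} applied across the whole subcritical scale together with uniqueness (all solutions agree on the common interval of existence since $H^m_x\hookrightarrow L^2_x$), and by standard persistence-of-regularity arguments for \eqref{DGBO} in the spirit of Proposition~\ref{teo:propagation_regularity}, the corresponding solution $u_n$ belongs to $C([0,\tau];H^m_x(\R))$ for every $m$, on a common interval $[0,\tau]$ with $\tau=\tau(\norm{u_0}_{L^2_x})>0$ independent of $n$ (using $\norm{u_{0,n}}_{L^2_x}\le 2\norm{u_0}_{L^2_x}$ for $n$ large and the monotonicity of $T_0$). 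For such smooth, decaying solutions one may pair the equation with $u_n$ and integrate by parts: the dispersive term drops because $D_x^\alpha\partial_x$ is skew-adjoint (its Fourier symbol $i\xi|\xi|^\alpha$ is purely imaginary), and the nonlinear term vanishes since $\int_\R u_n^{k+1}\partial_x u_n\,dx=\frac{1}{k+2}\int_\R\partial_x(u_n^{k+2})\,dx=0$. This yields $\frac{d}{dt}\norm{u_n(t)}_{L^2_x}^2=0$, hence $\norm{u_n(t)}_{L^2_x}=\norm{u_{0,n}}_{L^2_x}$ on $[0,\tau]$. Passing to the limit via continuous dependence ($u_n\to u$ in $C([0,\tau];L^2_x)$) gives $\norm{u(t)}_{L^2_x}=\norm{u_0}_{L^2_x}$ for $t\in[0,\tau]$; applying this on consecutive subintervals propagates $M(u(t))=M(u_0)$ throughout the maximal interval of existence of the $L^2$ solution.

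Finally I would run the continuation. Fix $T>0$ and set $T_*:=T_0(\norm{u_0}_{L^2_x})>0$. Solve \eqref{DGBO} on $[0,T_*]$; by the conservation of mass just established, $\norm{u(T_*)}_{L^2_x}=\norm{u_0}_{L^2_x}$, so Theorem~\ref{Teorema do XSB da introdução} with initial datum $u(T_*)$ extends the solution to $[T_*,2T_*]$ with the \emph{same} time step $T_*$, and the two pieces agree on the overlap by local uniqueness. Iterating, the solution extends to $[0,nT_*]$ for every $n\in\N$; choosing $n$ with $nT_*\ge T$ produces a solution $u\in C([0,T];L^2_x(\R))$, whose uniqueness on $[0,T]$ follows by patching the local uniqueness statements across the subintervals $[jT_*,(j+1)T_*]$. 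Since $T>0$ was arbitrary, this proves the corollary. The genuinely delicate point is the rigorous justification of conservation of mass at the $L^2$ level, i.e.\ the regularization step and the accompanying persistence of regularity; once that is in place, global existence is automatic precisely because the local existence time is governed solely by the conserved quantity $\norm{u(t)}_{L^2_x}$.
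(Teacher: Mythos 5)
Your proposal is correct and follows essentially the same route as the paper: since $s_3<0$ for $\alpha>\tfrac32$, the local theory of Theorem~\ref{Teorema do XSB da introdução} applies at $s=0$ with an existence time depending only on $\|u_0\|_{L^2_x}$, and conservation of mass then allows the solution to be iterated to any time interval. The only difference is that you spell out the regularization argument justifying $M(u(t))=M(u_0)$ at the $L^2$ level, whereas the paper invokes the conservation law only formally and leaves the continuation argument implicit.
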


By applying the first iteration of the I-method, first developed by Colliander, Keel, Staffilani, Takaoka, and Tao in \cite{MR1824796} the global existence result provided by Corollary \ref{Teorema do XSB da introdução GLOBAL} can be extended for negative regularity. Indeed, we have the following result. 

%{-  \cite{MR1871414}- \cite{MR1950826}-\cite{colliander2003sharp}}

\begin{theorem}[Global well-posedness for $k=3$]\label{BOA COLOCAÇAO GLOBAL I METHOD}
Let $\alpha\in (\frac{3}{2},2]$, $k=3$ and $u_0 \in H^s_x(\R)$ with  $ s_3 <-\frac{(2\alpha -3)^2}{24\alpha - 6}<s \leq 0$. Then, the solution of \eqref{DGBO} with initial data $u_0$  can be extended to any interval of time $[0, T]$ for arbitrary $T>0$. Moreover, the solution $u$ satisfies 
\begin{align*}
\sup_{t \in [0,T]}\|u(t)\|_{H^s_x} \lesssim \left(1+ T \right )^{\rho}  \|u_0\|_{L^2_x},
\end{align*}
for any 
$$ \rho\geq \frac{2s}{(2\alpha-3)^2(3-2\alpha-6s)-12(2\alpha-3)(\alpha+1)s}>0.
$$
\end{theorem}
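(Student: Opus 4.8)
The plan is to run the first generation of the I-method of Colliander–Keel–Staffilani–Takaoka–Tao. Fix $s$ in the stated range with $s\le 0$. For a large parameter $N\gg1$, introduce the Fourier multiplier operator $I=I_{N,s}$ with smooth symbol $m(\xi)$ equal to $1$ for $|\xi|\le N$ and equal to $(|\xi|/N)^{s}$ for $|\xi|\ge 2N$, chosen so that $I:H^s_x\to L^2_x$ and $\|Iu_0\|_{L^2_x}\lesssim N^{-s}\|u_0\|_{H^s_x}$. The idea is that $I$ interpolates between the identity at low frequencies (where mass control is already available by Corollary \ref{Teorema do XSB da introdução GLOBAL}) and a smoothing operator at high frequencies. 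First I would establish a \emph{modified local well-posedness} statement: the IVP \eqref{DGBO} (with $k=3$) is locally well-posed on a time interval of length $\delta=\delta(\|Iu_0\|_{L^2_x})$ depending only on the $L^2_x$-norm of $Iu_0$, and on that interval $\|Iu\|_{L^\infty_tL^2_x}+\text{(auxiliary norms)}\lesssim\|Iu_0\|_{L^2_x}$. This is obtained by rerunning the fixed-point argument behind Theorem \ref{Teorema do XSB da introdução} with $I$ applied to the equation, using that $I$ commutes with the linear flow $V_\alpha$ and that the relevant bilinear/multilinear estimates survive composition with $I$ (since $m$ is a nonincreasing function of $|\xi|$, no derivatives are lost); this is where the frequency-restricted / $X^{s,b}$-type estimates used in the proof of Theorem \ref{Teorema do XSB da introdução} get reused in the $L^2$-based space $I\!\cdot\!X^{0,b}$.

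The heart of the argument is the \emph{almost-conservation law} for $E_I(u(t)):=\tfrac12\|I u(t)\|_{L^2_x}^2$ (the modified mass; note the genuine conserved quantity here is the mass, so we only need to control the error coming from the commutator $[I,u^3\partial_x]$). Differentiating in time and using the equation, one gets
$$
E_I(u(\delta)) - E_I(u(0)) = \mu\int_0^\delta \int_{\R} \big( I(u^3\partial_x u) - (Iu)^3\partial_x(Iu)\big)\,Iu\,dx\,dt,
$$
and after symmetrization the integrand is a multilinear expression in $\widehat{Iu}$ supported on the locus $\xi_1+\cdots+\xi_5=0$ with a multiplier that vanishes unless some frequency is $\gtrsim N$, gaining a negative power of $N$. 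The plan is to bound this error by $N^{-\beta}\big(\|Iu\|_{X^{0,b}([0,\delta])}\big)^{5}$ for a suitable $\beta>0$ determined by $s$ and $\alpha$; combined with the local bound $\|Iu\|_{X^{0,b}([0,\delta])}\lesssim\|Iu_0\|_{L^2_x}$ this yields $|E_I(u(\delta))-E_I(u(0))|\lesssim N^{-\beta}$ per step, uniformly over steps as long as $E_I$ stays bounded by, say, $2E_I(u_0)$. Iterating over $\sim T/\delta$ steps and choosing $N=N(T)$ large enough that $N^{-\beta}\cdot (T/\delta)\ll E_I(u_0)$ keeps $E_I$ comparable to its initial value on $[0,T]$; unwinding gives $\sup_{t\in[0,T]}\|u(t)\|_{H^s_x}\lesssim N^{s}\|Iu(t)\|_{L^2_x}\lesssim N^{s}\|Iu_0\|_{L^2_x}\lesssim N^{-s}\cdot N^{s}\|u_0\|_{H^s_x}$, wait—more carefully, one tracks the bookkeeping $\|u\|_{H^s_x}\lesssim\|Iu\|_{L^2_x}$ and $\|Iu_0\|_{L^2_x}\lesssim N^{-s}\|u_0\|_{H^s_x}$, and optimizing the relation between $N$, $T$, and the admissible $\beta$ produces the stated polynomial bound $(1+T)^\rho\|u_0\|_{L^2_x}$ with $\rho$ in the indicated range. (The threshold $s_3<-\tfrac{(2\alpha-3)^2}{24\alpha-6}<s$ is exactly the condition under which $\beta>0$ can be arranged so that $N^{-\beta}(T/\delta)$ can be made small.)

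The main obstacle is the multilinear \emph{commutator estimate} giving the gain $N^{-\beta}$ with a quantitative, $s$-dependent $\beta$. One must decompose $\widehat{Iu}$ dyadically in frequency, use that $m(\xi_1+\cdots)$ versus $\prod m(\xi_i)$ differences force at least two comparable large frequencies, apply a mean value / Coifman–Meyer type bound on the symbol, and then close with the same Strichartz / $X^{0,b}$ bilinear refinements (exploiting the order-$\alpha$ dispersion, $\alpha>3/2$, which is what makes $s_3<0$) that underlie Theorem \ref{Teorema do XSB da introdução}; the exponent $\beta$ comes out of counting derivative gains against the number of large-frequency factors, and matching it to the arithmetic constraint on $\rho$ is the delicate computation. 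A secondary technical point is that, since we work below $L^2_x$, one cannot literally invoke mass conservation for $u$ itself; the role of $E_I$ as a \emph{near}-conserved surrogate for the mass, together with the equivalence $\|u\|_{H^s_x}\simeq\|Iu\|_{L^2_x}$ on the relevant frequency-localized pieces, must be set up carefully so the iteration does not accumulate uncontrolled constants.
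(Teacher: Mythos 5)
Your plan is correct in outline and follows the same overall strategy as the paper (first iteration of the I-method: modified local well-posedness at the $L^2$ level of $I_Nu$, an almost-conservation law for $\|I_Nu\|_{L^2_x}^2$ with a negative power of $N$, and iteration), but the two key steps are executed differently in the paper. First, for the almost-conservation law the paper does \emph{not} perform the CKSTT-style symmetrization and pointwise mean-value bound on the symbol difference $m(\xi_1+\dots+\xi_4)-\prod m(\xi_j)$ that you propose; it only uses the trivial cancellation (the difference vanishes when all $|\xi_j|\le N$, forcing at least one large frequency) and then estimates $\partial_x I_N(u^4)$ and $\partial_x(I_Nu)^4$ \emph{separately}, feeding the large-frequency factors into the already-established frequency-restricted multilinear estimate \eqref{estimativa multilinear com epsilon} with regularity $-\varepsilon/k$ distributed over the $k$ large factors. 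The gain $N^{-(\alpha-3/2)^-}$ thus comes entirely from the nonlinear smoothing exponent $\min\{3s+\alpha-\tfrac32,\,s+\tfrac12,\,\alpha-1\}$ evaluated near $s=0$, not from commutator cancellation; this recycles the machinery of Theorem \ref{efeito regularizante nao linear} and avoids any Coifman--Meyer-type symbol analysis. Second, the iteration bookkeeping is done via the scaling symmetry: the paper rescales $u_0\mapsto u_0^\lambda$ with $\lambda\sim N^{6s/(3-2\alpha-6s)}$ so that $\|I_Nu_0^\lambda\|_{L^2_x}\lesssim\delta_0\ll1$, iterates the variant local theory on unit time steps up to time $\lambda^{\alpha+1}T$ under the constraint $N^{(\alpha-3/2)^+}\gtrsim\lambda^{\alpha+1}T$ (which is precisely where the threshold $s>-\tfrac{(2\alpha-3)^2}{24\alpha-6}$ and the exponent $\rho$ arise), and then undoes the scaling; your version iterates directly on intervals of length $\delta(\|Iu_0\|_{L^2_x})$ with $N=N(T)$, which can be made to work but would require redoing the arithmetic to recover the stated $\rho$. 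Both routes are legitimate; the paper's buys a shorter proof of the decay in $N$ at the cost of not exploiting the full commutator cancellation (hence a possibly suboptimal power of $N$), while your symmetrized approach could in principle reach lower $s$ but demands a genuinely new multilinear estimate that the paper never proves.
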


\begin{remark}
Theorem \ref{BOA COLOCAÇAO GLOBAL I METHOD}  recovers the result obtained by Grünrock-Panthee-Silva for  $\alpha = 2$ in \cite{MR2372424}.
\end{remark}

In the case $ k=2 $, with $\alpha=2$, Kenig-Ponce-Vega in \cite{MR1211741} established local well-posedness in $H^s_x(\R)$ with $s \geq \frac{1}{4}$. Guo \cite{MR2860610} established local well-posedness for \eqref{DGBOl2} with $ \alpha \in (1,2) $ in $ H^s_x(\R) $ for $ s > \frac{3-\alpha}{4} $. Moreover, these results are sharp in the sense that the data-solution map fail to exhibit $C^3$-continuity below this index, making it impossible to achieve the critical index using the techniques presented in this paper.

To conclude the introduction, we will discuss the solutions to the Cauchy problem for the case $\alpha=1$, \textit{i.e.} the generalized Benjamin-Ono equation 
\begin{equation}\label{GBO}
\left\{
	\begin{array}{l}
	u_t +\mathcal{H}u_{xx} + \mu \, u^ku_x= 0, \quad (t,x)  \in \R \times \R,\\
		u(0,x)=u_0(x)
	\end{array}
	\right.
\end{equation}
Here, $u=u(t,x)$ is real-valued, $k \in \mathbb{Z}^{+}$, $\mu \in \{\pm 1\}$, and $\mathcal{H}$ denotes the Hilbert transform (see \eqref{hilberttranform}). As we can observe, our results do not cover this case. However, for $k \geq 4$, we can refer to the local well-posedness results obtained by Vento \cite{MR2581042} for data of any size in $H_x^s(\mathbb{R})$, $s \geq s_k$, and the global well-posedness proved by Molinet and Ribaud \cite{MR2101982} in the energy space $H_x^{1/2}(\mathbb{R})$ for data of any size if $k \geq 5$, and for small data if $k=3,4$ (see also \cite{MR2174975}). Recently, in \cite{MR4009456}, Kim and Kwon obtained a global result for $k=4$ in $H_x^{s}(\mathbb{R})$, $s \geq \frac{1}{2}$. In addition, they considered the defocusing case, \textit{i.e.}, $\mu > 0$ in \eqref{GBO}, and demonstrated scattering if $k$ is even, in a similar spirit to the work on the defocusing generalized KdV equation by Farah, Linares, Pastor, and Visciglia \cite{MR3772197}. It is worth noting that for $k=3$, Vento \cite{MR2501678} proved that the problem \eqref{GBO} is ill-posed for $s < \frac{1}{3}$, which implies that the critical index $s_3 = \frac{1}{6}$ is not attained. Moreover, local well-posedness in $H_x^s(\mathbb{R})$, $s > \frac{1}{3}$, was established without any restriction in the initial data. Additionally, global well-posedness in the energy space is only available for small data (see \cite{MR2581042}). For the case $k=2$, see \cite{MR2219229} and the references therein.
% \begin{remark}
% In the case $ k=1 $, Molinet-Saut-Tzvetkov in proved \cite{MR1885293} that for $ 1 \leq \alpha < 2 $, the data-solution map fails to be $ C^2 $ from $ H^s_x(\R) $ to $ C_T H^s_x $. This lack of regularity prevents the proof of local well-posedness for \eqref{DGBO} via Picard iteration method. Therefore, the techniques employed in this work are not applicable. {\color{red} I do not understand this remark at this point}
% \end{remark}

% %{\color{blue} Sugestão:

% \begin{remark}
%     It is important to compare the above results with the theory known for the Ben
% \end{remark}

% }

The paper is organized as follows: In Section \ref{Notations and Linear Estimates}, we introduce essential notations and present the linear estimates used in our arguments. In Section 3, we establish local well-posedness for $k \geq 4$ in both the critical and subcritical cases, and derive some of its consequences. Additionally, we present the proof of Theorem \ref{wave operator}, which involves the construction of the wave operator. Finally, in Section \ref{seção caso k=3}, we discuss the case $k=3$, demonstrating both local and global well-posedness, and explore the nonlinear smoothing effects.

%%%%%%%%%%%%%%%%%%%%%%%%%%%%%%%%%%%%%%%%%%%%%%%%%%%%%%%%%%%%
%%%%%%%%%%%%%%%%%%%%%%%%%%%%%%%%%%%%%%%%%%%%%%%%%%%%%%%%%%%%

 \section{Notations and Linear Estimates}\label{Notations and Linear Estimates}
For any two non-negative quantities $X$ and $Y$, the notation $X\lesssim Y$ implies the existence of an absolute constant $C>0$,  independent of $X$ and $Y$, such that $X \leq CY$. We will denote $X\sim Y$ when both $X \lesssim Y$ and $Y \lesssim X$. The notation $X \ll Y$ is used when $X \gtrsim Y$ does not hold. Additionally, if $\kappa \in \R$, then $\kappa^+$ and $\kappa^-$ represent numbers slightly greater and smaller than $\kappa$, respectively.

For $u \in \Sc(\R)$, where $\Sc(\R)$ denotes the Schwartz class, and $s \in \R$ we will define the \textit{Bessel  and Riesz potentials  of order ${\color{red}-}s$}, respectively, by the multipliers
$$
\mathcal{F}_x({J_x^s u})(\xi) := \langle \xi \rangle^s \mathcal{F}_x{u}(\xi) \,\,\, \text{and}\,\,\, \mathcal{F}_x({D_x^s u})(\xi) := |\xi|^s \mathcal{F}_x{u}(\xi)
$$
where $\langle x \rangle : = (1 + |x|^2)^{\frac{1}{2}}$ and $\F_x$ denotes the usual Fourier transform in variable $x$, \textit{i.e.}
$$
\mathcal{F}_x({f})(\xi)= \widehat{f}(\xi):= \int_\R e^{-i x \cdot \xi} f(x)\, dx,
$$
 We will consider $H^s_x(\R)$ the usual Sobolev space, equipped with the norm
$$
\|u\|_{H^s_x}: =\|J^s_x u \|_{L^2_x}.
$$
Similarly, we define the homogeneous Sobolev space $\dot{W}^{s,p}_x(\R)$ with $1 \leq p \leq \infty$ by the norm
\begin{equation}\label{sobolev homogeneo}
\|u\|_{\dot{W}^{s,p}_x}: =\|D^s_x u \|_{L^p_x}.
\end{equation}

When $p=2$, we denote $\dot{W}^{s,p}_x(\R):= \dot{H}^{s}_x(\R)$. Next, we recall the following estimate for fractional derivatives.

\begin{proposition}[\hspace{-0.02cm}\cite{MR1211741}, Theorem A.8]\label{Leibnez Fracionaria}
Let $r \in (0,1)$ and $r_1,r_2 \in [0, r]$ with $r=r_1+r_2$. Let $1<p,q,p_1,q_1,p_2,q_2 <\infty$ with
$$
\frac{1}{p}= \frac{1}{p_1} + \frac{1}{p_2} \,\,\, \text{and}\,\,\, \frac{1}{q}= \frac{1}{q_1} + \frac{1}{q_2}.
$$
Then
\begin{equation*}
 \| D^{r}_x(fg) -fD^{r}_xg- gD^{r}_xf\|_{L^p_xL^q_t} \lesssim \|D^{r_1}_x f\|_{L^{p_1}_xL^{q_1}_t} \|D^{r_2}_xg\|_{L^{p_2}_xL^{q_2}_t}.
\end{equation*}
\end{proposition}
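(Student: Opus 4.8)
The plan is to read the trilinear expression as a bilinear Fourier multiplier in the spatial variable and to prove the corresponding off-diagonal multiplier bound, treating $t$ as a passive parameter throughout. Taking the $x$-Fourier transform at each fixed $t$ (so that hats are transforms in $x$ alone), one has
\begin{equation*}
\mathcal F_x\big[D^r_x(fg)-fD^r_xg-gD^r_xf\big](\xi)=\int_{\R}\big(|\xi|^r-|\xi-\eta|^r-|\eta|^r\big)\,\widehat f(\xi-\eta)\,\widehat g(\eta)\,d\eta,
\end{equation*}
so that, writing $\xi_1,\xi_2$ for the frequencies carried by $f,g$, the operator is the bilinear multiplier $T_m$ with symbol $m(\xi_1,\xi_2)=|\xi_1+\xi_2|^r-|\xi_1|^r-|\xi_2|^r$. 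Since $r=r_1+r_2$, I would factor $m(\xi_1,\xi_2)=|\xi_1|^{r_1}|\xi_2|^{r_2}\,\sigma(\xi_1,\xi_2)$, which places $D^{r_1}_x$ on $f$ and $D^{r_2}_x$ on $g$ and reduces the entire statement to the boundedness
\begin{equation*}
T_\sigma:L^{p_1}_xL^{q_1}_t\times L^{p_2}_xL^{q_2}_t\longrightarrow L^p_xL^q_t,
\end{equation*}
where $\sigma$ is homogeneous of degree $r-r_1-r_2=0$.

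The structural heart of the argument is a Littlewood--Paley/paraproduct decomposition in $x$. Writing $f=\sum_j P_jf$ and $g=\sum_k P_kg$ with $P_j$ localizing to $|\xi|\sim 2^{\,j}$, the product splits into the high--low ($j\gg k$), low--high ($j\ll k$) and high--high ($j\sim k$) regimes. The purpose of subtracting \emph{both} $fD^r_xg$ and $gD^r_xf$ is precisely to kill the dangerous off-diagonal contributions: in the high--low regime the pair $D^r_x(P_jf\,P_kg)-P_kg\,D^r_xP_jf$ is the commutator $[D^r_x,P_kg]P_jf$, in which the derivative falls on the high frequency $|\xi_1|\sim 2^{\,j}$, a region where $|\cdot|^r$ is smooth; the smoothness of $P_kg$ at scale $2^{-k}$ then lets the mean value theorem transfer derivatives and produce a geometric gain in $j-k$, while the leftover term $P_jf\,D^r_xP_kg$ carries an analogous decay and is estimated directly. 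The low--high regime is symmetric, and in the high--high regime no cancellation is needed: one bounds $\|D^r_x(P_jf\,P_kg)\|$ by $2^{\,jr}\|P_jf\,P_kg\|$, writes $2^{\,jr}=2^{\,jr_1}2^{\,kr_2}$ using $j\sim k$, and sums. In each regime the relevant piece of $\sigma$, once localized away from the frequency confluence, satisfies Coifman--Meyer/Mikhlin derivative bounds uniformly in the dyadic scale, so each dyadic block is controlled by the Coifman--Meyer theorem (or by its associated Calderón--Zygmund kernel bounds) and the geometric factors make the sums over $j,k$ converge.

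The mixed-norm $L^p_xL^q_t$ structure is accommodated because $D^r_x$ and all frequency projections act only in $x$. Concretely, I would invoke the vector-valued (Fefferman--Stein) square-function equivalence
\begin{equation*}
\|h\|_{L^p_xL^q_t}\sim\Big\|\big(\textstyle\sum_j|P_jh|^2\big)^{1/2}\Big\|_{L^p_xL^q_t},\qquad 1<p,q<\infty,
\end{equation*}
together with mixed-norm Hölder $\|FG\|_{L^p_xL^q_t}\le\|F\|_{L^{p_1}_xL^{q_1}_t}\|G\|_{L^{p_2}_xL^{q_2}_t}$, which follows from the exponent relations by applying Hölder first in $t$ and then in $x$. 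The hypotheses $1<p,q,p_1,q_1,p_2,q_2<\infty$ are exactly what is needed to run these vector-valued inequalities (valid since $L^q_t$ is a UMD space for $1<q<\infty$) and to apply the bilinear Coifman--Meyer bounds blockwise in mixed norms.

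The main obstacle is the lack of smoothness of the symbols at the origin. The multiplier $m$ is only Hölder, not $C^\infty$, along $\xi_1=0$, $\xi_2=0$ and $\xi_1+\xi_2=0$, and the extracted factors $|\xi_i|^{r_i}$ are themselves non-smooth, so $\sigma$ is a homogeneous degree-zero symbol that is \emph{not} a classical Coifman--Meyer multiplier and the theorem cannot be applied to it directly. The real work is to show that the commutator subtraction cancels exactly the contribution that would require differentiating $|\cdot|^r$ across the origin, while the Littlewood--Paley separation confines every remaining non-smooth factor to a single fixed scale where it acts as a bounded, scale-normalized multiplier; one must then establish the uniform, scale-invariant symbol bounds together with the geometric off-diagonal gains (handling also the endpoint cases $r_1=0$ or $r_2=0$, where one factor disappears) so that the triple sum over dyadic blocks converges. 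Securing these uniform bounds and their summability is the crux of the proof.
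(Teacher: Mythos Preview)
The paper does not actually prove this proposition: it is stated with a reference to \cite{MR1211741}, Theorem~A.8, and closed immediately with a \qed. So there is no ``paper's own proof'' to compare against; the result is simply quoted from Kenig--Ponce--Vega.

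Your outline follows the standard modern route to such commutator/Leibniz estimates---paraproduct decomposition, exploiting the subtraction of both $fD^r_xg$ and $gD^r_xf$ to obtain off-diagonal decay, and treating $t$ as a passive parameter---and is broadly sound. Two remarks on execution. First, the factorization $m(\xi_1,\xi_2)=|\xi_1|^{r_1}|\xi_2|^{r_2}\sigma(\xi_1,\xi_2)$ is more of a heuristic than a step one can invoke directly: $\sigma$ is singular on \emph{all three} axes $\xi_1=0$, $\xi_2=0$, $\xi_1+\xi_2=0$, and you correctly identify that the real work is carried out block by block after Littlewood--Paley localization rather than by a single application of Coifman--Meyer to $\sigma$. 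Second, for the mixed-norm setting you do not need the full UMD machinery: since all Fourier multipliers act in $x$ only, it is cleaner to observe that each dyadic piece is a convolution in $x$ with an $L^1_x$ kernel (with appropriate bounds and decay in the block indices), apply Minkowski in $t$, and then combine via H\"older and the scalar Fefferman--Stein/square-function bounds in $L^p_x(\ell^2)$ applied to $L^q_t$-valued functions. This is essentially how the original argument in \cite{MR1211741} is structured, though that proof works more directly with explicit kernel representations rather than invoking bilinear multiplier theorems.
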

\qed

Let $\{V_{\alpha}(t)\}_{t \in \R}$ be the linear propagator associated to \eqref{DGBO}, \textit{i.e.} for $f \in \Sc(\R)$
\begin{equation}\label{free evolution group}
\left(V_{\alpha}(t)f\right)(x) := c \int_\R e^{i(x\cdot \xi -t |\xi|^\alpha\xi)} \F_x (f) (\xi) \, d\xi= \mathcal{F}^{-1}_x \left({\exp{(-it|\xi|^\alpha\xi)}\widehat{f}(\xi)}\right)(x).  
\end{equation}
That is, $f(t,x):=\left(V_{\alpha}(t)\varphi\right)(x)$ is the solution of the linear problem
    \begin{equation*}
	\left\{
	\begin{array}{l}
	v_t + D_x^\alpha v_x = 0, \quad (t,x)  \in \R \times \R,\\
		v(0,x)=\varphi(x).
	\end{array}
	\right.
\end{equation*}

Now, we present the fundamentals estimates for the operator $V_\alpha(t)$ used in our arguments.
\begin{theorem}\label{full_Strichartz} Consider $4\leq p \leq \infty$, $2\leq q\leq \infty$ with $(p,q) \neq (\infty, \infty)$ such that
$$
\displaystyle\frac{2}{p}+\frac{1}{q} \leq \frac{1}{2}.$$ 
Let $\gamma \in \R$ be such that
\begin{equation*}
    \gamma = \frac{1}{p} + \frac{\alpha+1}{q}-\frac{1}{2},
\end{equation*}
then
\begin{align}
    \label{inverted_order_Strichartz}
    &\|D_x^{\gamma}V_\alpha(t) f\|_{L^p_xL^q_t} \lesssim \|f\|_{L^2_x}
\end{align}
and
\begin{align*}
   & \left\|D_x^{\gamma} \int V_\alpha(-s) g(s) \, ds \right\|_{L^2_x} \lesssim \|g\|_{L^{p'}_xL^{q'}_t}.
\end{align*}
Moreover, if $4\leq p_j \leq \infty$ and $2\leq q_j\leq \infty$ with $(p_j, q_j) \neq (\infty, \infty)$ satisfy 
$$
\displaystyle\frac{2}{p_j}+\frac{1}{q_j} \leq \frac{1}{2} \quad \text{and}\quad\displaystyle\gamma_j = \frac{1}{p_j} + \frac{\alpha+1}{q_j}-\frac{1}{2},
$$
for $j = 1,2$, then
    \begin{equation*}
        \left\|D^{\gamma_1 + \gamma_2}_x \int_{0}^{t} V_\alpha(t-s) g(s) \, ds \right\|_{L^{p_1}_xL^{q_1}_t}
        \lesssim \|f\|_{L^{p_2'}_xL^{q_2'}_t}.
    \end{equation*}
    
\end{theorem}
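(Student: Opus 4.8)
The plan is to establish the Strichartz estimates in Theorem~\ref{full_Strichartz} by combining the dispersive decay of the linear propagator $V_\alpha(t)$ with the abstract $TT^*$ machinery, and then upgrade from the ``diagonal'' pair estimates to the off-diagonal ones via the Christ--Kiselev lemma. First I would record the basic dispersive estimate: since the phase is $t|\xi|^\alpha\xi$, a stationary-phase/Van der Corput analysis (or the known oscillatory integral bounds for the dispersion symbol $|\xi|^\alpha\xi$ with $\alpha\in[1,2]$, as in the Kenig--Ponce--Vega and Molinet--Ribaud circle of ideas) yields a local smoothing/maximal function package together with the fixed-time decay $\|V_\alpha(t)f\|_{L^\infty_x}\lesssim |t|^{-1/2}\|f\|_{L^1_x}$ modulo derivative losses that are exactly encoded by the factor $D_x^\gamma$. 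The cleanest route is to interpolate the conservation law $\|V_\alpha(t)f\|_{L^2_x}=\|f\|_{L^2_x}$ with the pointwise decay, producing a family of space-time estimates; the precise value $\gamma=\frac1p+\frac{\alpha+1}{q}-\frac12$ is dictated by scaling (using the scaling exponent $\alpha+1$ in time coming from $s_k$'s derivation) and must be checked to be consistent on the admissible region $\frac2p+\frac1q\le\frac12$, $4\le p\le\infty$, $2\le q\le\infty$, $(p,q)\ne(\infty,\infty)$.

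Next I would derive \eqref{inverted_order_Strichartz} and its dual simultaneously: by a standard $TT^*$ argument, the estimate $\|D_x^\gamma V_\alpha(t)f\|_{L^p_xL^q_t}\lesssim\|f\|_{L^2_x}$ is equivalent to $\big\|D_x^{2\gamma}\int V_\alpha(t-s)g(s)\,ds\big\|_{L^p_xL^q_t}\lesssim\|g\|_{L^{p'}_xL^{q'}_t}$, and the dual formulation gives $\big\|D_x^\gamma\int V_\alpha(-s)g(s)\,ds\big\|_{L^2_x}\lesssim\|g\|_{L^{p'}_xL^{q'}_t}$ for free. Note that the ``$L^p_xL^q_t$'' ordering (space exponent outside) rather than the usual $L^q_tL^p_x$ is precisely the local-smoothing-type Strichartz estimate natural for these one-dimensional dispersive equations; I would be careful to prove the bilinear form estimate $\big|\iint V_\alpha(t-s)g(s)\overline{h(t)}\,ds\,dt\big|\lesssim\|g\|_{L^{p'}_xL^{q'}_t}\|h\|_{L^{p'}_xL^{q'}_t}$ directly, splitting into the resonant region $|t-s|\lesssim 1$ (handled by Young/Hölder) and the dispersive region $|t-s|\gg1$ (handled by the decay estimate plus Hardy--Littlewood--Sobolev), with the derivative gains distributed symmetrically as $D_x^{\gamma_1+\gamma_2}$. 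The general mixed-exponent inhomogeneous estimate with $(p_1,q_1)\ne(p_2,q_2)$ then follows by the same bilinear computation, now with asymmetric exponents, combined with the Christ--Kiselev lemma to pass from the full integral $\int_{\R}$ to the retarded integral $\int_0^t$ (this is legitimate since $q_1',q_2'$ lie strictly below the relevant endpoint, i.e. $q_j>2$ away from the forbidden corner).

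The main obstacle I anticipate is twofold. First, obtaining the sharp dispersive decay for the non-homogeneous-in-$\xi$ phase $|\xi|^\alpha\xi$ uniformly for $\alpha\in[1,2]$: unlike the pure KdV case $\alpha=2$ (Airy function, $|t|^{-1/3}$ on frequency-localized pieces) or $\alpha=1$ (Benjamin--Ono), the intermediate cases require a careful frequency-localized stationary phase estimate and then a Littlewood--Paley summation, keeping track of how the derivative count $\gamma$ balances the dyadic losses — this is where the exponent $\frac{\alpha+1}{q}$ genuinely enters, and the summation only closes because of the restriction $\frac2p+\frac1q\le\frac12$ (strict except at the specific admissible boundary). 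Second, handling the endpoint-adjacent cases $p=\infty$ or $q=\infty$ (but not both) and verifying that the Christ--Kiselev argument survives there; the condition $(p,q)\ne(\infty,\infty)$ is exactly what rules out the genuine endpoint where $TT^*$ fails, so I would treat $q=2$ (the local-smoothing endpoint, provable directly via Plancherel and the change of variables $\eta=|\xi|^\alpha\xi$) and $p=\infty$ (maximal-function-type, requiring a slightly more delicate argument or an interpolation with an interior point) as separate base cases, then fill in the interior by analytic interpolation in $(1/p,1/q)$.
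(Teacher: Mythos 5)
Your proposal follows essentially the same route as the paper: the authors likewise take the smoothing estimate $\|D_x^{\alpha/2}V_\alpha(t)f\|_{L^\infty_xL^2_t}\lesssim\|f\|_{L^2_x}$ and the maximal estimate $\|D_x^{-1/4}V_\alpha(t)f\|_{L^4_xL^\infty_t}\lesssim\|f\|_{L^2_x}$ as corner cases (quoted from Kenig--Ponce--Vega rather than re-derived), add a third $BMO_{t,x}$ endpoint obtained from the classical Strichartz estimates $\|V_\alpha(t)f\|_{L^q_tL^p_x}\lesssim\|f\|_{L^2_x}$ via Sobolev embedding so as to cover the strict-inequality region $\frac{2}{p}+\frac{1}{q}<\frac{1}{2}$, and then conclude by mixed-norm interpolation, the $TT^*$ argument, and the Christ--Kiselev lemma with reversed norms, exactly as you outline. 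The only caveat is that your unweighted decay $|t|^{-1/2}$ should read $|t|^{-1/(\alpha+1)}$ (the $|t|^{-1/2}$ rate costs a gain of $(\alpha-1)/2$ derivatives), but since you explicitly flag the derivative losses this does not affect the argument.
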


\begin{proof}
    The inequality \eqref{inverted_order_Strichartz} already holds if $(p,q,\gamma) = (\infty,2,\alpha/2)$ or $(4,\infty, -1/4)$, i.e.,
    \begin{align} \label{maximal_Strichartz_1}
        \|D_x^{\frac{\alpha}{2}}V_\alpha(t) f\|_{L^{\infty}_x L^2_t} \lesssim \|f\|_{L^2_x},\\
        \label{maximal_Strichartz_2}
        \|D_x^{-\frac{1}{4}}V_\alpha(t) f\|_{L^{4}_x L^{\infty}_t} \lesssim \|f\|_{L^2_x},
    \end{align}
    (see, for instance, \cite[Lemma 2.1]{MR1086966} and \cite[Theorem 2.5]{MR1101221}).
    We also have the dispersive estimate (c.f. {\cite[Corollary 2.3]{MR1086966}})
    \begin{equation*}
        \|V_\alpha(t)f\|_{L^{\infty}_x} \lesssim |t|^{-\frac{1}{\alpha+1}}\|f\|_{L^1_x}, 
    \end{equation*}
    which, combined with mass conservation, gives after interpolation
    \begin{equation}\label{classical_Strichartz}
         \|V_\alpha(t) f\|_{L^q_tL^p_x} \lesssim \|f\|_{L^2_x},
    \end{equation}
    if 
    $$
    \frac{1}{p}+\frac{\alpha+1}{q} = \frac{1}{2}$$
    and $p\geq 2$. Moreover, combining Sobolev Embedding  and \eqref{classical_Strichartz},
    \begin{equation}\label{almost_infinity_Strichartz}
        \|D^{-\frac{1}{2}}V_\alpha(t) f\|_{BMO_{t,x}} \lesssim \|V_\alpha(t)f\|_{L^{\infty}_tL^{2}_x} \lesssim \|f\|_{L^2_x}.
    \end{equation}
 The result then follows from interpolating \eqref{maximal_Strichartz_1}, \eqref{maximal_Strichartz_2} and \eqref{almost_infinity_Strichartz} (see \cite{MR4351127}), together with the usual $TT^*$ arguments and the Christ-Kiselev Lemma with reversed norms \cite[Theorem B]{Burq_Planchon_JFA_2006}.
 
%%%%%% PROVA SEM BMO 
    % Let $q>2(\alpha+2)$ be arbitrarily large, and let $\frac{1}{p} = \frac{1}{2}-\frac{\alpha+1}{q}$ and $s = \frac{1}{p}-\frac{1}{q} = \frac{1}{2}-\frac{\alpha+2}{q}$. We have, by Sobolev and \eqref{classical_Strichartz},
    % \begin{equation}\label{almost_infinity_Strichartz}
    %     \|D^{-s}V_\alpha(t) f\|_{L^q_{t,x}} \lesssim \|V_\alpha(t)f\|_{L^{q}_tL^{p}_x} \lesssim \|f\|_{L^2}.
    % \end{equation}
    % (Morally, we are close to $\|D^{-\frac{1}{2}}V_\alpha(t)f\|_{L^{\infty}_{t,x}}$).
%%%%%% PROVA SEM BMO

%%%%%% PROVA COM BMO

%%%%%%% PROVA COM BMO

\end{proof}
% {\color{red} Conferir se algum endpoint (tipo $L^\infty$) não dá nenhum problema técnico no argumento de dualidade.}

%Usarei $X^s$ para um espaço $L^p_xL^q_t$ que se comporte com a mesma escala de $\dot H^s$ com a transformação $(x,t) \mapsto (\lambda x, \lambda^{\alpha+1}t)$, i.e., tais que $p$ e $q$ satisfazem
% \begin{equation}
%     \frac{\alpha+1}{q}+\frac{1}{p} = \frac{1}{2}-s.
% \end{equation}

% Também usarei $Y'_{-s}$ para o espaço dos duais de H\"older $L^{p'}_xL^{q'}_t$ que satisfazem 
% \begin{equation}
%     \frac{\alpha+1}{q}+\frac{1}{p} = \frac{1}{2}+s.
% \end{equation}

%%%%%%% FUNCIONA SO PRA K>5
% \begin{equation}
%     \dot{X}^{s} =  
%     %W^{s-\frac{1}{4},\frac{4}{1-\theta}}_xL^{\frac{4(\alpha+1)}{\theta}}_t \cap 
%     W^{s+\frac{1}{2},\frac{1}{\theta}}_xL^{\frac{\alpha+1}{1-\theta}}_t \cap  L^\infty_t \dot H^s_x
% \end{equation}

% \begin{equation}
%     S = L^{\frac{k}{1-2\theta}}_xL^{\frac{(\alpha+1)k}{\alpha-1+2\theta}}_t
% \end{equation}

% \begin{equation}
%     \dot N ^s = W^{s+\frac{1}{2},\frac{1}{1-\theta}}_x L^{\frac{\alpha+1}{\alpha+\theta}}_t.
% \end{equation}
%%%%%%% FIM FUNCIONA SO PRA K>5

To estimate the nonlinearity, we consider the space
\begin{equation}
\dot N ^s(I) = \dot W^{s+\frac{1}{2},\frac{4\alpha+2}{3(\alpha+1)}}_x L^{\frac{4\alpha+2}{4\alpha-1}}_t (I \times \mathbb R).
\end{equation}

\begin{lemma}\label{re_Strichartz}
Let $I \subset \R$ be an interval and $s < \frac{1}{2}$. Then,  the following linear estimates hold
\begin{align}\label{strichartz_X}
\|V_\alpha(t)f\|_{\dot X^{s}(I)} &\lesssim \|f\|_{\dot H^s_x},\\ \label{strichartz_S}
\left\|V_\alpha(t)f\right\|_{S^{s_k}(I)} &\lesssim \|f\|_{\dot H^{s_k}_x}.
\end{align}
Furthermore, we have the corresponding inhomogeneous estimates
\begin{align}
\label{21}\left\| \int_I V_\alpha(t-\tau) \partial_xg \, d\tau \right\|_{\dot X^{s}(I)} &\lesssim \|g\|_{\dot N^{s}(I)},\\
\left\| \int_I V_\alpha(t-\tau) \partial_xg \, d \tau \right\|_{S^{s_k}(I)} &\lesssim \|g\|_{\dot N^{s_k}(I)}. \label{22}
\end{align}

\end{lemma}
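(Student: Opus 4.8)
The whole lemma should reduce to the Strichartz-type estimates in Theorem 2.5 plus elementary manipulations with the Duhamel formula and Minkowski's inequality, so the plan is to check that the exponents defining $\dot X^s$, $S^{s_k}$, $\dot N^s$ are exactly the admissible ones coming from Theorem 2.5. First I would verify the homogeneous estimate \eqref{strichartz_X}: the $C_t\dot H^s_x$-component is immediate since $V_\alpha(t)$ is a unitary group on $\dot H^s_x$; for the $\dot W^{s+\frac12,\frac{4\alpha+2}{\alpha-1}}_x L^{\frac{4\alpha+2}{3}}_t$-component I would apply \eqref{inverted_order_Strichartz} with $(p,q) = \left(\frac{4\alpha+2}{\alpha-1},\frac{4\alpha+2}{3}\right)$, checking that $\frac{2}{p}+\frac1q = \frac{\alpha-1}{2\alpha+1}+\frac{3}{4\alpha+2} = \frac{2(\alpha-1)+3}{2(2\alpha+1)} = \frac{2\alpha+1}{2(2\alpha+1)} = \frac12$ (so it is endpoint-admissible), and that the corresponding $\gamma = \frac1p+\frac{\alpha+1}{q}-\frac12 = \frac{\alpha-1}{2(2\alpha+1)}+\frac{3(\alpha+1)}{2(2\alpha+1)}-\frac12 = \frac{4\alpha+2}{2(2\alpha+1)}-\frac12 = \frac12$, so that the gain of $s+\frac12$ derivatives is matched by applying \eqref{inverted_order_Strichartz} to $D_x^s f$. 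The estimate \eqref{strichartz_S} into $S^{s_k} = L^{p_{s_k}}_x L^{q_{s_k}}_t$ is handled the same way: with $s=s_k$ one computes $p_{s_k} = \frac{(2\alpha+1)k}{\alpha+2}$ and $q_{s_k} = \frac{(2\alpha+1)(\alpha+1)k}{2(\alpha^2-1)}= \frac{(2\alpha+1)k}{2(\alpha-1)}$, then checks $\frac{2}{p_{s_k}}+\frac{1}{q_{s_k}} = \frac{2(\alpha+2)}{(2\alpha+1)k}+\frac{2(\alpha-1)}{(2\alpha+1)k} = \frac{4\alpha+2}{(2\alpha+1)k} = \frac{2}{k}\le \frac12$ (using $k\ge4$), and that the associated $\gamma$ equals exactly $0$ — i.e. that no derivative is needed — so that \eqref{inverted_order_Strichartz} applies directly with $\gamma=0$. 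One should double-check the admissibility condition $(p,q)\ne(\infty,\infty)$ and $4\le p$, $2\le q$; the constraint $\alpha\in(1,2]$, $k\ge4$ guarantees these.

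For the inhomogeneous estimates \eqref{21}–\eqref{22}, the plan is to use the second (bilinear Duhamel) estimate in Theorem 2.5, the one that maps $L^{p_2'}_xL^{q_2'}_t$ into $L^{p_1}_xL^{q_1}_t$ with a gain of $\gamma_1+\gamma_2$ derivatives. For \eqref{21} I would take the output pair $(p_1,q_1)=\left(\frac{4\alpha+2}{\alpha-1},\frac{4\alpha+2}{3}\right)$ (giving $\gamma_1=\frac12$ as above) and the input pair $(p_2,q_2)$ such that $(p_2',q_2')=\left(\frac{4\alpha+2}{3(\alpha+1)},\frac{4\alpha+2}{4\alpha-1}\right)$, i.e. $p_2 = \frac{4\alpha+2}{4\alpha-1-4(\alpha+1)+\dots}$ — more cleanly, $\frac{1}{p_2'}=\frac{3(\alpha+1)}{4\alpha+2}$ so $\frac{1}{p_2}=1-\frac{3(\alpha+1)}{4\alpha+2}=\frac{4\alpha+2-3\alpha-3}{4\alpha+2}=\frac{\alpha-1}{4\alpha+2}$, hence $p_2 = \frac{4\alpha+2}{\alpha-1}$, and similarly $\frac{1}{q_2}=1-\frac{4\alpha-1}{4\alpha+2}=\frac{3}{4\alpha+2}$, hence $q_2=\frac{4\alpha+2}{3}$; thus $(p_2,q_2)=(p_1,q_1)$, so $\gamma_2=\frac12$ as well and $\gamma_1+\gamma_2=1$. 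Since $\partial_x = D_x\mathcal H$ costs one derivative, the net gain of $1$ derivative from $\gamma_1+\gamma_2$ exactly compensates $\partial_x$, and the remaining gain of $s+\frac12$ derivatives on both sides is moved onto $g$ via $D_x^{s+\frac12}$, recovering the $\dot W^{s+\frac12,\cdot}_x$ norms on each side; the $C_t\dot H^s_x$-component of $\dot X^s$ is obtained by the energy/duality version (the third displayed estimate of Theorem 2.5 with $(p_1,q_1)=(\infty,2)$, or directly by $TT^*$). The estimate \eqref{22} into $S^{s_k}$ is the same computation with the output pair replaced by $(p_{s_k},q_{s_k})$, whose $\gamma$ is $0$, so that one only gains $\frac12$ derivatives from the input side, exactly matching $\partial_x$ after writing $g$ in $\dot N^{s_k}$; here one must also use the Christ–Kiselev-type passage from the retarded integral $\int_0^t$ to the full integral, which is already packaged inside Theorem 2.5.

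The only genuinely delicate point is bookkeeping of the fractional-derivative weights: one must be careful that $D_x^{s}$ commutes with $V_\alpha(t)$ (trivial) and that the weight $s+\frac12$ in the definitions of $\dot X^s$ and $\dot N^s$ is consistent with the value $\gamma=\frac12$ for that admissible pair, so that applying Theorem 2.5 to $D_x^{s}g$ (or $D_x^{s}f$) produces precisely the claimed homogeneous Sobolev norms without any leftover derivative loss or gain. A secondary subtlety is the hypothesis $s<\frac12$: this is needed to ensure $s_k<s<\frac12$ stays in the range where the Strichartz pairs remain admissible (in particular the constraint coming from $\frac{2}{p}+\frac1q\le\frac12$ at the relevant exponents), so I would record explicitly where $s<\frac12$ is used, namely to keep $\gamma$ and the Lebesgue exponents in the admissible region of Theorem 2.5. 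Beyond this, everything is a direct substitution into Theorem 2.5 together with Minkowski's inequality to exchange the order of $L^p_x$ and $L^q_t$ when needed and the identity $\partial_x = D_x\mathcal H$ with $\mathcal H$ bounded on every $L^p$, $1<p<\infty$.
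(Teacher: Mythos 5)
Your overall strategy is exactly the paper's: the proof there is two lines, reducing everything to Theorem \ref{full_Strichartz} plus the Mikhlin--H\"ormander theorem for mixed Lebesgue spaces (which is the precise form of your remark that $\mathcal H$ is bounded on the relevant spaces --- note the norms are mixed $L^p_xL^q_t$, so plain $L^p$-boundedness of $\mathcal H$ is not quite the right citation, but the mixed-norm multiplier theorem the paper invokes covers it). Your verification of the pair $\bigl(\frac{4\alpha+2}{\alpha-1},\frac{4\alpha+2}{3}\bigr)$, with $\gamma=\frac12$, and the identification of the dual pair giving $\dot N^s$ with $\gamma_1+\gamma_2=1$ absorbing $\partial_x$, are all correct.

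There is, however, one concrete arithmetic error that as written breaks the proof of \eqref{strichartz_S} and \eqref{22}. For the pair $(p_{s_k},q_{s_k})$ one has
\begin{equation*}
\gamma \;=\; \frac{1}{p_{s_k}}+\frac{\alpha+1}{q_{s_k}}-\frac12
\;=\;\frac{\alpha+2}{(2\alpha+1)k}+\frac{2(\alpha^2-1)}{(2\alpha+1)k}-\frac12
\;=\;\frac{\alpha(2\alpha+1)}{(2\alpha+1)k}-\frac12\;=\;\frac{\alpha}{k}-\frac12\;=\;-s_k,
\end{equation*}
not $0$ as you claim (they coincide only when $k=2\alpha$). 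This is not cosmetic: with $\gamma=0$, \eqref{inverted_order_Strichartz} would give $\|V_\alpha(t)f\|_{S^{s_k}}\lesssim\|f\|_{L^2_x}$, and for $s_k>0$ the norm $\|f\|_{L^2_x}$ is not controlled by $\|f\|_{\dot H^{s_k}_x}$, so \eqref{strichartz_S} would not follow. The correct value $\gamma=-s_k$ is exactly what is needed: applying \eqref{inverted_order_Strichartz} to $D_x^{s_k}f$ gives $\|D_x^{-s_k}V_\alpha(t)D_x^{s_k}f\|_{L^{p_{s_k}}_xL^{q_{s_k}}_t}=\|V_\alpha(t)f\|_{S^{s_k}}\lesssim\|D_x^{s_k}f\|_{L^2_x}$. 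The same correction propagates to \eqref{22}: the total derivative gain in the retarded estimate is $\gamma_1+\gamma_2=-s_k+\frac12$, and combined with the $s_k+\frac12$ derivatives carried by the $\dot N^{s_k}$ norm this yields exactly the one derivative needed to absorb $\partial_x$, with no leftover $D_x^{s_k}$; your version (``one only gains $\frac12$ derivatives from the input side'') leaves a mismatch of $s_k$ derivatives. A smaller imprecision: for the $C_t\dot H^s_x$ component of \eqref{21}, the pair $(\infty,2)$ corresponds to the local smoothing norm $L^\infty_xL^2_t$, not the energy norm $L^\infty_t L^2_x$; the energy component should instead come from the dual estimate in Theorem \ref{full_Strichartz} together with the Christ--Kiselev lemma, as you indicate in your ``or directly by $TT^*$'' aside. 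With these corrections the argument closes and coincides with the paper's.
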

\begin{proof}
   The estimates \eqref{strichartz_X} and \eqref{strichartz_S} follow directly from Theorem \ref{full_Strichartz}, while \eqref{21} and \eqref{22} are derived from Theorem \ref{full_Strichartz} and the Mikhlin-Hörmander theorem for mixed Lebesgue spaces \cite[Corollary 1]{MR3388786}.

\end{proof}

\section{Well-posedness theory for $k\geq 4$.}

 We start by proving useful nonlinear estimates.
\begin{proposition}\label{prop_nonlinear_estimates} Let $I \subset \R$ be an interval and $s \in (-\frac{1}{2},\frac{1}{2})$. Then
    \begin{align*}
        &\|u^{k+1}\|_{\dot N^{s}(I)} \lesssim \|u\|^k_{S^{s_k}(I)} \|u\|_{\dot X^{s}(I)}.
        \end{align*}
Furthermore  
        \begin{align}\label{desigualde com potencia}
        \nonumber \|u^{k+1}-v^{k+1}\|_{\dot N^{s}(I)} &\lesssim \left(\|u\|_{S^{s_k}(I)}+\|v\|_{S^{s_k}(I)}\right)^k\|u-v\|_{\dot X^{s}(I)}\\
        \nonumber&\quad+\left(\|u\|_{S^{s_k}(I)}+\|v\|_{S^{s_k}(I)}\right)^{k-1}\|u\|_{\dot X^{s}(I)}\|u-v\|_{S^{s_k}(I)}\\
        &\quad+\left(\|u\|_{S^{s_k}(I)}+\|v\|_{S^{s_k}(I)}\right)^{k-1}\|v\|_{\dot X^{s}(I)}\|u-v\|_{S^{s_k}(I)}.
    \end{align}
\end{proposition}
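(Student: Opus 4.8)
The plan is to reduce the multilinear estimate to the linear Strichartz framework already established, using the fractional Leibniz rule (Proposition \ref{Leibnez Fracionaria}) to distribute the $s+\frac12$ derivatives across the $k+1$ factors, and then balancing Hölder exponents so that exactly one factor lands in the $\dot{X}^s$-norm (the one carrying the derivative) while the remaining $k$ factors land in the $S^{s_k}$-norm. First I would unwind the definitions: $\dot N^s(I) = \dot W^{s+\frac12,\,r}_x L^{\rho}_t$ with $r = \frac{4\alpha+2}{3(\alpha+1)}$ and $\rho = \frac{4\alpha+2}{4\alpha-1}$, while $\dot X^s(I) \subset \dot W^{s+\frac12,\,P}_x L^{Q}_t$ with $P = \frac{4\alpha+2}{\alpha-1}$ and $Q = \frac{4\alpha+2}{3}$, and $S^{s_k}(I) = L^{p_{s_k}}_x L^{q_{s_k}}_t$. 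The key bookkeeping check is the exponent identity
\begin{equation*}
\frac{1}{r} = \frac{1}{P} + \frac{k}{p_{s_k}}, \qquad \frac{1}{\rho} = \frac{1}{Q} + \frac{k}{q_{s_k}},
\end{equation*}
which one verifies directly from the formulas for $p_{s_k}$, $q_{s_k}$ with $s = s_k$ (here $s_k = \frac12 - \frac{\alpha}{k}$, so $s - s_k = 0$ makes $q_{s_k}$ simplify). Granting this, a single application of Hölder in both $x$ and $t$ gives, schematically,
\begin{equation*}
\|D_x^{s+\frac12}(u^{k+1})\|_{L^r_x L^\rho_t} \lesssim \|D_x^{s+\frac12} u\|_{L^P_x L^Q_t}\,\|u\|_{L^{p_{s_k}}_x L^{q_{s_k}}_t}^k,
\end{equation*}
once the derivative has been moved onto a single factor.

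The step I expect to be the main obstacle is precisely the legitimate distribution of the fractional derivative $D_x^{s+\frac12}$ over the product $u^{k+1}$: unlike an integer derivative, $D_x^{s+\frac12}$ does not satisfy an exact Leibniz rule, so I would argue by induction on the number of factors, peeling off one factor at a time via Proposition \ref{Leibnez Fracionaria} in the form $D^r_x(fg) = f D^r_x g + g D^r_x f + \text{(commutator)}$, with the commutator controlled by splitting $r = r_1 + r_2$ and placing $r_1$ on one factor and $r_2$ on the rest. Since $s + \frac12 \in (0,1)$ for $s \in (-\frac12, \frac12)$, Proposition \ref{Leibnez Fracionaria} applies with all exponents strictly between $1$ and $\infty$ (one checks $r, \rho, P, Q, p_{s_k}, q_{s_k}$ are all finite and $>1$ in the stated range of $s$ and $\alpha \in (1,2]$), and the derivative-free factors are always measured in $S^{s_k}(I)$ via the embedding $L^{p_{s_k}}_x L^{q_{s_k}}_t$ with the Hölder exponents adjusted at each inductive step. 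The technical care is that at intermediate steps the "remaining product" $u^{j}$ must be estimated in an $L^a_x L^b_t$ space that interpolates correctly; here one uses that for a pure power $\|u^j\|_{L^a_x L^b_t} = \|u\|_{L^{ja}_x L^{jb}_t}^j$ and matches $ja, jb$ to the $S^{s_k}$ exponents, which is consistent because the target exponents $r,\rho$ were chosen to close exactly.

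Finally, the difference estimate \eqref{desigualde com potencia} follows the same template applied to the algebraic identity
\begin{equation*}
u^{k+1} - v^{k+1} = (u - v)\sum_{j=0}^{k} u^{j} v^{k-j},
\end{equation*}
so that $D_x^{s+\frac12}$ either falls on the factor $u-v$ — producing the first term on the right-hand side, with $\|u-v\|_{\dot X^s}$ and $k$ factors of $u$ or $v$ in $S^{s_k}$ — or falls on one of the $u^j v^{k-j}$ monomials, in which case a further Leibniz expansion puts the derivative on a single $u$ or $v$ (giving $\|u\|_{\dot X^s}$ or $\|v\|_{\dot X^s}$) while the surviving $k-1$ factors sit in $S^{s_k}$ and the residual factor $u-v$ contributes $\|u-v\|_{S^{s_k}}$; the binomial sum over $j$ is absorbed into the constant. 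Combining the two cases and using $\|u\|_{S^{s_k}}, \|v\|_{S^{s_k}} \le \|u\|_{S^{s_k}} + \|v\|_{S^{s_k}}$ to homogenize the powers yields the three stated terms. Throughout, all space-time norms are taken over $I \times \mathbb R$, and the implicit constants depend only on $k$, $\alpha$, and $s$.
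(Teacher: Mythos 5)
Your proposal is correct and follows essentially the same route as the paper: the paper's (two-line) proof likewise invokes Proposition \ref{Leibnez Fracionaria} with $r=r_1=s+\tfrac12$, $r_2=0$, $f=u$, $g=u^k$, and then closes by H\"older with exactly the exponent identity $\tfrac1r=\tfrac1P+\tfrac{k}{p_{s_k}}$, $\tfrac1\rho=\tfrac1Q+\tfrac{k}{q_{s_k}}$ that you verify, the difference estimate being handled by the same factor-the-difference-out algebra. Your write-up is in fact more careful than the paper's on the two delicate points (the iteration needed to control the $u\,D_x^{s+1/2}(u^k)$ term, and the telescoping identity $u^{k+1}-v^{k+1}=(u-v)\sum_j u^jv^{k-j}$, which is the correct form of the paper's loosely stated pointwise bound \eqref{pointwise_difference_bound}).
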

\dem This result directly stems from applying the Proposition \ref{Leibnez Fracionaria} with $r = r_1 = s + \frac{1}{2} $, $r_2 = 0,  f = u, g = u^k $, and the corresponding choices of $ p, q, p_1, q_1, p_2, q_2  \in (1, \infty)$. Additionally, for \eqref{desigualde com potencia} it is enough to recall that
\begin{equation}\label{pointwise_difference_bound}
|u^{k+1}- v^{k+1}| \lesssim |u^k- v^k||u-v|.    
\end{equation}
\qed

\subsection{Local well-posedness in $\dot{H}^{s_k}_x(\R)$}\label{LWP in Critical space}
   
With the previous estimates at hand, we are ready to prove Theorem \ref{Teorema existencia critica}.

\begin{proof}[Proof of Theorem \ref{Teorema existencia critica}: ] 
    For $\varepsilon_0>0$ to be chosen later, if $c>0$ is the implicit constant in Theorem \ref{full_Strichartz}, define
    \begin{align*}
        E_T := \big\{u \in \dot X^{s_k}_T \cap S^{s_k}_T \,:\, \|u\|_{S^{s_k}_T} \leq 2 \|V_\alpha(t)u_0\|_{S^{s_k}_T}\,\,\text{and}\,\,\|u\|_{\dot X^{s_k}_T}\leq 2c \|u_0\|_{\dot H^{s_k}_x} \big\},
    \end{align*}
    equipped with the metric
    \begin{equation*}
        \rho(u,v) = \|u-v\|_{\dot X^{s_k}_T} +  \|u-v\|_{S^{s_k}_T}.
    \end{equation*}
 We set
    \begin{equation*}
        \Phi(u) := V_\alpha(t)u_0 +\mu \int_0^t V_\alpha(t-\tau)\partial_x(u^{k+1})(\tau) \, d\tau 
    \end{equation*} 
    which defines a contraction on $E_T$. Indeed, by Remark \ref{re_Strichartz}, Proposition \ref{prop_nonlinear_estimates}, and \eqref{time_well_posedness}, if $u \in E_T$ is such that \eqref{time_well_posedness} holds, we have
\begin{align}
      \nonumber \|\Phi(u)\|_{S^{s_k}_T} & \leq \|V_\alpha(t)u_0\|_{S^{s_k}_T} + \|u\|_{S^{s_k}_T}^k \|u\|_{\dot X^{s_k}_T}\\
       &\leq \left(1+2^kc\varepsilon_0^{k-1}\|u_0\|_{\dot H^{s_k}_x}\right)\|V_\alpha(t)u_0\|_{S^{s_k}_T}
\end{align}
and
\begin{align*}
        &\|\Phi(u)\|_{\dot X^{s_k}_T} \leq c\|u_0\|_{\dot H^{s_k}_x} + c\|u\|_{S^{s_k}_T}^k \|u\|_{\dot X^{s_k}_T} \leq \left(1+2^k\varepsilon_0^{k}\right)c\|u_0\|_{\dot H^{s_k}_x}.
\end{align*}
Therefore
\begin{equation*}
\rho(\Phi(u),\Phi(v))\leq 4^k\varepsilon_0^{k-1}\left(\varepsilon_0+\|u_0\|_{\dot H^{s_k}}\right)c\rho(u,v).
\end{equation*}
Thus, it is suffices to choose $\displaystyle0<\varepsilon_0^{k-1}<\min \left\{\frac{1}{4^{k+1}c},\frac{1}{4^{k+1}c\|u_0\|_{\dot H^{s_k}}}\right\}$.

Now, if $u$ and $\tilde u$ are solutions with initial data $u_0$ and $\tilde u_0$, we first estimate
\begin{equation*}
    \|V_\alpha(t)\tilde u_0\|_{S^{s_k}_T} \leq \|V_\alpha(t)u_0\|_{S^{s_k}_T} + O(\|u_0 - \tilde u_0\|_{\dot H^{s_k}}) 
    \leq \epsilon_0 + O(\|u_0 - \tilde u_0\|_{\dot H^{s_k}}),
\end{equation*}
which says we are able to choose the same $T>0$ for both solutions in the fixed-point argument, given $u_0$ and $\tilde u_0$ are close in $\dot H^{s_k}$ (say, $\|u_0 - \tilde u_0\|_{\dot H^{s_k}} \ll \epsilon_0$). Now, from the Duhamel formula and the  Strichartz estimates, we deduce
\begin{align*}
    \rho(u,\tilde u) 
    &\lesssim \|u_0 - \tilde u_0\|_{\dot H^{s_k}} + \left[\|u\|_{S^{s_k}} + \|\tilde u\|_{S^{s_k}}\right]^k \rho(u,\tilde u)
    \\&\quad+
    \left[\|u\|_{S^{s_k}} + \|\tilde u\|_{S^{s_k}}\right]^{k-1}
    (\|u\|_{\dot X^{s_k}} + \|\tilde u\|_{\dot X^{s_k}})    
    \rho(u,\tilde u)
    \\
    &\lesssim  \|u_0 - \tilde u_0\|_{\dot H^{s_k}} + \epsilon_0^{k-1}\left[\epsilon_0 + \|u_0\|_{\dot H^{s_k}}\right] \rho(u,\tilde u)
\end{align*}

since $k > 1$, we have
\begin{equation*}
\rho(u,\tilde u) \lesssim  \|u_0 - \tilde u_0\|_{\dot H^{s_k}}.
\end{equation*}

Therefore, the continuity of solutions with respect to initial data follows.
\end{proof}
%{\color{red} Continuidade do dado-solução é quase imediata aqui. Mas acho que seria bom citar.}

% Assuming sufficiently small data, we can obtain the following corollary of the Theorem \ref{Teorema existencia critica}.

% \begin{corollary} For $k \geq 4$, there exists $\delta_{sd}>0$ such that, if $u_0$ satisfies
% \begin{equation}
%     \|u_0\|_{\dot H ^{s_k}_x} < \delta_{sd},
% \end{equation}
% then there exists  a unique solution $u \in \dot X^{s_k}(\R)\cap S^{s_k}(\R)$ to \eqref{DGBO} with initial data $u_0$. Moreover, the solution $u$ scatters forward and backwards in time in $\dot H^{s_k}_x$.
% \end{corollary}
\begin{proof}[Proof of Corollary {\ref{cor:critical_small_data}}]
    Recalling the Strichartz estimate \eqref{strichartz_S}, from the proof of Theorem \ref{Teorema existencia critica}, it is enough to choose $\delta_{sd} = \min\left\{\frac{1}{4c},1\right\}$. We then have
    \begin{equation*}
        \|u\|_{S^{s_k}(\mathbb R)} +
        \|u\|_{\dot X^{s_k}(\mathbb R)} \lesssim \|u_0\|_{\dot H^{s_k}_x} < \delta_{sd},
    \end{equation*}
    which allows us to define 
    \begin{equation*}
        \phi_+ = u_0 + \mu\int_0^{+\infty} V_\alpha(-\tau) \partial_x (u^{k+1}(\tau))\, d\tau 
    \end{equation*}
    and
    \begin{equation*}
        \phi_- = u_0 + \mu\int_{-\infty}^{0} V_\alpha(-\tau) \partial_x (u^{k+1}(\tau))\, d\tau 
    \end{equation*}
    belonging to $\dot H^{s_k}_x(\R)$, since
    \begin{equation*}
        \|\phi_{\pm}\|_{\dot H^{s_k}_x} \lesssim \|u_0\|_{\dot H^{s_k}_x} + \|u_0\|_{\dot H^{s_k}_x}^{k+1}. 
    \end{equation*}
    Moreover, 
    \begin{equation*}
        \|u(t) - V_\alpha(t)\phi_{+}\|_{\dot H^{s_k}_x} \leq \|u\|_{S^{s_k}([t,\infty))}^k \|u_0\|_{\dot H^{s_k}_x} \to 0,
    \end{equation*}
    as $t \to +\infty$, with an analogous assertion for $\phi_-$.
\end{proof}

We can also derive the first case of Corollary \ref{cor:subcritical_blowup_rate}.
% \begin{corollary}[Blow-up alternative] \label{critical_blow_up_alternative}
%     Let $k \geq 4$ and $u_0 \in \dot H^{s_k}_x(\R)$. If the maximal time of existence $T^*>0$ of the corresponding solution $u \in \dot X^{s_k}_{T^*} \cap S^{s_k}_{T^*}$ is finite, then 

% \begin{equation}
%     \|u\|_{S^{s_k}_{T^*}} = +\infty.
% \end{equation}
% \end{corollary}
\begin{proof}[Proof of Corollary {\ref{cor:subcritical_blowup_rate}}: Case 1.]
    Suppose $\|u\|_{S^{s_k}_{T^*}} < +\infty$. For $0<s<t<T^*$, we have
    \begin{equation*}
        \|u\|_{\dot X^{s_k}([s,t])} \lesssim \|u(s)\|_{\dot H^{s_k}_x} + \|u\|^k_{S^{s_k}([s,T^*))} \|u\|_{\dot X^{s_k}([s,t])}.
    \end{equation*}
    This implies that there exists $s_0<T^*$ such that
    \begin{equation*}
        \|u\|_{\dot X^{s_k}([s_0,T^*))} \lesssim \|u(s_0)\|_{\dot H^{s_k}_x},
    \end{equation*}
    that is, $\|u\|_{\dot X^{s_k}_{T^*}} < +\infty$. Moreover, for $0<s<t<T^*$,
    \begin{equation*}
    \|u(t)-u(s)\|_{\dot H^{s_k}_x} \lesssim \|[V_\alpha(t-s)-I]u_0\|_{\dot H^{s_k}_x} +     \|u\|^k_{S^{s_k}([s,t])} \|u\|_{\dot X^{s_k}_{T^*}}
    \end{equation*}
    from which we deduce that
    $$
    \lim_{t \nearrow T^*} u(t)
    $$
exists in $\dot H^{s_k}_x(\R)$. By uniqueness, this contradicts the maximality of $T^*$.

\end{proof}

Analogous estimates as above in a suitable space allow us to prove Theorem \ref{teo:propagation_regularity}. Indeed,

% \begin{proposition}[Propagation of regularity] For $s\geq 0$, let $u_0 \in \dot H^s_x (\R)\cap \dot H^{s_k}_x(\R)$. Then the solution $u \in \dot X^{s_k}_T \cap S^{s_k}_T$ given by the previous theorem also belongs to $\dot X^{s}_T$. 
% \end{proposition}

\begin{proof}[Proof of Theorem {\ref{teo:propagation_regularity}}] It is enough to run a fixed-point argument on
\begin{align*}
        E_T = \big\{u \in \dot X^{s_k}_T \cap \dot X^{s_k}_T\cap S^{s_k}_T \,:\, \|u\|_{S^{s_k}_T} \leq 2 \|V_\alpha(t)u_0\|_{S^{s_k}_T},
        \,\, &\|u\|_{\dot X^{s}_T}\leq 2c \|u_0\|_{\dot H^{s}_x},
        \\ &\|u\|_{\dot X^{s_k}_T}\leq 2c \|u_0\|_{\dot H^{s_k}_x} \big\},
    \end{align*}
    equipped with the metric
    \begin{equation*}
        \rho(u,v) = \|u-v\|_{\dot X^{s}_T}+ \|u-v\|_{\dot X^{s_k}_T} +  \|u-v\|_{S^{s_k}_T}.
  \end{equation*}
    
\end{proof}

\subsection{Local well-posedness in $\dot H^s_x(\R)$ for $s>s_k$} \label{LWP in subcritical space}
% \begin{definition} For $s_k < s < s_k + \frac{2(\alpha^ 2-1)}{(2\alpha+1)k}$, by previous definitions we have 

% \begin{equation}
%  \frac{1}{\tilde p} = \frac{\alpha+2}{(2\alpha+1)k}, \quad \frac{1}{\tilde q} = \frac{2(\alpha-1)}{(2\alpha+1)k}-\frac{s-s_k}{\alpha+1}.
% \end{equation}
% We then define the space $S^s(I)$ as 
% \begin{equation}
%     S^s(I) = L^{\tilde p}_x L^ {\tilde q}_t (I\times \R),
% \end{equation}
% in particular, we denote $S^s([0,T]):= S^s_{T}$.
% \end{definition}

We are also interested in the well-posedness of the problem \eqref{DGBO} for data in homogeneous Sobolev spaces. Since we cannot rely on the critical-scaling norm $S^{s_k}$ in this context, we need to replace it with a suitable alternative.

First of all, we observe that by Theorem \ref{full_Strichartz}, we have the following estimates
\begin{equation*}
 \|V_\alpha(t)f\|_{S^s(I)} \lesssim \|f\|_{\dot H^s_x(\R)}
\end{equation*}
and
\begin{align*}
\left\|\int V_\alpha(t-\tau)\partial_xg \, d \tau \right\|_{S^s(I)} \lesssim \|g\|_{\dot N^{s}(I)}.    
\end{align*}
Moreover, if we consider $I=[0,T]$ we obtain by the H\"older inequality
\begin{align*}
        \|u^{k+1}\|_{\dot N^{s}_T} &\lesssim T^ {\frac{k(s-s_k)}{\alpha+1}}\|u\|^k_{S_{T}^s} \|u\|_{\dot X^{s}_T}.
\end{align*}
Similarly,
\begin{align*}
        \|u^{k+1}-v^{k+1}\|_{\dot N^{s}_T} &\lesssim T^ {\frac{k(s-s_k)}{\alpha+1}}\left(\|u\|_{S_{T}^s}+\|u\|_{S_{T}^s}\right)^k\|u-v\|_{\dot X^{s}_T} \\&\hspace{0.4cm} +T^ {\frac{k(s-s_k)}{\alpha+1}}\left(\|u\|_{S_{T}^s}+\|u\|_{S_{T}^s}\right)^{k-1}(\|u\|_{\dot X^{s}_T} + \|u\|_{\dot X^{s}_T})\|u-v\|_{S_{T}^s}.
    \end{align*}

The above estimates allows us to derive Theorem {\ref{teo:subcritical_lwp}}.

% \begin{proposition}[Homogeneous subcritical well-posedness] Let $s_k < s < s_k + \frac{2(\alpha^ 2-1)}{(2\alpha+1)k}$ and $u_0 \in \dot H^s_x(\R)$. Then, there exists $T(\|u_0\|_{\dot H^s_x})>0$ such that there exists  a unique solution $u \in \dot X^{s}_T\cap S^s_T$ to \eqref{DGBO}.
    
% \end{proposition}
\begin{proof}[Proof of Theorem {\ref{teo:subcritical_lwp}}] By employing similar arguments as in the proof of Theorem \ref{Teorema existencia critica}, it is enough to choose $T \displaystyle\ll {\|u_0\|_{\dot H^s_x}^{-\frac{\alpha+1}{s-s_k}}}$ and to consider the space
\begin{align}\label{fixed_point_space_homgeneous_LWP}
        E_T = \big\{u \in \dot X^{s}_T \cap S^s_T \,:\,& \|u\|_{S^s_T}  +\|u\|_{\dot X^{s}_T}\leq 2c \|u_0\|_{\dot H^{s}_x} \big\},
    \end{align}
    equipped with the metric
    \begin{equation*}
        \rho(u,v) = \|u-v\|_{\dot X^{s}_T} +  \|u-v\|_{S^s_T}.
    \end{equation*}
\end{proof}

A more refined analysis can provide us with the desired subcritical blow-up rate stated in Corollary {\ref{cor:subcritical_blowup_rate}}.

% \begin{corollary}[Subcritical homogeneous blow-up rate] Let $s_k < s < s_k + \frac{2(\alpha^ 2-1)}{(2\alpha+1)k}$ and $u_0 \in \dot H^ s \cap \dot H^ {s_k}$. If the maximal time of existence $T^*>0$ of the corresponding solution $u \in \dot X^{s}_{T^*} \cap \dot X^{s_k}_{T^*}  \cap S_{T^*} $ is finite, then 

% \begin{equation}
%     \|u\|_{S_{T^*}^s } = +\infty.
% \end{equation}
% Moreover,
% \begin{equation}
%     \|u(t)\|_{\dot H^s_x} \gtrsim \frac{1}{(T^* -t)^{\frac{s-s_k}{\alpha+1}}}.
%     %{(s-s_k)/(\alpha+1)}}.
% \end{equation}
    
% \end{corollary}
\begin{proof}[Proof of Corollary {\ref{cor:subcritical_blowup_rate}}: Case 2.]
    Suppose first that $\|u\|_{S_{T^*}^s } < +\infty$. Similarly to the proof of Case 1, this implies $\|u\|_{\dot X^s_{T^*} } < +\infty$, which in turn contradicts the maximality of $T^*$.
    
    Assume now that there exists a sequence of times $t_n \nearrow T^*$ such that
    
\begin{equation*}
    (T^* -t_n)^{(s-s_k)/(\alpha+1)}\|u(t_n
)\|_{\dot H^s_x} \leq \frac{1}{n}.
\end{equation*}
For $t \in (t_n,T^*)$, we have

\begin{equation*}
    0\leq t-t_n \leq  \frac{1}{(n\|u(t_n)\|_{\dot H^s_x})^{\frac{\alpha+1}{s-s_k}}}.
\end{equation*}
Thus, recalling \eqref{fixed_point_space_homgeneous_LWP} we conclude
\begin{equation*}
    \|u\|_{\dot X^s([t_n,t])} \lesssim \|u(t_n)\|_{\dot H^s_x} + \left[(t-t_n)^{(s-s_k)/(\alpha+1)}\|u(t_n)\|_{\dot H^s_x}\right]^k \|u\|_{\dot X^s([t_n,t])}.
\end{equation*}
Therefore, there exists $n_0$ such that 
\begin{equation*}
    \|u\|_{\dot X^s([t_{n_0},T^*))} \lesssim \|u(t_{n_0})\|_{\dot H^s_x}. 
\end{equation*}
The last estimate, in turn, implies, in the same fashion, that
\begin{equation*}
    \|u\|_{ S^s([t_{n_0},T^*))} \lesssim \|u(t_{n_0})\|_{\dot H^s_x},
\end{equation*}
which contradicts \eqref{infinite_subcritical_scattering_norm}.
\end{proof}

If the initial data $u_0$ is assumed to belong to an inhomogeneous Sobolev space $H^s_x(\mathbb{R})$, \textit{i.e.}, has finite mass, the fixed-point argument can be applied using metrics defined by the $\dot{X}^0(I) \cap S^0(I)$ norm, which imposes no upper restriction on $s \geq s_k$. However, we will omit the proof for this case.

\subsection{Construction of the wave operator}\label{Proof of Wave Operator}

To prove the Theorem \ref{wave operator}, we will closely follow the techniques developed in \cite{MR3043074}. More precisely, we will show the existence of a fixed point for the operator
$$
\Psi(v)(t):= -\mu \int_{t}^\infty V_\alpha(t- \tau)\partial_x \left( v(\tau)+ V_\alpha(\tau)v_0 \right)^{k+1}\, d\tau,
$$
considered on the time interval $[T_0, \infty)$, where $T_0 > 0$ is a sufficiently large number to be determined later, and with $v_0 \in \dot{H}^{s_k}_x(\R)$. By applying similar arguments to those in \cite{MR2384545}, we obtain the following proposition.

\begin{proposition}[\hspace{-0.02cm}\cite{MR2384545}, Proposition 4.1]\label{farah}
Let $v$ be a fixed point of the operator $\Psi$. Then the function
\begin{equation}
u(t) = V_\alpha(t)v_0 + v(t),
\end{equation}
satisfies the Duhamel formula of the equation  \eqref{DGBO} in the time interval $[T_0, \infty)$.
\end{proposition}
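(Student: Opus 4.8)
The plan is to verify directly that if $v$ is a fixed point of $\Psi$ on $[T_0,\infty)$, then $u(t) := V_\alpha(t)v_0 + v(t)$ satisfies the Duhamel integral equation
$$
u(t) = V_\alpha(t-T_0)u(T_0) + \mu\int_{T_0}^{t} V_\alpha(t-\tau)\partial_x\bigl(u^{k+1}(\tau)\bigr)\,d\tau,
$$
for $t \ge T_0$. The key input is that the hypotheses guaranteeing $v \in \dot X^{s_k}([T_0,\infty)) \cap S^{s_k}([T_0,\infty))$ (obtained from the contraction argument for $\Psi$, for $T_0$ large) make all the integrals below absolutely convergent in the relevant spaces, so that the group property $V_\alpha(t-\tau) = V_\alpha(t-T_0)V_\alpha(T_0-\tau)$ and the splitting of integrals are justified.

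First I would substitute the fixed-point identity $v(t) = \Psi(v)(t) = -\mu\int_t^\infty V_\alpha(t-\tau)\partial_x(v(\tau)+V_\alpha(\tau)v_0)^{k+1}\,d\tau$. Writing $u(\tau) = v(\tau) + V_\alpha(\tau)v_0$, this reads $v(t) = -\mu\int_t^\infty V_\alpha(t-\tau)\partial_x(u^{k+1}(\tau))\,d\tau$. Then for $t \ge T_0$,
$$
u(t) = V_\alpha(t)v_0 - \mu\int_t^\infty V_\alpha(t-\tau)\partial_x(u^{k+1}(\tau))\,d\tau,
$$
and likewise, applying $V_\alpha(t-T_0)$ to the same identity evaluated at $T_0$,
$$
V_\alpha(t-T_0)u(T_0) = V_\alpha(t)v_0 - \mu\int_{T_0}^\infty V_\alpha(t-\tau)\partial_x(u^{k+1}(\tau))\,d\tau,
$$
using the group law. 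Subtracting the two displays gives exactly
$$
u(t) - V_\alpha(t-T_0)u(T_0) = \mu\int_{T_0}^{t} V_\alpha(t-\tau)\partial_x(u^{k+1}(\tau))\,d\tau,
$$
which is the Duhamel formula for \eqref{DGBO} on $[T_0,\infty)$ with data $u(T_0)$ at time $T_0$.

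The main obstacle — really the only nontrivial point — is ensuring the improper integrals converge and that the manipulations (group law inside the integral, splitting $\int_{T_0}^\infty = \int_{T_0}^t + \int_t^\infty$) are legitimate; this is where one invokes the nonlinear estimate of Proposition \ref{prop_nonlinear_estimates} together with the inhomogeneous Strichartz bounds \eqref{21}--\eqref{22} of Lemma \ref{re_Strichartz} to control $\|u^{k+1}\|_{\dot N^{s_k}([t,\infty))} \lesssim \|u\|_{S^{s_k}([t,\infty))}^k\|u\|_{\dot X^{s_k}([t,\infty))}$, which is finite and in fact tends to $0$ as $t \to \infty$. One should also note that $v \in C([T_0,\infty):\dot H^{s_k}_x)$ and $V_\alpha(\cdot)v_0 \in C(\R:\dot H^{s_k}_x)$, so $u \in C([T_0,\infty):\dot H^{s_k}_x)$ as claimed. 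Everything else is the bookkeeping already carried out in \cite{MR2384545}, so I would simply remark that the argument there applies verbatim.
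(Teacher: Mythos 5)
Your argument is correct and is exactly the standard verification that the paper delegates to \cite{MR2384545}: substitute the fixed-point identity, apply the group law $V_\alpha(t-T_0)V_\alpha(T_0-\tau)=V_\alpha(t-\tau)$ to the identity at $t=T_0$, and subtract to obtain the Duhamel formula with data $u(T_0)$, with convergence of the tail integrals supplied by Proposition \ref{prop_nonlinear_estimates} and Lemma \ref{re_Strichartz}. The paper gives no independent proof, so there is nothing further to compare.
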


In order to prove Theorem \ref{wave operator}, we apply the contraction mapping principle. Given $T>0$ and $I = [T,+\infty)$ we consider the space
% $$
% F_T := \left\{ v \in C([T,\infty):\,\dot{H}^{s_k}_x(\R)): \|v\|_{F_T}< \infty \right\}
% $$
% where
% $$
% \|v \|_{F_T}:= \|v\|_{L^\infty_T\dot{H}^{s_k}_x} + \|D^{s_k+\frac{1}{2}}_x v\|_{L^{\frac{4\alpha+2} {\alpha-1}}_xL^{\frac{4\alpha+2}{3}}_T} + \|v\|_{L^{\frac{(2\alpha+1)k}{\alpha+2}}_xL^{\frac{(2\alpha+1)k}{2(\alpha-1)}}_T},
% $$
% {\color{red}
% Essa norma $F_T$ exatamente igual a 
% \begin{equation}
%     \|v\|_{\dot X^{s_k}([T,+\infty))} + \|v\|_{S^{s_k}([T,+\infty)) }
% \end{equation}
% }
% and the subset
% $$
% F_T^R := \left\{ v \in F_T: \|v\|_{F_T} \leq R \right\}.
% $$
\begin{align*}
    F_T = \left\{ v \in \dot X^{s_k}(I) \cap S^{s_k}(I) :
    \|v\|_{\dot X^{s_k}(I)} \leq 2 c \|v_0\|_{\dot H^{s_k}},\,\|v\|_{S^{s_k}(I) } \leq 2 \|V_\alpha(t)v_0\|_{S^{s_k}(I)} \right\}.
\end{align*}

Observe that, from \eqref{22}, we have
% $$
% \|\Psi (v) \|_{L^{\frac{(2\alpha+1)k}{\alpha+2}}_xL^{\frac{(2\alpha+1)k}{2(\alpha-1)}}_T} \lesssim \|D_x^{s_k+\frac{1}{2}}(\left( v(t)+ V_\alpha(t)v_0 \right)^{k+1}) \|_{L_x^{\frac{4\alpha+2}{3(\alpha+1)}}L^{\frac{4\alpha+2}{4\alpha-1}}_T}.
% $$
\begin{equation*}
\|\Psi (v) \|_{S^{s_k}(I)} \lesssim \| (v+ V_\alpha(t)v_0 )^{k+1}) \|_{\dot N^{s_k}(I)}.
\end{equation*}

By Proposition \ref{prop_nonlinear_estimates} we obtain
% \begin{align*}
% \|D_x^{s_k+\frac{1}{2}}(\left( v(t)+V_\alpha(t)v_0 \right)^{k+1}) \|_{L_x^{\frac{4\alpha+2}{3(\alpha+1)}}L^{\frac{4\alpha+2}{4\alpha-1}}_T}&\lesssim \|v(t)+ V_\alpha(t)v_0\|_{L^{\frac{(2\alpha+1)k}{\alpha+2}}_xL^{\frac{(2\alpha+1)k}{2(\alpha-1)}}_T}^k \times \\
% &\hspace{0.5cm} \times \|D^{s_k+\frac{1}{2}}_x\left( v(t)+ V_\alpha(t)v_0\right)\|_{L^{\frac{4\alpha+2} {\alpha-1}}_xL^{\frac{4\alpha+2}{3}}_T}.
% \end{align*}
\begin{align*}
\|\left( v+V_\alpha(t)v_0 \right)^{k+1} \|_{\dot N^{s_k}(I)}&\lesssim \|v+ V_\alpha(t)v_0\|_{S^{s_k}(I)}^k  \| v+ V_\alpha(t)v_0\|_{\dot X^{s_k}(I)}.
\end{align*}

Therefore, by Theorem \ref{full_Strichartz} we can conclude
% \begin{align*}
% \|\Psi (v) \|_{F_T} & \lesssim\|v(t)+V_\alpha(t)v_0\|_{L^{\frac{(2\alpha+1)k}{\alpha+2}}_xL^{\frac{(2\alpha+1)k}{2(\alpha-1)}}_T}^k\|D^{s_k+\frac{1}{2}}_x\left( v(t)+ V_\alpha(t)v_0\right)\|_{L^{\frac{4\alpha+2} {\alpha-1}}_xL^{\frac{4\alpha+2}{3}}_T}\\
% &\lesssim \|V_\alpha(t)v_0\|_{L^{\frac{(2\alpha+1)k}{\alpha+2}}_xL^{\frac{(2\alpha+1)k}{2(\alpha-1)}}_T}^k \|D^{s_k+\frac{1}{2}}_x v\|_{L^2_x} +\|v\|^k_{F_T}\|D^{s_k+\frac{1}{2}}_x v\|_{L^2_x} + \|v\|^{k+1}_{F_T}.
% \end{align*}
\begin{align*}
\|\Psi (v) \|_{\dot X^{s_k}(I) \cap S^{s_k}(I)} & \lesssim\|v+V_\alpha(t)v_0\|_{S^{s_k}(I)}^k\|\left( v+ V_\alpha(t)v_0\right)\|_{\dot X^{s_k}(I)}\\
&\lesssim \|V_\alpha(t)v_0\|_{S^{s_k}(I)}^k \| v_0\|_{\dot{H}^{s_k}_x}.  
% +\|v\|^k_{F_T}\|v_0\|_{\dot{H}^{s_k}_x} + \|v\|^{k+1}_{F_T}.
\end{align*}
Since $\|V_\alpha(t)v_0\|_{S^{s_k}(\mathbb R)} < + \infty$ one can choose $T_0>0$ large enough such that
$$
\Psi: F_T \longrightarrow F_T,
$$
is well-defined. We can also see that $\Phi$ defines a contraction since, for $v$, $w \in F_T$,
\begin{align*}
    \|\Psi (w_1) - &\Psi (w_2)  \|_{\dot X^{s_k}(I) \cap S^{s_k}(I)}\\ &\lesssim \left(\|w_1\|_{S^{s_k}(I)}+\|w_2\|_{S^{s_k}(I)}+\|V_\alpha(t)v_0\|_{S^{s_k}(I)}\right)^k\| w_1 -w_2\|_{\dot X^{s_k}(I)}\\
&\lesssim \|V_\alpha(t)v_0\|_{S^{s_k}(I)}^k \| w_1-w_2\|_{\dot{H}^{s_k}_x} .
\end{align*}

By the contraction mapping principle, $\Psi$ has a unique fixed point $v \in F_T$. Thus, by Proposition \ref{farah}, the function
$$
u(t):= v(t) + V_\alpha(t)v_0,
$$
satisfies the integral equation on the time interval $[T_0,\infty)$. Hence,
\begin{align*}
\|u(t) - V_\alpha(t)v_0\|_{\dot{H}^{s_k}_x} &= \|v(t)\|_{\dot{H}^{s_k}_x}\\
&\leq \|\Psi(v)\|_{L^\infty_t \dot{H}^{s_k}_x([t,+\infty))}\\
&\lesssim \|V_\alpha(t)v_0\|_{S^{s_k}([t,+\infty))}^k\| v_0\|_{\dot H^{s_k}} \to 0 \quad \text{as } t \to +\infty.
\end{align*}
This finishes the proof.
% \begin{align*}
%     \|u(\cdot) - V_\alpha(\cdot)v_0\|_{L^\infty_T \dot{H}^{s_k}_x} &\lesssim \|V_\alpha(t)v_0\|_{L^{\frac{(2\alpha+1)k}{\alpha+2}}_xL^{\frac{(2\alpha+1)k}{2(\alpha-1)}}_T}^k \|D^{s_k+\frac{1}{2}}_x v\|_{L^2_x}\\
%     & \quad+ \|v\|^k_{F_T}\|D^{s_k+\frac{1}{2}}_x v\|_{L^2_x} + \|v\|^{k+1}_{F_T}.
% \end{align*}
% Since $\displaystyle \lim_{T \rightarrow +\infty}$ $\|v\|_{F_T} =0$, the result follows.

\qed   

\section{Case $k=3$}\label{seção caso k=3}

In this section, we will examine problem \eqref{DGBO} for $k=3$, that is,
   \begin{equation}\label{DGBOl2}
	\left\{
	\begin{array}{l}
	u_t + D_x^{\alpha} u_x +  \mu \, u^3u_x= 0, \quad (t,x)  \in \R \times \R,\\
		u(0,x)=u_0(x).
	\end{array}
	\right.
\end{equation}
% Similar to equation \eqref{DGBO}, the equation \eqref{DGBOl2} also exhibits the same scaling invariance
%  $$
%  u_{\lambda}(t,x):= \lambda^{-\frac{\alpha}{3}}u\left(\frac{t}{\lambda^{\alpha+1}}, \frac{x}{\lambda}\right),
%  $$
% for all $\lambda>0$. Thus, if $u: \R \times \R \rightarrow \R$ is a solution of \eqref{DGBOl2} we have
% $$
% \| u_{\lambda}(t)\|_{\dot{H}^{s_3}_x(\R)} = \| u(t)\|_{\dot{H}^{s_3}_x(\R)},
% $$
% for all $\lambda>0$ and $t \in \R$ where
% $$
% s_3= \frac{1}{2} - \frac{\alpha}{3}.
% $$
We start by pointing out that the method used in the previous section does not apply in this case. To treat problem \eqref{DGBOl2}, we employ a fixed-point argument based on the one introduced by Bourgain to study dispersive equations (see \cite{MR1209299} and \cite{MR1215780}).

For $s,b \in \R$ we define the  Bourgain space $X^{s,b}_\alpha := X^{s,b}_\alpha(\R \times \R)$ related to the equation in \eqref{DGBOl2} as the completion of the Schwartz class $\Sc(\R^2)$ under the norm
$$
\|u\|_{X^{s,b}_\alpha}^2 := \int_\R \int_\R \langle  \tau- \xi|\xi|^{\alpha}\rangle^{2b} \langle \xi \rangle^{2s} |\tilde{u}(\tau, \xi)|^2 \, d\tau d\xi 
$$
where  $\tilde{u}(\tau,\xi):= \F_{t,x}(u)(\tau, \xi)$ denotes the space-time Fourier transform of $u$. Also, consider a smooth function  $\psi \in C^{\infty}_c(\R)$ with $\supp \psi \subset [-2,2]$ and $\psi \equiv 1$ in $[-1,1]$. Denote
$$
\psi_T(t):= \psi\left(\frac{t}{T}\right), \quad T \in (0,1).
$$

Our goal in this section is to establish the following result.
\begin{theorem}\label{Teorema em XSB}
Let $1<\alpha \leq 2$ and $s>s_3$. Given $u_0 \in H^s_x(\R)$, there exist $T>0$ and a unique $u \in X^{s,\frac{1}{2}^+}_\alpha$ satisfying
$$
u(t)= \psi(t)V_\alpha(t) u_0 + \mu \,\, \psi_T(t) \int_0^t V_\alpha(t-\tau)\partial_x(u^4)(\tau)\, d\tau.
$$
\end{theorem}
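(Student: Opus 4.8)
The plan is to set up a standard Bourgain-space fixed-point argument for the operator
\[
\Phi(u)(t) := \psi(t) V_\alpha(t) u_0 + \mu\, \psi_T(t)\int_0^t V_\alpha(t-\tau)\partial_x(u^4)(\tau)\, d\tau
\]
on a ball in $X^{s,\frac12^+}_\alpha$, and to show it is a contraction for $T>0$ small. The two ingredients needed are the linear estimates in $X^{s,b}_\alpha$ and a multilinear (quadri-linear) estimate for the nonlinearity. For the linear part, I would invoke the classical $X^{s,b}$ machinery: for $b>\frac12$ one has $\|\psi(t)V_\alpha(t)u_0\|_{X^{s,b}_\alpha}\lesssim \|u_0\|_{H^s_x}$, and for the Duhamel term, the standard estimate $\big\|\psi_T(t)\int_0^t V_\alpha(t-\tau)F(\tau)d\tau\big\|_{X^{s,b}_\alpha}\lesssim T^{1-b-b'}\|F\|_{X^{s,-b'}_\alpha}$ whenever $0\le b'<\frac12<b$ and $b+b'\le 1$; this gives the crucial small power of $T$. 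So everything reduces to the bilinear/multilinear claim
\[
\|\partial_x(u_1 u_2 u_3 u_4)\|_{X^{s,-b'}_\alpha}\lesssim \prod_{j=1}^4 \|u_j\|_{X^{s,b}_\alpha}
\]
for suitable $b,b'$ with $b-b'>0$ small (so that the resulting $T^{1-b-b'}$ factor is a genuine positive power). By the usual polarization/duality argument one dualizes against a fifth function $v\in X^{-s,b'}_\alpha$ and reduces to a weighted convolution estimate over the hyperplane $\sum\xi_j=0$, $\sum\tau_j=0$.

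The heart of the matter is this frequency-restricted convolution estimate, and I expect this to be the main obstacle. The paper itself flags that the $k=3$ case is handled "via frequency-restricted estimates" rather than the Strichartz scheme of Sections 2–3, which signals that one cannot simply plug in Strichartz norms but must exploit the dispersion relation $\tau - \xi|\xi|^\alpha$ through the resonance function
\[
H(\xi_1,\dots,\xi_4) = \Big(\sum_j \xi_j\Big)\Big|\sum_j\xi_j\Big|^\alpha - \sum_j \xi_j|\xi_j|^\alpha
\]
(evaluated on $\sum\tau_j=0$), together with the gain of one derivative from $\partial_x$. The strategy I would follow is a Littlewood--Paley decomposition into dyadic frequency pieces $N_1\ge N_2\ge N_3\ge N_4$ and modulation pieces $L_0,L_1,\dots,L_4$; one of the modulations must dominate $H$, so $\max(L_j)\gtrsim |H|$. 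The non-trivial interactions are the high-high ones: when $N_1\sim N_2$ is much larger than the output frequency, one needs a lower bound on $|H|$ (here the convexity of $\xi\mapsto\xi|\xi|^\alpha$ and $\alpha>1$ are essential; for $\alpha=1$ the BO-type resonances are known to obstruct, consistent with the paper excluding $\alpha=1$). The key lemma will be a bilinear $L^2_{t,x}$ estimate of the form $\|V_\alpha(t)f_{N_1}\cdot V_\alpha(t)f_{N_2}\|_{L^2_{t,x}}\lesssim (\text{gain})\,\|f_{N_1}\|_{L^2}\|f_{N_2}\|_{L^2}$ derived from the change of variables $(\xi_1,\xi_2)\mapsto(\xi_1+\xi_2, H)$, whose Jacobian is controlled by $\big|\partial_{\xi_1}H - \partial_{\xi_2}H\big|$.

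Concretely, the order of steps I would carry out is: (1) record the linear estimates for $\psi(t)V_\alpha(t)u_0$ and the truncated Duhamel operator in $X^{s,b}_\alpha$, isolating the $T^{\theta}$ gain with $\theta = 1-b-b'>0$; (2) state and prove the core multilinear estimate $\|\partial_x(u^4)\|_{X^{s,-b'}_\alpha}\lesssim \|u\|_{X^{s,b}_\alpha}^4$ for $s>s_3$ and some admissible $(b,b')$, via dyadic decomposition, case analysis on which frequencies/modulations are largest, and the bilinear Fourier-restriction/Jacobian lemma — with the condition $s>s_3 = \frac12-\frac{\alpha}{3}$ emerging exactly from balancing the derivative loss against the dispersive gain in the worst (high-high-to-low) interaction; (3) combine (1) and (2) to show $\Phi$ maps a ball of radius $R\sim\|u_0\|_{H^s_x}$ in $X^{s,b}_\alpha$ to itself and is a contraction there once $T$ is chosen with $T^\theta R^3\ll 1$, giving existence; (4) the analogous difference estimate (using $u_1^4-u_2^4 = (u_1-u_2)(u_1^3+u_1^2u_2+u_1u_2^2+u_2^3)$) yields uniqueness in the ball and, by a standard continuity-in-$b$ and persistence argument, uniqueness in $X^{s,\frac12^+}_\alpha$ on $[0,T]$; the local uniform continuity of the data-to-solution map (claimed in Theorem \ref{Teorema do XSB da introdução}) then follows from the same difference estimate applied to two data $u_0,\tilde u_0$. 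The regularity threshold and the requirement $\alpha>1$ are both consequences of step (2), which is where essentially all the work lies.
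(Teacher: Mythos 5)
Your overall architecture matches the paper's: both reduce the theorem to the quadrilinear estimate $\|\partial_x(u^4)\|_{X^{s,b-1}_\alpha}\lesssim\|u\|^4_{X^{s,b}_\alpha}$ and then close a contraction in $X^{s,\frac12^+}_\alpha$. The divergence is in how that estimate is proved, and there is a concrete gap in your plan at the hardest point. The paper does not use dyadic Littlewood--Paley decomposition plus a bilinear $L^2_{t,x}$ Strichartz estimate; it uses the frequency-restricted estimates of Correia--Oliveira--Silva (Lemmas \ref{Lema 1} and \ref{Lema 2}), which reduce the multilinear bound to showing that, for each fixed $\xi$ (or fixed subset of frequencies) and each $\beta$, integrals of the weighted multiplier over the level sets $\{|\Phi-\beta|<M\}$ are $O(M^{1^-})$. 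In the non-resonant regions this is done by the change of variables $\xi_j\mapsto\Phi$, which is the integrated analogue of your Jacobian argument, so there the two routes are essentially equivalent.

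The gap is in the fully resonant regime $|\xi|\sim|\xi_1|\sim|\xi_2|\sim|\xi_3|\sim|\xi_4|$ (the paper's Subcase 1.2 and Case 2.2). Your key tool, a bilinear estimate with gain controlled by $|\partial_{\xi_1}H-\partial_{\xi_2}H|\sim\big||\xi_1|^\alpha-|\xi_2|^\alpha\big|$, degenerates exactly when the paired frequencies are comparable with the same sign, and in the all-comparable case no pairing of the four inputs avoids this degeneracy; a one-dimensional change of variables also fails because $\partial_{\xi_1}\Phi\ll|\xi|^\alpha$ there. The paper handles this by a genuinely two-dimensional argument: rescaling $\Phi=\xi|\xi|^\alpha P_{\eta_3}(\eta_1,\eta_2)$, checking that the critical point of $P$ is non-degenerate ($\det\nabla^2P\neq 0$), applying Morse's lemma to write $P=P(\mu)\pm\zeta_1^2\pm\zeta_2^2$, and invoking the two-dimensional level-set bound $\int\1_{|p^2\pm q^2-\beta|<M}\,dp\,dq\lesssim M^{1^-}N^{0^+}$ (Lemma \ref{lema 5}). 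Without some substitute for this step your case analysis does not close, and it is precisely this region that produces the threshold $s>s_3$ via the factor $|\xi|^{\frac52-3s}\left(M/|\xi|^{\alpha+1}\right)^{1^-}$. If you replace your bilinear lemma by this stationary-phase/Morse-lemma ingredient in the resonant region, the rest of your outline (linear $X^{s,b}$ estimates with the $T^{1-b-b'}$ gain, the difference estimate for uniqueness and continuous dependence) is standard and consistent with the paper.
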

In particular, since $ X^{s,\frac{1}{2}^+}_\alpha \hookrightarrow C_t H^s_x $, we obtain Theorem \ref{Teorema do XSB da introdução}.

Proving local well-posedness for \eqref{DGBOl2} using a Picard iteration scheme in Bourgain spaces $X^{s,b}_\alpha$ depends on deriving a multilinear estimate of the form
\begin{equation}\label{estimativa multilinear bourgain}
\|\partial_x(u^4)\|_{X^{s,b-1}_\alpha} \lesssim \|u\|_{X^{s,b}_\alpha}^4,
\end{equation}
for $b>1/2$. In order to obtain this estimate, we will use the techniques developed in Correia-Oliveira-Drummond in \cite{restrictionmethod}. To present the main results from \cite{restrictionmethod} needed in our work, we consider the generic dispersive PDE
$$
u_t - i L(D)u =N(u),
$$
where $L(D)$, is a linear (pseudo-) differential operator in the spatial variables given by a real Fourier symbol $L(\xi)$, \textit{i.e.}
$$
{L(D) f}(x):= \left(L(\xi) \widehat{f}(\xi)\right)^\vee(x),
$$
and $N(u)$ is a general nonlinearity defined by the multilinear form $N(u)= N(u,...,u)$ where
$$
\mathcal{F}\left[N(f_1,...,f_k)\right] (\xi) = \int_{\xi =\sum_{j=1}^k \xi_j} m(\xi_1, ...,\xi_k) \widehat{f_1}(\xi_1) \cdots \widehat{f_k}(\xi_k) \, d\xi_1 \cdots d\xi_{k-1}.
$$
It is also necessary to consider the  Bourgain space $X^{s,b}_{\tau=L(\xi)}$ related to the symbol $L(\xi)$ defined by the norm
$$
\|u\|_{X^{s,b}_{\tau=L(\xi)}}^2 := \int_\R \int_\R \langle  \tau- L(\xi)\rangle^{2b} \langle \xi \rangle^{2s} |\tilde{u}(\tau, \xi)|^2 \, d\tau d\xi.
$$
More precisely, in this conditions, we use the following lemmas.

\begin{lemma}[\hspace{0.02cm}\cite{restrictionmethod}, Lemma 1]\label{Lema 1}
 Suppose that  there exists $\gamma < -b^\prime$ such that, for any $M>1$,
\begin{equation}\label{FRM Cauchy-Schwarz}
\sup_{\xi} \int_{\xi=\sum_{j=1}^k \xi_j}  \frac{|m(\xi_1, ...,\xi_k)|^2 \langle \xi \rangle^{2s^\prime}}{\langle \xi_1 \rangle^{2s}\langle \xi_2 \rangle^{2s}\cdots\langle \xi_k \rangle^{2s}} \, \1_{|\Phi|<M}\, d\xi_1 \cdots d \xi_{k-1} \lesssim M^{2 \gamma},
\end{equation}
with $\Phi(\xi_1,...,\xi_k) := L(\xi) - \sum_{j=1}^k L(\xi_j)$. Then
\begin{equation}\label{estimativa |N|XSB}
\|N(u_1,...,u_k)\|_{X^{s^\prime,b^\prime}_{\tau=L(\xi)}} \lesssim \prod_{j=1}^k \|u_j\|_{X^{s,b}_{\tau=L(\xi)}}.
\end{equation}
\end{lemma}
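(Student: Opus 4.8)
The goal is to establish Lemma~\ref{Lema 1}, which reduces the multilinear $X^{s,b}$-estimate \eqref{estimativa |N|XSB} to the frequency-restricted estimate \eqref{FRM Cauchy-Schwarz}. The plan is to proceed by duality and Cauchy--Schwarz, exploiting the structure of the resonance function $\Phi$ and a Whitney-type dyadic decomposition of the modulation variable.

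First I would write the $X^{s',b'}_{\tau=L(\xi)}$-norm of $N(u_1,\dots,u_k)$ by duality: pairing with a test function $v$ of unit $X^{-s',-b'}_{\tau=L(\xi)}$-norm, one obtains an integral over the hyperplane $\tau = \sum \tau_j$, $\xi = \sum \xi_j$ of the product of the symbol $m(\xi_1,\dots,\xi_k)$, the weights, and $\widetilde{u_1},\dots,\widetilde{u_k},\widetilde{v}$. Next I would substitute $\sigma_j = \tau_j - L(\xi_j)$ and $\sigma = \tau - L(\xi)$, so that the modulations satisfy $\sigma - \sum \sigma_j = -\Phi(\xi_1,\dots,\xi_k)$; hence on the support of the integral at least one of $\langle\sigma\rangle, \langle\sigma_1\rangle,\dots,\langle\sigma_k\rangle$ is $\gtrsim \langle\Phi\rangle$. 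Dyadically decomposing in $M \sim \langle \Phi \rangle$ and in each modulation, and using $b,-b' > 1/2$ suitably (so that $\langle \sigma_j\rangle^{-b}$, $\langle\sigma\rangle^{b'}$ are summable/controllable), I would extract from the largest-modulation factor a power $M^{-\theta}$ for some $\theta > b'$, at the cost of replacing that factor's full weight by an $L^2$-in-$(\tau_j,\xi_j)$ factor — this is where the condition $\gamma < -b'$ is needed, to close the dyadic sum in $M$.

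Then the remaining spatial integral, after applying Cauchy--Schwarz in the frequency variables $(\xi_1,\dots,\xi_{k-1})$ (with $\xi = \sum\xi_j$ fixed and $\xi_k$ determined), is exactly bounded by the supremum appearing on the left side of \eqref{FRM Cauchy-Schwarz} — namely $\sup_\xi \int |m|^2 \langle\xi\rangle^{2s'} \prod \langle\xi_j\rangle^{-2s}\, \1_{|\Phi|<M}$ — times the product of the $L^2$-norms of the $\widetilde{u_j}$'s and $\widetilde v$. Invoking the hypothesis \eqref{FRM Cauchy-Schwarz} gives the factor $M^{2\gamma}$, and since $\gamma < -b'$ the geometric series $\sum_M M^{\gamma} M^{b'}$ converges, yielding the claimed bound $\prod_j \|u_j\|_{X^{s,b}_{\tau=L(\xi)}}$ after summing the modulation pieces. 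One should also separate the case where the maximal modulation is $\langle\sigma\rangle$ (the output modulation) from the cases where it is one of the input modulations $\langle\sigma_j\rangle$; in the former, one uses the $X^{-s',-b'}$-norm of $v$ to absorb it, in the latter one of the input norms — the bookkeeping is symmetric but must be done.

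The main obstacle I anticipate is the careful handling of the modulation weights so that, regardless of which factor carries the large modulation $\gtrsim M$, one is left with a clean $L^2 \times \cdots \times L^2$ estimate with \emph{all} the $\langle\xi_j\rangle^{s}$ and $\langle\xi\rangle^{s'}$ weights matching the integrand in \eqref{FRM Cauchy-Schwarz}; in particular, when the large modulation belongs to an input $u_j$, one must still recover its Sobolev weight $\langle\xi_j\rangle^{s}$, which forces the exchange $\langle\sigma_j\rangle^{-b}\langle\xi_j\rangle^{s} \leftrightarrow M^{-\theta}\cdot(L^2\text{-factor})$ to be done while keeping the weight — this is precisely what the normalization of \eqref{FRM Cauchy-Schwarz} with the weights \emph{inside} the integral is designed to permit. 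The rest is a routine, if somewhat tedious, application of Cauchy--Schwarz and summation of geometric series, and I would not grind through it in detail.
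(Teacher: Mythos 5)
The paper does not prove this lemma; it is imported verbatim from \cite{restrictionmethod} (Lemma 1 there), so there is no in-paper proof to compare against. Your sketch reconstructs essentially the argument of the cited source: duality, the resonance identity $\sigma-\sum_j\sigma_j=-\Phi$ forcing one modulation to dominate $\langle\Phi\rangle$, a dyadic decomposition in $M\sim\langle\Phi\rangle$, Cauchy--Schwarz in the frequency variables to surface the supremum in \eqref{FRM Cauchy-Schwarz}, and summation of $\sum_M M^{\gamma+b'}$ under $\gamma<-b'$ --- this is the right proof and the bookkeeping you flag (which factor carries the large modulation) is exactly the content one must check. The only imprecision is the clause ``$b,-b'>1/2$'': what is actually used is $2b>1$ (to integrate out the $\tau_j$-convolutions) together with $b'\le 0$ and $b+b'\ge 0$ (to trade the dominant modulation for $M^{b'}$); in the application $-b'$ is slightly \emph{less} than $1/2$.
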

\begin{lemma}[\hspace{0.02cm}\cite{restrictionmethod}, Lemma 2]\label{Lema 2}Here we adopt the convention that $\xi_0 = \xi$. 
Suppose that there exist $ \emptyset \neq A \subsetneq \{0,1,2,...,k\}$, $\M_j(\xi_1,...,\xi_k) \geq 0$, $j=1,2$, and $\gamma < -2b^\prime$ such that
$$
\M_1 \M_2 = \frac{|m(\xi_1, ...,\xi_k)|^2 \langle \xi \rangle^{2s^\prime}}{\langle \xi_1 \rangle^{2s}\cdots\langle \xi_k \rangle^{2s}}
$$
and, for any $M>1$,
\begin{equation}\label{estimativafrequenciarestrita}
\sup_{\xi_j \in A, \beta} \int_{\xi=\sum_{j=1}^4 \xi_j} \M_1 \1_{|\Phi -\beta|< M} d\xi_{j \notin A} + \sup_{\xi_j \notin A, \beta} \int_{\xi=\sum_{j=1}^4 \xi_j} \M_2 \1_{|\Phi -\beta|< M} d\xi_{j \in A}  \lesssim M^\gamma,
\end{equation}
with $\Phi(\xi_1,...,\xi_k) := L(\xi) - \sum_{j=1}^k L(\xi_j)$. Then the estimate \eqref{estimativa |N|XSB} follows.
\end{lemma}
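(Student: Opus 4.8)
The plan for Lemma \ref{Lema 2} is to run the restriction-norm argument behind Lemmas \ref{Lema 1}--\ref{Lema 2}: we will pass to the space-time Fourier side, dualize the estimate \eqref{estimativa |N|XSB} into a purely frequency-localized multilinear bound, use the resonance relation to trade one power of the largest modulation for a constraint of the form $\1_{|\Phi-\beta|<M}$, integrate out the dual time variables using $b>\tfrac12$, and finally invoke the hypothesis \eqref{estimativafrequenciarestrita} through a refined (bilinear) Cauchy--Schwarz adapted to the partition $\{A,A^{c}\}$ of $\{0,1,\dots,k\}$.

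\emph{Reduction.} With the convention $\xi_{0}=\xi=\xi_{1}+\cdots+\xi_{k}$, $\tau_{0}=\tau=\tau_{1}+\cdots+\tau_{k}$ and $\sigma_{j}:=\tau_{j}-L(\xi_{j})$, we normalize
\[
f_{j}:=\langle\sigma_{j}\rangle^{b}\langle\xi_{j}\rangle^{s}\,|\widetilde{u_{j}}|\quad(1\le j\le k),\qquad g:=\langle\sigma_{0}\rangle^{-b'}\langle\xi_{0}\rangle^{-s'}\,|\widetilde{v}|,
\]
so that $\|f_{j}\|_{L^{2}_{\tau,\xi}}=\|u_{j}\|_{X^{s,b}_{\tau=L(\xi)}}$ and $\|g\|_{L^{2}_{\tau,\xi}}=\|v\|_{X^{-s',-b'}_{\tau=L(\xi)}}$. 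By duality and Plancherel, \eqref{estimativa |N|XSB} is equivalent to
\[
\int_{\substack{\xi=\xi_{1}+\cdots+\xi_{k}\\ \tau=\tau_{1}+\cdots+\tau_{k}}}\frac{|m(\xi_{1},\dots,\xi_{k})|\,\langle\xi\rangle^{s'}\,\langle\sigma_{0}\rangle^{b'}}{\prod_{j=1}^{k}\langle\xi_{j}\rangle^{s}\langle\sigma_{j}\rangle^{b}}\;g\prod_{j=1}^{k}f_{j}\ \lesssim\ \|g\|_{L^{2}}\prod_{j=1}^{k}\|f_{j}\|_{L^{2}},
\]
where all functions may be taken non-negative. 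On the support of the integrand one has the resonance identity $\Phi(\xi_{1},\dots,\xi_{k})=\sigma_{1}+\cdots+\sigma_{k}-\sigma_{0}$, so the largest among $\langle\sigma_{0}\rangle,\dots,\langle\sigma_{k}\rangle$, whose dyadic size we denote $M$, satisfies $M\gtrsim\langle\Phi\rangle$.

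\emph{Time integration.} After localizing each $\langle\sigma_{j}\rangle$ dyadically --- so that $|\Phi|\lesssim M$ on each resulting piece --- we use $b>\tfrac12$ to integrate out the dual variables $\tau_{1},\dots,\tau_{k-1}$ one at a time, via the elementary convolution bound $\int_{\R}\langle t\rangle^{-2b}\langle t-a\rangle^{-2b}\,dt\lesssim\langle a\rangle^{-2b}$ and its analogue in which one factor is replaced by $\langle t\rangle^{2b'}$. Combined with a Cauchy--Schwarz in those variables, this reduces the left-hand side to a sum over $M$ of a \emph{purely spatial} $(k+1)$-linear expression, weighted by a negative power of $M$, in which the frequency cut-off has become $\1_{|\Phi-\beta|<M}$; here $\beta$ is precisely the fixed linear combination of the frozen modulations, which is the reason \eqref{estimativafrequenciarestrita} carries a supremum over $\beta$.

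\emph{Bilinear Cauchy--Schwarz.} Rather than a single Cauchy--Schwarz in all frequency variables --- which would require control of $\sup_{\xi}\int|m|^{2}\langle\xi\rangle^{2s'}\prod_{j}\langle\xi_{j}\rangle^{-2s}\,\1_{|\Phi|<M}$, i.e.\ hypothesis \eqref{FRM Cauchy-Schwarz} of Lemma \ref{Lema 1} --- we factor $|m|^{2}\langle\xi\rangle^{2s'}\prod_{j}\langle\xi_{j}\rangle^{-2s}=\M_{1}\M_{2}$ and split the $k+1$ functions $g,f_{1},\dots,f_{k}$ into the two blocks indexed by $A$ and by $A^{c}$. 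Applying Cauchy--Schwarz so that the $A$-block is paired with $\M_{1}$, the frequencies $\xi_{j}$ with $j\notin A$ serving as integration variables, and the $A^{c}$-block with $\M_{2}$, integrating over $\xi_{j}$ with $j\in A$, the two halves are dominated respectively by $\sup_{\xi_{j}\in A,\beta}\int\M_{1}\,\1_{|\Phi-\beta|<M}\,d\xi_{j\notin A}$ and $\sup_{\xi_{j}\notin A,\beta}\int\M_{2}\,\1_{|\Phi-\beta|<M}\,d\xi_{j\in A}$, each $\lesssim M^{\gamma}$ by \eqref{estimativafrequenciarestrita}, while the $L^{2}$-masses $\|g\|_{2},\|f_{1}\|_{2},\dots,\|f_{k}\|_{2}$ distribute cleanly between the blocks. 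Multiplying the two halves and summing the geometric series in $M$ against the negative modulation weight then yields \eqref{estimativa |N|XSB}, the series converging under the hypothesis $\gamma<-2b'$. The step we expect to be the main obstacle is precisely this last one, and the difficulty is combinatorial rather than analytic: one must keep exact track of which $\sigma_{j}$ are frozen and which are integrated when the space-time integral is split into the two spatial halves, so that (i) the cut-off on each side is genuinely the $\beta$-shifted indicator of \eqref{estimativafrequenciarestrita}, and (ii) the modulation factors recombine to leave exactly the power of $M$ for which $\gamma<-2b'$ is the sharp summability threshold; relatedly, the time integrations must be ordered so as to produce precisely one $\langle\Phi\rangle$'s worth of decay. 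Once Lemma \ref{Lema 2} (and the companion Lemma \ref{Lema 1}) is available, the multilinear estimate \eqref{estimativa multilinear bourgain}, and hence Theorems \ref{Teorema em XSB} and \ref{Teorema do XSB da introdução}, follows by verifying \eqref{estimativafrequenciarestrita} for $L(\xi)=\xi|\xi|^{\alpha}$, $m\equiv\xi$, $s'=s$ and $b'=b-1$.
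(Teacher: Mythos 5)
The paper does not prove this lemma: it is imported verbatim from \cite{restrictionmethod} (Lemma 2 there) and used as a black box, so there is no in-paper argument to compare against. Your outline reconstructs the strategy of the cited reference correctly: dualize \eqref{estimativa |N|XSB} via Plancherel, use the resonance identity $\Phi=\sum_{j\ge1}\sigma_j-\sigma_0$ to localize $|\Phi-\beta|<M$ after freezing/integrating the dual time variables with $b>\tfrac12$, and then apply Cauchy--Schwarz with the factorization $\M_1\M_2$ split along the partition $\{A,A^c\}$, so that each half is controlled by one of the two suprema in \eqref{estimativafrequenciarestrita}. This is the right architecture, and your identification of $\beta$ as the frozen combination of modulations explains correctly why the hypothesis carries a supremum over $\beta$.

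The one substantive gap is the step you yourself flag: you do not carry out the bookkeeping that produces the threshold $\gamma<-2b'$ rather than the $\gamma<-b'$ of Lemma \ref{Lema 1}. Note the normalizations: in Lemma \ref{Lema 1} the single integral of the full squared weight is $\lesssim M^{2\gamma}$ (so the Cauchy--Schwarz factor is $M^{\gamma}$ with $\gamma<-b'$), whereas in Lemma \ref{Lema 2} each of the two integrals is only $\lesssim M^{\gamma}$, so a naive product of square roots again yields $M^{\gamma}$ but now the summability condition is the strictly weaker $\gamma<-2b'$. The gain of a full extra factor $M^{-b'}$ comes from the fact that the bilinear splitting lets one exploit the modulation localization \emph{in each factor separately} (each half retains its own $\1_{|\Phi-\beta|<M}$ and its own share of the $\langle\sigma_j\rangle$ weights), and making this precise is exactly the content of the proof in \cite{restrictionmethod}. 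As written, your argument would only justify the condition $\gamma<-b'$, which is not what the lemma asserts; to recover $\gamma<-2b'$ you must order the dyadic decomposition and the time integrations so that the factor $M^{b'}$ is extracted twice, once on each side of the Cauchy--Schwarz. Since the lemma is cited rather than proved in this paper, this does not affect the paper's logic, but your sketch should not be regarded as a complete proof of the stated threshold.
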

\begin{remark}
The notation $d\xi_{j \notin A}$ and $d\xi_{j \in A}$ indicates integration over all variables $\xi_j$ not in, and in, the set $A$, respectively.
\end{remark}
\textit{Proof of Theorem \ref{Teorema em XSB}}: As we commented before we will focus in the proof of estimate \eqref{estimativa multilinear bourgain}. Therefore, let $b = \frac{1}{2}^+ $, $b' = (b - 1)^+ $, and $ s = s' $.  In terms of the Lemmas \ref{Lema 1} and \ref{Lema 2} we have
$$
L(\xi):= \xi|\xi|^\alpha, \quad \text{for} \,\, 1< \alpha \leq 2,
$$
with the phase function $\Phi= \xi|\xi|^\alpha -\sum_{j=1}^4 \xi_j|\xi_j|^\alpha$. The nonlinearity, expressed in the multilinear terms, is defined by
$$
\mathcal{F}\left[N(u_1,u_2,u_3,u_4)\right] (\xi) = -i \xi \int_{\xi =\sum_{j=1}^4 \xi_j}  \widehat{u_1}(\xi_1) \widehat{u_2}(\xi_2) \widehat{u_3}(\xi_3)  \widehat{u_4}(\xi_4) \, d\xi_1 d \xi_2 d\xi_3,
$$
and $X^{s,b}_{\tau= \xi |\xi|^\alpha}\equiv X^{s,b}_\alpha$. We demonstrate that the estimates \eqref{FRM Cauchy-Schwarz} and \eqref{estimativafrequenciarestrita} holds for the problem \eqref{DGBOl2} by employing the techniques introduced in \cite{restrictionmethod} and \cite{MR4097938}. 

For that, without loss of generality, by symmetry, we can assume $|\xi| \geq |\xi_1| \geq |\xi_2| \geq |\xi_3| \geq |\xi_4|$. If $|\xi| \leq 1$ we have
$$
\frac{|\xi|^2 \langle \xi \rangle^{2s}}{\langle \xi_1 \rangle^{2s}\langle \xi_2 \rangle^{2s}\langle \xi_3 \rangle^{2s}\langle \xi_4 \rangle^{2s}} \lesssim 1.
$$
Since $\{(\xi_1,\xi_2,\xi_3,\xi_4) \in \R^4: \xi = \sum_{j=1}^4 \xi_j \}$ is a bounded region whenever $|\xi| \leq 1$, then
$$
    \int_{\xi=\sum_{j=1}^4 \xi_j}  \frac{|\xi|^2 \langle \xi \rangle^{2s}}{\langle \xi_1 \rangle^{2s}\langle \xi_2 \rangle^{2s}\langle \xi_3 \rangle^{2s}\langle \xi_4 \rangle^{2s}} \, \1_{|\Phi|<M}\, d\xi_1 d\xi_2 d \xi_3 \lesssim 1< M^{1^-}.
$$
By applying Lemma \ref{Lema 1}, the estimate \eqref{estimativa multilinear bourgain} follows in this region.

Now, consider $|\xi|>1$. In this scenario we have
$$
|\xi_1| \sim |\xi|\sim \langle \xi \rangle.
$$
We split the proof in some cases.

\textbf{Case 1.} The scenario $|\xi| \simeq |\xi_1| \simeq |\xi_2| \simeq |\xi_3| \gg |\xi_4|$ does not occur: In this case, consider $A=\{1,2,4\}$ and define
$$
\M_1(\xi_1,\xi_2,\xi_3,\xi_4) := \frac{|\xi| \langle \xi \rangle^{s+1/2}}{\langle \xi_1 \rangle^{s+1/2}\langle \xi_2 \rangle^{s+1/2}\langle \xi_3 \rangle^{s}\langle \xi_4 \rangle^{s}}
$$
and
$$
\M_2(\xi_1,\xi_2,\xi_3,\xi_4) := \frac{|\xi| \langle \xi \rangle^{s-1/2}}{\langle \xi_1 \rangle^{s-1/2}\langle \xi_2 \rangle^{s-1/2}\langle \xi_3 \rangle^{s}\langle \xi_4 \rangle^{s}}.
$$
We begin by estimating the term
\begin{align*}
I_1(\xi, \xi_3) &=  \int \M_1 \1_{|\Phi -\beta|< M}\,  d\xi_1 d\xi_2 \\
 &=  \int  \frac{|\xi| \langle \xi \rangle^{s+1/2}}{\langle \xi_1 \rangle^{s+1/2}\langle \xi_2 \rangle^{s+1/2}\langle \xi_3 \rangle^{s}\langle \xi_4 \rangle^{s}} \1_{|\Phi -\beta|< M}\,  d\xi_1 d\xi_2,
\end{align*}
with $\xi, \xi_3$ fixed and $\xi_4 = \xi -\xi_1 - \xi_2 -\xi_3$. We need to consider two additional subcases.

\textbf{Subcase 1.1.} $|\partial_{\xi_1} \Phi| \gtrsim |\xi|^{\alpha}$: Suppose first that $s \leq 0$. Then, since $|\xi_2| \geq |\xi_3| \geq |\xi_4|$ and $|\xi| \sim |\xi_1|$, by the Hölder inequality we obtain
\begin{align*}
I_1 &=  \int  \frac{|\xi| \langle \xi \rangle^{s+1/2}}{\langle \xi_1 \rangle^{s+1/2}\langle \xi_2 \rangle^{s+1/2}\langle \xi_3 \rangle^{s}\langle \xi_4 \rangle^{s}} \1_{|\Phi -\beta|< M}\,  d\xi_1 d\xi_2 \\
&\lesssim  \int  \frac{|\xi| }{\langle \xi_2 \rangle^{s+1/2}\langle \xi_3 \rangle^{s}\langle \xi_4 \rangle^{s}} \1_{|\Phi -\beta|< M}\,  d\xi_1 d\xi_2 \\
&\lesssim  \int  \frac{|\xi| }{\langle \xi_2 \rangle^{3s+1/2}} \1_{|\Phi -\beta|< M}\,  d\xi_1 d\xi_2 \\
&\lesssim  \left( \int  \frac{|\xi|^{1^+} }{\langle \xi_2 \rangle^{3s+1/2^{-}}} \1_{|\Phi -\beta|< M}\,  d\xi_1 d\xi_2 \right)^{1^-}.
\end{align*}
By performing the variable change $\xi_1 \mapsto \Phi$, we obtain
\begin{align*}
I_1 &\lesssim \left( \int  \frac{|\xi|^{1^+} }{\langle \xi_2 \rangle^{3s+1/2^{-}}} \1_{|\Phi -\beta|< M}\,  \frac{d\Phi}{|\partial_{\xi_1} \Phi|} d\xi_2 \right)^{1^-}\\
&\lesssim \left( \int  \frac{|\xi|^{(1-\alpha)^+} }{\langle \xi_2 \rangle^{3s+1/2^{-}} } \1_{|\Phi -\beta|< M}\,  d\Phi d\xi_2 \right)^{1^-} \\
&\lesssim \left( \int  \frac{|\xi|^{(1-\alpha)^+}}{\langle \xi_2 \rangle^{3s+1/2^{-}}} \left[\int\1_{|\Phi -\beta|< M}\,  d\Phi \right] d\xi_2 \right)^{1^-} \\
&\lesssim M^{1^-} \cdot \left( \int  \frac{|\xi|^{(1-\alpha)^+}}{\langle \xi_2 \rangle^{3s+1/2^{-}}} d\xi_2 \right)^{1^-} .
\end{align*}
Now consider $s > 0$. For this case, since that $\langle \xi_3 \rangle^{-s}\langle \xi_4 \rangle^{-s} \lesssim 1$ we have
\begin{align*}
I_1 &=  \int  \frac{|\xi| \langle \xi \rangle^{s+1/2}}{\langle \xi_1 \rangle^{s+1/2}\langle \xi_2 \rangle^{s+1/2}\langle \xi_3 \rangle^{s}\langle \xi_4 \rangle^{s}} \1_{|\Phi -\beta|< M}\,  d\xi_1 d\xi_2 \\
&\lesssim  \int  \frac{|\xi| }{\langle \xi_2 \rangle^{s+1/2}\langle \xi_3 \rangle^{s}\langle \xi_4 \rangle^{s}} \1_{|\Phi -\beta|< M}\,  d\xi_1 d\xi_2 \\
&\lesssim  \int  \frac{|\xi| }{\langle \xi_2 \rangle^{s+1/2}} \1_{|\Phi -\beta|< M}\,  d\xi_1 d\xi_2 \\
&\lesssim  \left( \int  \frac{|\xi|^{1^+} }{\langle \xi_2 \rangle^{s+1/2^{-}}} \1_{|\Phi -\beta|< M}\,  d\xi_1 d\xi_2 \right)^{1^-}.
\end{align*}
Again, by making the variable change $\xi_1 \mapsto \Phi$, we obtain
\begin{align*}
I_1 &\lesssim \left( \int  \frac{|\xi|^{1^+} }{\langle \xi_2 \rangle^{s+1/2^{-}}} \1_{|\Phi -\beta|< M}\,  \frac{d\Phi}{|\partial_{\xi_1} \Phi|} d\xi_2 \right)^{1^-}\\
&\lesssim \left( \int  \frac{|\xi|^{(1-\alpha)^+} }{\langle \xi_2 \rangle^{s+1/2^{-}} } \1_{|\Phi -\beta|< M}\,  d\Phi d\xi_2 \right)^{1^-} \\
&\lesssim \left( \int  \frac{|\xi|^{(1-\alpha)^+}}{\langle \xi_2 \rangle^{s+1/2^{-}}} \left[\int\1_{|\Phi -\beta|< M}\,  d\Phi \right] d\xi_2 \right)^{1^-} \\
&\lesssim M^{1^-} \cdot \left( \int  \frac{|\xi|^{(1-\alpha)^+}}{\langle \xi_2 \rangle^{s+1/2^{-}}} d\xi_2 \right)^{1^-} .
\end{align*}
Therefore, in both cases, since $1<\alpha \leq 2$ and $s> s_3$
$$
\sup_{\xi, \xi_3, \beta} I_1(\xi, \xi_3) \lesssim M^{1^-}.
$$

\textbf{Subcase 1.2.}  $|\partial_{\xi_1} \Phi| \ll |\xi|^{\alpha}$: Since that
$$
\partial_{\xi_1} \Phi \simeq |\xi_4|^{\alpha} - |\xi_1|^{\alpha},
$$
this implies that
$$
||\xi_4|^{\alpha} - |\xi_1|^{\alpha}|\simeq|\partial_{\xi_1} \Phi| \ll |\xi|^{\alpha} \sim |\xi_1|^{\alpha},
$$
that is, $|\xi_1| \simeq |\xi_4|$, which, by ordering, ensures that
$$
|\xi_i| \simeq |\xi_j|,\,\,\, \text{for all}\,\, \, i,j=1,2,3,4.
$$
Note that we can write
$$
\Phi(\xi_1,\xi_2,\xi_3,\xi_4)= \xi|\xi|^{\alpha} P_{\eta_3}(\eta_1,\eta_2)
$$
where $P_{\eta_3}(\eta_1,\eta_2)= 1 - \sum_{j=1}^4 \eta_j|\eta_j|^{\alpha}$, $\eta_j= \frac{\xi_j}{\xi}$ and $\eta_4= 1 -\eta_1-\eta_2-\eta_3$. Since $\eta_3$ is fixed we have 
$$
\nabla P_{\eta_3}(\eta_1,\eta_2) = (\alpha+1
)\left(|\eta_4|^{\alpha} -|\eta_1|^{\alpha},|\eta_4|^{\alpha} -|\eta_2|^{\alpha}\right)
$$
and
$$
\nabla^2P_{\eta_3}(\eta_1,\eta_2)= -\alpha(\alpha+1)\left[
    \begin{tabular}{cc}
        $\frac{\eta_4}{{|\eta_4|^{2-\alpha}}}+ \frac{\eta_1}{{|\eta_1|^{2-\alpha}}}$ &$\frac{\eta_4}{{|\eta_4|^{2-\alpha}}}$ \\
        $\frac{\eta_4}{{|\eta_4|^{2-\alpha}}}$& $\frac{\eta_4}{{|\eta_4|^{2-\alpha}}}+ \frac{\eta_2}{{|\eta_2|^{2-\alpha}}}$ \\
    \end{tabular}
\right].
$$
Observe that, in this case, we have $|\eta_j \pm 1| \ll 1$ for all $j=1,2,3,4$. Additionally, it holds that
$$
\nabla P_{\pm 1}(\pm 1, \pm 1)= \vec{0},
$$
but 
$$
\det \left(\nabla^2 P_{\pm 1}\right)(\pm 1, \pm 1)\neq 0.
$$

Therefore, by Morse's Lemma (see, for example, \cite{MR2304165}, Lemma C.6.1), for every $\eta_3 \simeq \pm 1$, there exists a unique critical point $\mu(\eta_3):= (\mu_1(\eta_3),\mu_2(\eta_3))$ and a change of variables $(\eta_1,\eta_2) \mapsto (\zeta_1,\zeta_2)$ such that
$$
P_{\eta_3}(\eta_1,\eta_2,\eta_3,\eta_4) = P_{\eta_3}(\mu(\eta_3)) \pm \zeta_1^2 \pm \zeta_2^2.
$$
Then, applying some changes of variables
\begin{align*}
I_1 &\lesssim  \int  \frac{|\xi| \langle \xi \rangle^{s+1/2}}{\langle \xi_1 \rangle^{s+1/2}\langle \xi_2 \rangle^{s+1/2}\langle \xi_3 \rangle^{s}\langle \xi_4 \rangle^{s}} \1_{|\Phi -\beta|< M}\,  d\xi_1 d\xi_2\\
 &\lesssim \int  |\xi|^ {\frac{1}{2} - 3s} \1_{|\Phi -\beta|< M}\,  d\xi_1 d\xi_2\\
 &\lesssim \int  |\xi|^ {\frac{5}{2} - 3s} \1_{|\xi|\xi|^{\alpha}P_{\eta_3}(\eta_1,\eta_2) -\beta|< M}\,  d\eta_1 d\eta_2\\
 &\lesssim \int  |\xi|^ {\frac{5}{2} - 3s} \1_{|P_{\eta_3}(\mu(\eta_3)) \pm \zeta_1^2 \pm \zeta_2^2 -\beta|< \left(\frac{M}{\xi|\xi|^{\alpha}}\right)}\,  d\zeta_1 d\zeta_2.
\end{align*}
To proceed, we need the following auxiliary lemma.
\begin{lemma}[\hspace{-0.02cm}\cite{restrictionmethod}, Lemma 5] \label{lema 5}

For $N,M>0$ and $\beta \in \R$ fixed
$$
\int_{|p|,|q|<N} \1_{|p^2 \pm q^2 - \beta|<M} \, dp dq \lesssim M^{1^-}N^{0^+}.
$$
\end{lemma}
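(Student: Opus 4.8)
The plan is to estimate directly the two‑dimensional measure of the sublevel set $R_\pm := \{(p,q):|p|,|q|<N,\ |p^2\pm q^2-\beta|<M\}$, treating the two signs separately, since the level curves $\{p^2+q^2=\beta\}$ (circles) and $\{p^2-q^2=\beta\}$ (hyperbolas) behave very differently.

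\textbf{The choice $p^2+q^2$.} Here I would argue purely geometrically. The set $\{|p^2+q^2-\beta|<M\}$ is a disk of radius $\le\sqrt{2M}$ when $\beta\le M$, and an annulus with radii $\sqrt{\beta\pm M}$ when $\beta> M$; in either case its area is $\le 2\pi M$. Since it is also contained in the square $(-N,N)^2$, we get $|R_+|\le\min(2\pi M,\,4N^2)$, and the elementary inequality $\min(a,b)\le a^{1-\varepsilon}b^{\varepsilon}$ yields $|R_+|\lesssim_{\varepsilon} M^{1-\varepsilon}N^{2\varepsilon}=M^{1^-}N^{0^+}$.

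\textbf{The choice $p^2-q^2$.} First reduce to $\beta\ge0$ via the reflection $(p,q)\mapsto(q,p)$, which replaces $\beta$ by $-\beta$. Then change variables $u=p-q$, $v=p+q$ (Jacobian $\tfrac12$), under which $p^2-q^2=uv$ and $(-N,N)^2$ maps into $(-2N,2N)^2$; hence $|R_-|\le\tfrac12\,\big|\{(u,v)\in(-2N,2N)^2:\ |uv-\beta|<M\}\big|$. Integrating in $u$ by Fubini, for each fixed $u\ne 0$ the $v$‑section is an interval of length $\le\min\!\big(4N,\,2M/|u|\big)$, so $|R_-|\le\tfrac12\int_{-2N}^{2N}\min\!\big(4N,\,2M/|u|\big)\,du$. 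A direct computation of this one‑dimensional integral gives a bound $\lesssim M\big(1+\log(N^2/M)\big)$ when $M\le N^2$, and $\lesssim N^2$ when $M\ge N^2$. In the first regime one absorbs the logarithm using $1+\log t\lesssim_{\varepsilon} t^{\varepsilon}$ for $t\ge1$ to obtain $\lesssim_{\varepsilon} M^{1-\varepsilon}N^{2\varepsilon}$; in the second regime $N^2\le M^{1-\varepsilon}(N^2)^{\varepsilon}=M^{1-\varepsilon}N^{2\varepsilon}$. Either way $|R_-|\lesssim_{\varepsilon} M^{1-\varepsilon}N^{2\varepsilon}$.

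The computation is elementary throughout; the one point that needs care — and the reason the conclusion carries $M^{1^-}$ rather than $M$ — is the logarithmic blow‑up of $\int_0^{2N}\min(4N,2M/u)\,du$ as $u\to0^+$, i.e. the contribution of the two branches $p=\pm q$ near the origin where the hyperbola degenerates. This logarithm is exactly what the arbitrarily small gain $N^{0^+}$, together with the loss $M^{0^-}$, is there to absorb; the complementary regime $M\gtrsim N^2$, where the sublevel set is essentially the whole square, is handled by the trivial bound $|R_\pm|\le 4N^2$. An alternative route for the hyperbolic case is the one‑variable sublevel estimate $|\{q:|q|<N,\ |q^2-y|<M\}|\lesssim\sqrt M$, but by itself it only yields $|R_-|\lesssim N\sqrt M$, which is too weak for $N\gg\sqrt M$; the $uv$‑substitution is what recovers the sharp exponent, so I would organize the write‑up around it.
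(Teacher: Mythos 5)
Your proof is correct. Note first that the paper itself does not prove this lemma at all: it is imported verbatim from the cited reference (Correia--Oliveira--Silva, Lemma~5) with no argument given, so there is nothing in the present paper to compare against; your write-up is a complete, self-contained replacement. Both halves check out. For $p^2+q^2$ the annulus/disk area bound $\le 2\pi M$ combined with the trivial bound $4N^2$ and $\min(a,b)\le a^{1-\varepsilon}b^{\varepsilon}$ is exactly right. For $p^2-q^2$ the substitution $u=p-q$, $v=p+q$ reduces the problem to $|\{|uv-\beta|<M\}\cap(-2N,2N)^2|$, and the Fubini computation $\int_0^{2N}\min(4N,2M/u)\,du\lesssim M\bigl(1+\log(4N^2/M)\bigr)$ for $M\lesssim N^2$ (with the trivial bound $N^2$ otherwise) is correct; absorbing the logarithm via $1+\log t\lesssim_\varepsilon t^\varepsilon$ produces $M^{1-\varepsilon}N^{2\varepsilon}$, which is what the notation $M^{1^-}N^{0^+}$ means. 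Your closing remarks are also on point: the logarithmic degeneracy along the asymptotes $p=\pm q$ is precisely why the stated bound cannot be improved to $M^1 N^0$, and the naive one-variable sublevel bound $|\{q:|q^2-y|<M\}|\lesssim\sqrt M$ would only give $N\sqrt M$, which is insufficient. I have no corrections.
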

\qed

Therefore, by using Lemma \ref{lema 5} and  $s \geq s_3^+$ we obtain
\begin{align*}
\sup_{\xi,\xi_3, \beta} I_1(\xi, \xi_3) &\lesssim  \sup_{\xi}  |\xi|^ {\frac{5}{2} - 3s}\left(\frac{M}{\xi|\xi|^{\alpha}}\right)^{1^-}\\
&\lesssim  \sup_{\xi} |\xi|^ { \left(\frac{3}{2}- \alpha\right)^+ - 3s} M^{1^-}\\
&\lesssim  M^{1^-}.
\end{align*}
The estimates for 
$$
I_2(\xi_1,\xi_2,\xi_4) :=  \int \M_2 \1_{|\Phi -\beta|< M}\,  d\xi_1 d\xi_2 =  \int  \frac{|\xi| \langle \xi \rangle^{s-1/2}}{\langle \xi_1 \rangle^{s-1/2}\langle \xi_2 \rangle^{s-1/2}\langle \xi_3 \rangle^{s}\langle \xi_4 \rangle^{s}} \1_{|\Phi -\beta|< M}\,  d\xi,
$$
follow a similar argument.

\textbf{Case 2.} $|\xi| \simeq |\xi_1| \simeq |\xi_2| \simeq |\xi_3| \gg |\xi_4|$: Now, consider $A^\prime=\{0,4\}$, $\xi_3= \xi- \xi_1-\xi_2-\xi_4$ and, since $|\xi|>1$, we define
$$
\mathcal{N}_1(\xi_1,\xi_2,\xi_3,\xi_4) := \frac{ \langle \xi \rangle^{s+3/2}}{\langle \xi_1 \rangle^{s}\langle \xi_2 \rangle^{s}\langle \xi_3 \rangle^{s}\langle \xi_4 \rangle^{s}} \lesssim |\xi_3|^{3/2-3s}
$$
and
$$
\mathcal{N}_2(\xi_1,\xi_2,\xi_3,\xi_4) := \frac{\langle \xi \rangle^{s+ 1/2}}{\langle \xi_1 \rangle^{s}\langle \xi_2 \rangle^{s}\langle \xi_3 \rangle^{s}\langle \xi_4 \rangle^{s}} \lesssim |\xi|^{1/2-3s}.
$$
Thus, it is sufficient to estimate
\begin{equation}\label{I1}
\sup_{\xi_1,\xi_2,\xi_3, \beta} J_1(\xi_1,\xi_2,\xi_3) := \sup_{\xi_1,\xi_2,\xi_3, \beta} \int |\xi_3|^{3/2-3s} \1_{|\Phi -\beta|< M}\,  d\xi_4
\end{equation}
and
\begin{equation}\label{I2}
\sup_{\xi,\xi_4, \beta} J_2(\xi,\xi_4):= \sup_{\xi,\xi_4, \beta} \int |\xi|^{1/2-3s} \1_{|\Phi -\beta|< M}\,  d\xi_1 d\xi_2.
\end{equation}
Begin with \eqref{I1}. Note that, since $|\partial_{\xi_4} \Phi| \gtrsim |\xi_3|^{\alpha}$, applying the Hölder inequality and  performing the change of variables $\xi_4 \mapsto \Phi$
\begin{align*}
J_1=  \int |\xi_3|^{3/2-3s} \1_{|\Phi -\beta|< M}\,  d\xi_4 &\lesssim \left( \int |\xi_3|^{3/2^+-3s} \1_{|\Phi -\beta|< M}\,  d\xi_4 \right)^{1^-}\\
& \lesssim  \left( \int |\xi_3|^{3/2^+-3s} \1_{|\Phi -\beta|< M}\,  \frac{d \Phi}{|\partial_{\xi_4}\Phi|}\right)^{1^-} \\
& \lesssim  \left( \int |\xi_3|^{\left(3/2- \alpha\right)^+-3s} \1_{|\Phi -\beta|< M}\,  {d \Phi}\right)^{1^-}.
\end{align*}
Then, since $s> s_3^+$
\begin{align*}
\sup_{\xi_1,\xi_2,\xi_3, \beta} J_1(\xi_1,\xi_2,\xi_3) &\lesssim  \left( \int  \1_{|\Phi -\beta|< M}\,  {d \Phi}\right)^{1^-}\\
&\lesssim M^{1^-}.
\end{align*}
For \eqref{I2}, we will proceed similarly as in \textbf{Case 1.2}, i.e., write
$$
\Phi(\xi_1,\xi_2,\xi_3,\xi_4)= \xi|\xi|^{\alpha} Q_{\eta_4}(\eta_1,\eta_2),
$$
with $Q_{\eta_4}(\eta_1,\eta_2)= 1 - \sum_{j=1}^4 \eta_j|\eta_j|^{\alpha}$, $\eta_j= \frac{\xi_j}{\xi}$ and $\eta_3= 1 -\eta_1-\eta_2-\eta_4$. Since $\eta_4$ is fixed, we have
$$
\nabla^2Q_{\eta_4}(\eta_1,\eta_2)\simeq \left[
   \begin{tabular}{cc}
        $\frac{\eta_3}{{|\eta_3|^{2-\alpha}}}+ \frac{\eta_1}{{|\eta_1|^{2-\alpha}}}$ &$\frac{\eta_3}{{|\eta_3|^{2-\alpha}}}$ \\
        $\frac{\eta_3}{{|\eta_3|^{2-\alpha}}}$& $\frac{\eta_3}{{|\eta_3|^{2-\alpha}}}+ \frac{\eta_2}{{|\eta_2|^{2-\alpha}}}$ \\
    \end{tabular}
\right].
$$
Therefore, since $\eta_1 \simeq \eta_2 \simeq 1$, $\eta_3 \simeq -1$ we have
$$
\det \left(\nabla^2 Q_{\eta_4}\right) \simeq -1.
$$
Thus, is sufficiently  consider two cases:

\textbf{Case 2.1.} $|\nabla Q_{\eta_4}| \gtrsim 1$ \textit{(Non-Stationary region)}: In this scenario, being away from a stationary point, we can, without loss of generality, assume $|\partial_{\eta_1} Q_{\eta_4}| \gtrsim 1$. Thus, after performing some variable changes
\begin{align*}
J_2 &= \int |\xi|^{1/2-3s} \1_{|\Phi -\beta|< M}\,  d\xi_1 d\xi_2\\
&= \int_{\eta_2 \simeq 1} |\xi|^{5/2-3s} \1_{|\xi|\xi|^{\alpha}Q_{\eta_4} -\beta|< M}\,  d\eta_1 d\eta_2\\
&\lesssim \left(\int_{\eta_2 \simeq 1} |\xi|^{5/2^+-3s} \1_{|\xi|\xi|^{\alpha}Q_{\eta_4} -\beta|< M}\,  d\eta_1 d\eta_2 \right)^{1^-}\\
&\lesssim \left(\int_{\eta_2 \simeq 1} |\xi|^{5/2^+-3s} \1_{|\xi|\xi|^{\alpha}Q_{\eta_4} -\beta|< M}\,  \frac{dQ_{\eta_4}}{|\partial_{\eta_1} Q_{\eta_4}| } d\eta_2 \right)^{1^-}\\
&\lesssim \left(\int_{\eta_2 \simeq 1} |\xi|^{5/2^+-3s} \1_{|\xi|\xi|^{\alpha}Q_{\eta_4} -\beta|< M}\,  {dQ_{\eta_4}} d\eta_2 \right)^{1^-}.
\end{align*}
Then, by applying Lemma \ref{lema 5} once more
\begin{align*}
\sup_{\xi,\xi_4, \beta} J_2(\xi,\xi_4) &\lesssim  \sup_{\xi}  |\xi|^ {\frac{5}{2} - 3s}\left(\frac{M}{\xi|\xi|^{\alpha}}\right)^{1^-}\\
&\lesssim  \sup_{\xi} |\xi|^ { \left(\frac{3}{2}- \alpha\right)^+ - 3s} M^{1^-}\\
&\lesssim  M^{1^-}.
\end{align*}

\textbf{Case 2.2.} $|\nabla Q_{\eta_4}| \ll 1$ \textit{(Stationary region)}: Applying Morse's Lemma, there exists a change of variable $(\eta_1,\eta_2) \mapsto (\zeta_1,\zeta_2)$ such that
$$
Q_{\eta_4}(\eta_1,\eta_2) = Q_{\eta_4}(\zeta) \pm \zeta_1^2 \pm \zeta_2^2
$$
where $\zeta \in \R^2$ is a non-degenerate critical point of $Q$. Then
\begin{align*}
J_2 &= \int |\xi|^{1/2-3s} \1_{|\Phi -\beta|< M}\,  d\xi_1 d\xi_2\\
&= \int_{\eta_2 \simeq 1} |\xi|^{5/2-3s} \1_{|\xi|\xi|^{\alpha}Q_{\eta_4} -\beta|< M}\,  d\eta_1 d\eta_2\\
&\lesssim \left(\int_{\eta_2 \simeq 1} |\xi|^{5/2^+-3s} \1_{|\xi|\xi|^{\alpha}Q_{\eta_4} -\beta|< M}\,  d\eta_1 d\eta_2 \right)^{1^-}\\
&\lesssim \left(\int |\xi|^{5/2^+-3s} \1_{|(Q_{\eta_4}(\zeta) \pm \zeta_1^2 \pm \zeta_2^2) -\beta|< \frac{M}{\xi|\xi|^{\alpha}}}\,  d\zeta_1 d\zeta_2 \right)^{1^-}.
\end{align*}
Applying the Lemma \ref{lema 5} once again and recalling that $s \geq s_3^+$
\begin{align*}
\sup_{\xi,\xi_4, \beta} J_2(\xi,\xi_4) \lesssim  M^{1^-}.
\end{align*}
By these estimates and applying Lemma \ref{Lema 2}, the estimate \eqref{estimativa multilinear bourgain} follows in this region.

\qed

Note that, if we consider $s^\prime = s + \varepsilon $ with $\varepsilon<\min \left\{3s+\alpha - \frac{3}{2},s+1/2,\alpha-1\right\}$ by following the same approach used in the proof of Theorem \ref{Teorema em XSB}, we obtain
\begin{equation}\label{estimativa multilinear com epsilon}
   \| \partial_x(u_1u_2u_3u_4)\|_{X^{s+\varepsilon,-\frac{1}{2}^-}_\alpha} \lesssim \|u_1\|_{X^{s,\frac{1}{2}^+}_\alpha}\|u_2\|_{X^{s,\frac{1}{2}^+}_\alpha}\|u_3\|_{X^{s,\frac{1}{2}^+}_\alpha}\|u_4\|_{X^{s,\frac{1}{2}^+}_\alpha}, 
\end{equation}

for any regularity $s>s_3$. Using the estimate \eqref{estimativa multilinear com epsilon}, Theorem \ref{efeito regularizante nao linear} follows. \qed

\subsection{Global theory for negative indices}
This subsection is dedicated to proving Theorem \ref{BOA COLOCAÇAO GLOBAL I METHOD}. Therefore, from this point forward, we will consider $\alpha \in (\frac{3}{2},2]$. In this section is important to consider the restriction norm
$$
\|f\|_{X^{s,b}_\alpha(T)}:= \left\{ \|g\|_{X^{s,b}_\alpha}:g_{|_{[0,T]\times \R} = f } \right\}.
$$
Henceforth, to the sake of clarity, we will simplify the notation by writing $\|f\|_{X^{s,b}_\alpha(T)}$ as $\|f\|_{X^{s,b}_\alpha}$.

To prove Theorem \ref{BOA COLOCAÇAO GLOBAL I METHOD}, we  follow the argument in \cite{MR1824796} and \cite{MR2372424}. For that, given $N \gg 1$ and $s<0$, define the multiplier
\begin{equation*}
	m_N(\xi):= \left\{
	\begin{array}{l}
	1, \quad |\xi|<N\\
	\left(N^{-1}|\xi|\right)^s, \quad |\xi|>N,
	\end{array}
	\right.
\end{equation*}
and the operator
$$
I_N f(x):= \left( m_N(\xi) \widehat{f}(\xi)\right)^\vee(x).
$$
Note that, for fixed $N \gg 1$, we have
$$
\|I_Nf\|_{L^2_x} \simeq \|f\|_{H^s_x}.
$$
In other words, $I_N$ is a "almost-isomorphism" between $L^2_x(\R)$ and $H^s_x(\R)$. Moreover, $\|I_N \cdot\|_{L^2_x}$ defines an equivalent norm on $H^s_x(\R)$.

Now, let $u_0 \in H^s_x(\R)$ with $s_3 < s \leq 0$. Therefore exist a $T=T(\|u_0\|_{H^s_x})>0$ and a unique solution $u:[0,T] \times \R \rightarrow \R$ to \eqref{DGBOl2}. Note that, by integration by parts and the Cauchy-Schwarz inequality we obtain, formally,  for any $N \gg 1$
\begin{align*}
\|I_N u(T)\|_{L^2_x}^2 - &\|I_N u_0\|_{L^2_x}^2 \\
&= \int_0^T \frac{d}{dt} \int_\R I_N u(\tau, x) I_N u (\tau,x)\, dx \, d\tau \\
&=2\int_0^T \int_\R  I_N\left[u_t(\tau, x) \right]I_N u (\tau,x)\, dx \, d\tau \\
&=2\int_0^T \int_\R  I_N\left[-D_x^{\alpha} u_x -  u^3u_x \right](\tau,x) I_N u (\tau,x)\, dx \, d\tau\\
&=-\frac{1}{2}\int_0^T \int_\R  I_N\left[\partial_x(u^4)(\tau,x) \right]I_N u (\tau,x)\, dx \, d\tau\\
&=\frac{1}{2}\int_0^T \int_\R  \partial_x \left[ (I_Nu)^4 - I_N(u^4) \right](\tau,x)I_N u (\tau,x)\, dx \, d\tau\\
&\lesssim \|\partial_x \left[ I_N(u^4) - (I_Nu)^4 \right]\|_{X_\alpha^{0,-\frac{1}{2}^-}} \|I_Nu\|_{X_\alpha^{0,\frac{1}{2}^+}}.
\end{align*}
In the next lemma, we utilize the same approach as in \cite{MR2372424}.

\begin{lemma}
The estimate 
\begin{equation}\label{estimativa multi I_N}
\|\partial_x [ I_N(u_1u_2u_3u_4) - \prod_{j=1}^4 I_Nu_j]\|_{X_\alpha^{0,-\frac{1}{2}^-}} \lesssim N^{- \left(\alpha- \frac{3}{2}\right)^-} \prod_{j=1}^4\|I_Nu_j \|_{X_\alpha^{0,\frac{1}{2}^+}},  
\end{equation}
holds.
\end{lemma}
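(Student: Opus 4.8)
The plan is to reduce the estimate \eqref{estimativa multi I_N} to a frequency-restricted bound amenable to Lemmas \ref{Lema 1} and \ref{Lema 2}, exactly as in the proof of \eqref{estimativa multilinear bourgain}, but now with the extra cancellation coming from the difference of multipliers $I_N(u_1u_2u_3u_4)-\prod_{j=1}^4 I_Nu_j$. First I would pass to the Fourier side: writing $\xi=\xi_1+\xi_2+\xi_3+\xi_4$, the symbol of the expression inside the norm is
$$
\xi\,\Bigl(m_N(\xi)-\prod_{j=1}^4 m_N(\xi_j)\Bigr)\widehat{u_1}(\xi_1)\cdots\widehat{u_4}(\xi_4),
$$
so by Lemma \ref{Lema 1} with $s'=s=0$, $b=\tfrac12^+$, $b'=-\tfrac12^-$ it suffices to show, for all $M>1$,
\begin{equation}\label{plan:FRM}
\sup_{\xi}\int_{\xi=\sum_j\xi_j}\frac{|\xi|^2\,\bigl|m_N(\xi)-\prod_{j=1}^4 m_N(\xi_j)\bigr|^2}{\prod_{j=1}^4\langle\xi_j\rangle^{0}}\,\1_{|\Phi|<M}\,d\xi_1 d\xi_2 d\xi_3\lesssim N^{-(2\alpha-3)^-}M^{1^-},
\end{equation}
with $\Phi=\xi|\xi|^\alpha-\sum_j\xi_j|\xi_j|^\alpha$, and the analogous frequency-restricted estimate of Lemma \ref{Lema 2} when the integral is genuinely multilinear rather than reducible to a single integration. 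As in the proof of Theorem \ref{Teorema em XSB}, I would order $|\xi|\ge|\xi_1|\ge|\xi_2|\ge|\xi_3|\ge|\xi_4|$ and dispose of the region $|\xi|\lesssim N$: there $m_N(\xi)=1$ and, since the multiplier difference is supported where some $|\xi_j|\gtrsim N$ would force $|\xi_1|\sim|\xi|\gtrsim N$, the difference vanishes unless $|\xi_1|\gtrsim N$, so one only needs the high-frequency regime $|\xi_1|\sim|\xi|\gtrsim N$.

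The key new input is the pointwise gain. On the region $|\xi_1|\sim|\xi|\gg N\ge|\xi_2|,|\xi_3|,|\xi_4|$, one has $m_N(\xi_2)=m_N(\xi_3)=m_N(\xi_4)=1$, hence
$$
m_N(\xi)-\prod_j m_N(\xi_j)=m_N(\xi)-m_N(\xi_1)=\bigl(N^{-1}|\xi|\bigr)^s-\bigl(N^{-1}|\xi_1|\bigr)^s,
$$
and a mean value estimate together with $|\xi-\xi_1|=|\xi_2+\xi_3+\xi_4|\lesssim|\xi_2|$ gives
$$
\bigl|m_N(\xi)-m_N(\xi_1)\bigr|\lesssim |s|\,N^{-s}\,|\xi|^{s-1}|\xi_2|\lesssim \Bigl(\frac{|\xi_2|}{|\xi|}\Bigr)\,\Bigl(\frac{N}{|\xi|}\Bigr)^{-s}\lesssim \frac{|\xi_2|}{|\xi|}\Bigl(\frac{|\xi|}{N}\Bigr)^{-s}.
$$
If instead $|\xi_2|\gtrsim N$ as well, one uses the crude bound $|m_N(\xi)-\prod_j m_N(\xi_j)|\lesssim (|\xi_2|/N)^{-s}$ together with $|\xi_2|\sim|\xi|$ in the case $|\xi|\sim|\xi_1|\sim|\xi_2|$, which again produces a factor $(|\xi|/N)^{s}$; in every subcase the net effect is an extra factor comparable to $N^{-\sigma}|\xi|^{\sigma}$ for some $\sigma\in(0,\alpha-\tfrac32)$ with $\sigma$ as close to $\alpha-\tfrac32$ as we like, at the cost of an $|\xi|^{0^+}$ loss. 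Plugging this into the integrals $I_1,I_2$ (Case 1, non-stationary and stationary subcases via Morse's Lemma and Lemma \ref{lema 5}) and $J_1,J_2$ (Case 2) from the proof of Theorem \ref{Teorema em XSB}, the worst power of $|\xi|$ that appeared there was $|\xi|^{(3/2-\alpha)^+-3s}$, which at $s=0$ is $|\xi|^{(3/2-\alpha)^+}$; multiplying by the gain $N^{-\sigma}|\xi|^{\sigma}$ and choosing $\sigma=(\alpha-3/2)^-$ makes the total power of $|\xi|$ strictly negative, so the supremum over $\xi$ is finite and bounded by $N^{-(\alpha-3/2)^-}M^{1^-}$, which after squaring (as the estimate \eqref{plan:FRM} is stated for the square of the multiplier) yields the claimed $N^{-(2\alpha-3)^-}$.

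I expect the main obstacle to be bookkeeping the case distinctions so that the gain $(|\xi|/N)^{s}$ is always available with the right sign and the right localization — in particular, making sure that whenever the multiplier difference is non-negligible, at least two of the frequencies are comparable to $|\xi|$ (so that a factor of the form $|\xi_2|/|\xi|$ or an honest power $(|\xi|/N)^{s}$ can be extracted without destroying the frequency-restricted integrals), and that in the borderline stationary regions treated by Morse's Lemma the extra $|\xi|^{0^+}$ and $M^{0^+}$ losses from Lemma \ref{lema 5} can be absorbed. Since $\alpha>\tfrac32$ here, the surplus power $\alpha-\tfrac32>0$ is exactly what is consumed by the negative power of $N$, so there is no room to spare; one must be careful that the $1^-$/$0^+$ exponents are chosen consistently across all subcases. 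Once the frequency-restricted estimates \eqref{plan:FRM} and its Lemma \ref{Lema 2} counterpart are in place, \eqref{estimativa multi I_N} follows immediately from Lemmas \ref{Lema 1} and \ref{Lema 2} applied with $s'=s=0$.
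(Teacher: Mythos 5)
Your route is viable and reaches the right bound, but it is organized differently from the paper's. The paper does not keep the commutator together at all: it splits $\partial_x[I_N(u_1u_2u_3u_4)-\prod_j I_Nu_j]$ by the triangle inequality, notes that both terms vanish unless at least $k\geq 1$ of the input frequencies exceed $N$, and then invokes an \emph{asymmetric} variant of the smoothing estimate \eqref{estimativa multilinear com epsilon} (quoted from Lemma 10 of \cite{correia2024improvedglobalwellposednessquartic}) in which the $k$ high-frequency factors are placed in $X^{-\varepsilon/k,\frac12^+}_\alpha$ with $\varepsilon=(\alpha-\tfrac32)^-$; the factor $N^{-\varepsilon}$ then comes from $\|u_j\|_{X^{-\varepsilon/k,\frac12^+}_\alpha}\lesssim N^{-\varepsilon/k}\|I_Nu_j\|_{X^{0,\frac12^+}_\alpha}$ on frequencies $\geq N$. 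You instead keep the difference of multipliers $m_N(\xi)-\prod_j m_N(\xi_j)$ inside the frequency-restricted integrals of Lemmas \ref{Lema 1} and \ref{Lema 2} and place the gain on the output frequency. Both arguments draw the power $N^{-(\alpha-3/2)^-}$ from the same source, namely the surplus of $\alpha-\tfrac32$ derivatives that the quadrilinear FRM estimates afford at $s=s'=0$; the paper's version buys a shorter proof by outsourcing the case analysis to a ready-made lemma and by distributing the gain over the high \emph{inputs} (which is the robust choice when the large frequencies sit on the inputs rather than on $\xi$), while yours is self-contained within the FRM machinery already set up for Theorem \ref{Teorema em XSB}, at the cost of re-running every case of that proof with the extra multiplier and of justifying that the sup over $\xi$ may be restricted to $|\xi|\gtrsim N$ after the symmetrization.

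One step should be repaired before this is written up: the mean-value/commutator computation is neither needed nor does it deliver what you claim. The bound $|m_N(\xi)-m_N(\xi_1)|\lesssim \frac{|\xi_2|}{|\xi|}(|\xi|/N)^{-s}$ produces a factor tied to $|s|$ and to $|\xi_2|/|\xi|$, not a factor $N^{-\sigma}|\xi|^{\sigma}$ with $\sigma$ up to $\alpha-\tfrac32$; the exponent $\alpha-\tfrac32$ has nothing to do with the multiplier and everything to do with the dispersion. The correct (and simpler) statement is that $|m_N(\xi)-\prod_j m_N(\xi_j)|\leq 2$ and that on its support the largest frequency is $\gtrsim N$, so that the trivial inequality $1\leq (|\xi|/N)^{\sigma}$ holds there for \emph{every} $\sigma>0$; the constraint $\sigma<\alpha-\tfrac32$ is then imposed by the FRM integrals, whose bounds at $s=s'=0$ carry the surplus $|\xi|^{-2(\alpha-3/2)^-}$ in the largest frequency. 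Also note that, as written, ``multiplying $|\xi|^{(3/2-\alpha)^+}$ by $|\xi|^{\sigma}$ with $\sigma=(\alpha-3/2)^-$ makes the power strictly negative'' only survives if the small losses are chosen consistently, since the surplus is consumed exactly; this is the same tightness the paper faces, but in your formulation it is the only thing standing between a finite and an infinite supremum, so the $0^+$ and $1^-$ exponents must be fixed once and tracked through all of Cases 1 and 2.
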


\dem Denote by $\xi_j$  the frequencies corresponding to the function $u_j$, with $1\leq j \leq 4$. If all $ |\xi_j| \leq N $, then either $ |\xi| \leq N $, resulting in no contribution. Thus we can assume, without loss of generality,  there are $k$ large frequencies for some $1 \leq k \leq 4$. By symmetry, we can assume $|\xi_j|\geq {N}$ with $j=1,...,k$ and $|\xi_j| \leq N$ for $j=k+1,...,4$.  By applying arguments similar to those used to derive \eqref{estimativa multilinear com epsilon} (see \cite{correia2024improvedglobalwellposednessquartic}, Lemma 10), it is possible to obtain
\begin{align*}
\|\partial_x \left[I_Nu_1I_Nu_2I_Nu_3I_Nu_4\right]\|_{X^{0,-\frac{1}{2}^-}_\alpha} & \lesssim \|I_Nu_1\|_{X^{-\frac{\varepsilon}{k},\frac{1}{2}^+}_\alpha}\cdots\|I_Nu_k\|_{X^{-\frac{\varepsilon}{k},\frac{1}{2}^+}_\alpha} \\
& \quad \times \|I_Nu_{k+1}\|_{X^{0,\frac{1}{2}^+}_\alpha}\cdots\|I_Nu_4\|_{X^{0,\frac{1}{2}^+}_\alpha}\\
&\lesssim N^{- k\frac{\varepsilon}{k}}\|I_Nu_1\|_{X^{0,\frac{1}{2}^+}_\alpha}\cdots\|I_Nu_k\|_{X^{0,\frac{1}{2}^+}_\alpha} \\
&\quad  \times  \|I_Nu_{k+1}\|_{X^{0,\frac{1}{2}^+}_\alpha}\cdots\|I_Nu_4\|_{X^{0,\frac{1}{2}^+}_\alpha}\\
& \simeq N^{- \varepsilon} \prod_{j=1}^4\|I_Nu_j \|_{X^{0,\frac{1}{2}^+}_\alpha},
\end{align*}
where $\varepsilon=\left(\alpha- \frac{3}{2}\right)^-$. Similarly, we obtains
\begin{align*}
\|\partial_x  I_N(u_1u_2u_3&u_4)\|_{X^{0,-\frac{1}{2}^-}_\alpha}\\
&\lesssim \|u_1\|_{X^{-\frac{\varepsilon}{k},\frac{1}{2}^+}_\alpha}\cdots\|u_k\|_{X^{-\frac{\varepsilon}{k},\frac{1}{2}^+}_\alpha}\|u_{k+1}\|_{X^{0,\frac{1}{2}^+}_\alpha}\cdots\|u_4\|_{X^{0,\frac{1}{2}^+}_\alpha}\\
& \simeq \|u_1\|_{X^{-\frac{\varepsilon}{k},\frac{1}{2}^+}_\alpha}\cdots\|u_k\|_{X^{-\frac{\varepsilon}{k},\frac{1}{2}^+}_\alpha}\|I_Nu_{k+1}\|_{X^{0,\frac{1}{2}^+}_\alpha}\cdots\|I_Nu_4\|_{X^{0,\frac{1}{2}^+}_\alpha}.
\end{align*}
Since, for any $r_1 \leq r_2$ it is valid that for $j=1,...,k$
\begin{align*}
\|u_j\|_{X^{r_1,\frac{1}{2}^+}_\alpha} &\lesssim N^{r_1-r_2} \|u_j\|_{X^{r_2,\frac{1}{2}^+}_\alpha}\\
& \lesssim N^{r_1} \|I_Nu_j\|_{X^{0,\frac{1}{2}^+}_\alpha},
\end{align*}
we can conclude
\begin{align*}
\|\partial_x \left[ I_N(u_1u_2u_3u_4)\right]\|_{X^{0,-\frac{1}{2}^-}_\alpha} &\lesssim N^{- \varepsilon} \prod_{j=1}^4\|I_Nu_j \|_{X^{0,\frac{1}{2}^+}_\alpha}.
\end{align*}
Which finish the proof of the lemma.\qed

By applying \eqref{estimativa multi I_N} to the previous estimates, we obtain that
\begin{align}
\nonumber \|I_N u(T)\|_{L^2_x}^2 - \|I_N u_0\|_{L^2_x}^2 &\lesssim \|\partial_x \left[ I_N(u^4) - (I_Nu)^4 \right]\|_{X^{0,-\frac{1}{2}^-}} \|I_Nu\|_{X^{0,\frac{1}{2}^+}}\\
&\lesssim N^{- \left(\alpha- \frac{3}{2}\right)^-} \|I_Nu \|_{X^{0,\frac{1}{2}^+}_\alpha}^5.\label{I_N}
\end{align}
To advance our argument, we need the following variation of the local well-posedness result.
\begin{theorem}[Variant of local well-posedness]\label{LWP para IN}
The problem \eqref{DGBOl2} is locally well-posed in $I^{-1}_NL^2_x:= \left(H^s_x(\R),\|I_N \cdot\|_{L^2_x} \right)$ for $s>s_3$ with time of existence  $T>0$. Furthermore, we have 
$$
T \gtrsim \|I_Nu_0\|_{L^2_x}^{-\theta}
$$
for some $\theta>0$ and
\begin{equation}\label{I_N (2)}
\|I_N u \|_{X^{0, \frac{1}{2}^+}_\alpha} \lesssim \|I_Nu_0\|_{L^2_x}.
\end{equation}
\end{theorem}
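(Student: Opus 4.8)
The plan is to rerun the contraction argument behind Theorem~\ref{Teorema em XSB}, but now measuring the solution in the $I_N$--weighted Bourgain norm $\|I_N\,\cdot\,\|_{X^{0,\frac12^+}_\alpha}$ and tracking the dependence of the existence time on $\|I_Nu_0\|_{L^2_x}$ instead of on $\|u_0\|_{H^s_x}$. Given $u_0\in H^s_x(\R)$ with $s>s_3$, fix $N\gg1$ once and for all; for this fixed $N$ the norm $\|I_N\,\cdot\,\|_{X^{0,b}_\alpha}$ is equivalent to $\|\,\cdot\,\|_{X^{s,b}_\alpha}$, so no new functional framework is needed. I would look for a fixed point of
\begin{equation*}
\Gamma u(t):=\psi(t)V_\alpha(t)u_0+\mu\,\psi_T(t)\int_0^tV_\alpha(t-\tau)\partial_x(u^4)(\tau)\,d\tau
\end{equation*}
in the ball $B_R:=\{u:\ \|I_Nu\|_{X^{0,\frac12^+}_\alpha}\le R\}$ with $R\simeq\|I_Nu_0\|_{L^2_x}$. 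Since $I_N$ commutes with $V_\alpha(t)$ and with $\partial_x$, the linear $X^{s,b}$ estimates together with the standard time--localization bound for the Duhamel operator in Bourgain spaces (cf.\ \cite{MR1215780,MR2372424}) give, for a suitable $\delta>0$,
\begin{equation*}
\|I_N\Gamma u\|_{X^{0,\frac12^+}_\alpha}\ \lesssim\ \|I_Nu_0\|_{L^2_x}+T^{\delta}\,\big\|\partial_xI_N(u^4)\big\|_{X^{0,-\frac12^+}_\alpha}.
\end{equation*}

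For the nonlinear term I would split $\partial_xI_N(u^4)=\partial_x\big((I_Nu)^4\big)+\partial_x\big[I_N(u^4)-(I_Nu)^4\big]$. The first piece is handled by the quartic estimate \eqref{estimativa multilinear bourgain} at regularity $0$ applied to $I_Nu$ --- legitimate exactly because $\alpha>\tfrac32$ forces $s_3<0$ --- giving $\|\partial_x((I_Nu)^4)\|_{X^{0,-\frac12^+}_\alpha}\lesssim\|I_Nu\|_{X^{0,\frac12^+}_\alpha}^4$. The second (commutator) piece is handled by \eqref{estimativa multi I_N} with all four inputs equal to $u$, which even produces a factor $N^{-(\alpha-\frac32)^-}$ and is in particular $\lesssim\|I_Nu\|_{X^{0,\frac12^+}_\alpha}^4$. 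Hence $\|I_N\Gamma u\|_{X^{0,\frac12^+}_\alpha}\lesssim\|I_Nu_0\|_{L^2_x}+T^{\delta}\|I_Nu\|_{X^{0,\frac12^+}_\alpha}^4$, and the analogous difference estimate --- using the pointwise bound \eqref{pointwise_difference_bound} with multilinearity (as in \eqref{desigualde com potencia}) together with the difference version of \eqref{estimativa multi I_N} --- gives
\begin{equation*}
\|I_N(\Gamma u-\Gamma v)\|_{X^{0,\frac12^+}_\alpha}\ \lesssim\ T^{\delta}\big(\|I_Nu\|_{X^{0,\frac12^+}_\alpha}+\|I_Nv\|_{X^{0,\frac12^+}_\alpha}\big)^3\|I_N(u-v)\|_{X^{0,\frac12^+}_\alpha}.
\end{equation*}

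From here, $\Gamma$ maps $B_R$ into itself and is a contraction as soon as $T^{\delta}R^{3}\ll1$; since $R\simeq\|I_Nu_0\|_{L^2_x}$ this holds provided $T\lesssim\|I_Nu_0\|_{L^2_x}^{-\theta}$ with $\theta:=3/\delta>0$, which is the asserted lower bound on the time of existence. The fixed point $u$ lies in $B_R$, i.e.\ $\|I_Nu\|_{X^{0,\frac12^+}_\alpha}\lesssim\|I_Nu_0\|_{L^2_x}$, which is \eqref{I_N (2)}; uniqueness in $X^{s,\frac12^+}_\alpha$ and local uniform continuity of the flow follow verbatim from the proof of Theorem~\ref{Teorema em XSB}, now read in the $\|I_N\,\cdot\,\|$--norm. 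The main obstacle, as usual in this scheme, is extracting the positive power $T^{\delta}$: one must check that the multilinear estimates quoted above hold with genuine room to spare in the time--modulation exponent, i.e.\ that the integrals in the proofs of Theorem~\ref{Teorema em XSB} and of \eqref{estimativa multi I_N} are bounded by $M^{1-\varepsilon_0}$ for some fixed $\varepsilon_0>0$ (which they are, thanks to the strict inequalities $M^{1^-}$, $s>s_3$), and then to choose $b=\tfrac12+\tfrac{\varepsilon_0}{8}$ accordingly; everything else is a routine rerun of Theorem~\ref{Teorema em XSB}.
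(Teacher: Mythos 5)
Your proposal is correct and follows exactly the route the paper intends: the paper omits the proof of Theorem \ref{LWP para IN}, stating only that it ``follows a similar argument to the one presented earlier,'' and your rerun of the contraction in the $\|I_N\cdot\|_{X^{0,\frac12^+}_\alpha}$ norm, with the splitting into $(I_Nu)^4$ plus the commutator handled by \eqref{estimativa multi I_N}, is that argument spelled out. You also correctly identify the one genuinely delicate point, namely that the frequency-restricted bounds give $M^{1^-}$ with room to spare so that the Duhamel term produces a positive power $T^\delta$, which is what yields $T\gtrsim\|I_Nu_0\|_{L^2_x}^{-\theta}$.
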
 

The proof of this result follows a similar argument to the one presented earlier and will be omitted here. \qed

The estimates \eqref{I_N} and  \eqref{I_N (2)} ensure that
\begin{equation}\label{Lei de quase conservação}
\|I_N u(T)\|_{L^2_x}^2 - \|I_N u_0\|_{L^2_x}^2 \lesssim N^{- \left(\alpha- \frac{3}{2}\right)^-} \|I_Nu_0 \|_{L^2_x}^5.
\end{equation}

Now, let $u$ be a solution of \eqref{DGBOl2} in time interval $[0,T]$. Then
$$
u_\lambda (t,x)= \lambda^{-\frac{\alpha}{3}}u\left(\frac{t}{\lambda^{\alpha+1}}, \frac{x}{\lambda} \right),
$$
is a solution with time interval $[0, \lambda^{\alpha+1} T]$ and initial data $u^\lambda_0 (x)= \lambda^{-\frac{\alpha}{3}}u_0 (\lambda^{-1} x)$. A simple calculation shows that
$$
\|I_N u_0^{\lambda}\|_{L^2_x} \lesssim N^{-s} \lambda^{\frac{1}{2}-\frac{\alpha}{3}-s} \|I_N u_0\|_{L^2_x}.
$$
Choose $\lambda=\lambda(N)$ so that
$$
\lambda \sim N^{\frac{6s}{3-2 \alpha-6s}}.
$$
Thus obtaining that
$$
\|I_N u_0^{\lambda}\|_{L^2_x} \lesssim \|u_0\|_{H^s_x}:= \delta_0.
$$
Our goal is to construct the solution of \eqref{DGBOl2} over the time interval $[0, \lambda^{\alpha+1} T]$. Taking $\delta_0 \ll 1$ and iterating Lemma \ref{LWP para IN}, for any $M>0$, we obtain a solution $u_\lambda$ defined in $[0,M]$ with $u_\lambda(0)= u_0^\lambda$, as long as
\begin{equation}\label{condição de iteração IN}
  \|I_N u_\lambda(k)\|_{L^2_x} \lesssim \delta_0,  
\end{equation}
for $1 \leq k <M$.  Using \eqref{Lei de quase conservação}, we have
\begin{align*}
\|I_N u_\lambda(k)\|_{L^2_x}^2 &\lesssim \delta_0^2 + kN^{- \left(\alpha- \frac{3}{2}\right)^-}\delta_0^5\\
& \lesssim \delta_0 \left(1+kN^{- \left(\alpha- \frac{3}{2}\right)^-}\right).
\end{align*}
Taking $s> -\frac{(2\alpha -3)^2}{24\alpha - 6}$ and $N \gg 1$ large enough, we have
$$
N^{\left(\alpha- \frac{3}{2}\right)^+} \gtrsim  N^{\frac{6s(\alpha+1)}{3-2 \alpha-6s}}T \sim \lambda^{\alpha+1} T .
$$
Thus, the condition \eqref{condição de iteração IN} is satisfied for $ j \leq \lambda^{\alpha+1}T $. Hence, the solution $u_\lambda$ is defined in time interval $[0,\lambda^{\alpha+1}T]$. Therefore,
$$
u(t,x) := \lambda^{\frac{\alpha}{3}} u\left(\lambda^{\alpha+1} t, \lambda x\right)
$$
is a solution of \eqref{DGBOl2} on $[0, T]$ for any arbitrary $ T > 0 $. Additionally, since
$$
\lambda \sim N^{\frac{6s}{3-2 \alpha-6s}} \quad \text{and} \quad\lambda^{\alpha+1} T\lesssim N^{\left(\alpha- \frac{3}{2}\right)^+},
$$
after performing some calculations, we find that
$$
\lambda\lesssim T^{\frac{6\kappa(s,\alpha)}{2\alpha-3}}
$$
with 
$$
\kappa(s,\alpha):=\frac{2s}{(2\alpha-3)^2(3-2\alpha-6s)-12(2\alpha-3)(\alpha+1)s}.
$$
Thus, for $0 \leq t \leq T$, by \eqref{condição de iteração IN}, we obtain
\begin{align*}
\|u(t)\|_{H^s_x} &\lesssim \lambda^{\frac{\alpha}{3}- \frac{1}{2}} \|I_N  u_\lambda(\lambda^{\alpha+1}t)\|_{L^2_x}\\
& \lesssim T^{\kappa(s,\alpha)}  \|u_0\|_{L^2_x}.
\end{align*}
Consequently, the proof of Theorem \ref{BOA COLOCAÇAO GLOBAL I METHOD} is complete. 

\qed

\textbf{Acknowledgments:} The author L. Campos was partially supported by CNPq grant 07733/2023-8 and the CAPES/Cofecub grant 88887.879175/2023-00. F. Linares was partially supported by CNPq grant 310329/2023-0 and FAPERJ grant E-26/200.465/2023. The author T.S.R. Santos was partially supported by CNPq grant 140993/2021-5 and is grateful to IST, Universidade de Lisboa, where part of this work was carried out. The authors would like to thank Argenis Mendez for his valuable feedback on an earlier version of this manuscript.

% \begin{theorem}[Global well-posedness]
% Let $\frac{3}{2}<\alpha \leq 2$ and $u_0 \in H^s_x(\R)$ with  $s> -\frac{(2\alpha -3)^2}{24\alpha - 6}$. Then, the solution $u$ of \eqref{DGBOl2} according to Lemma \ref{LWP para IN} extends uniquely to any time interval $[0,T]$. Moreover, the solution satisfies 
% \begin{align*}
% \sup_{t \in [0,T]}\|u(t)\|_{H^s_x} \lesssim \left(1+ T \right )^{\frac{2s}{(2\alpha-3)^2(3-2\alpha-6s)-12(2\alpha-3)(\alpha+1)s}} \cdot \|u_0\|_{L^2_x}.
% \end{align*}
% \end{theorem}

% { \color{red}
% \begin{proposition}[Perturbation theory] Let $s\geq s_k$ and $\tilde u$ be a $C_t H^ {s}_x$ solution to 
% \begin{equation}
%     \partial_t \tilde u + D^{\alpha}_x \partial_x  \tilde u +  {\tilde u}^k \partial_x  \tilde u = \partial_x e,
% \end{equation}
% satisfying
% \begin{equation}
%     \|\tilde u\|_{S([0,+\infty))} + \|\tilde u\|_{\dot X^{s_k}([0,+\infty))} \leq L < +\infty
% \end{equation}
% and
% \begin{equation}
%     \|e\|_{\dot N^{s_k}([0,\infty)} \leq \varepsilon.
% \end{equation}
%     If $\|V_\alpha(t)(u_0 - \tilde u(0))\|_{S([0,+\infty)} \leq \varepsilon$, then there exists a solution $u \in \dot X^{s_k}([0,+\infty)) \cap S([0,+\infty))$ to \eqref{DGBO} such that
%     \begin{equation}
%         \|u-\tilde u\|_{S([0,+\infty))} + \|u-\tilde u\|_{\dot X^{s_k}([0,+\infty))} \lesssim L.
%     \end{equation}
%     Moreover, $u$ scatters forward in time in $\dot H^{s_k}$.

% \end{proposition}

% \cite{MR3388786}
% }

\bibliographystyle{abbrv}

{{% additional braces for segregating \footnotesize
  \bigskip
  \footnotesize
% \author{
% Luccas Campos\footnote{ e-mail: luccas@ufmg.br. This author was partially supported by CNPq grant 07733/2023-8 and the CAPES/Cofecub grant 88887.879175/2023-00.} \,
% Felipe Linares \footnote{IMPA, Instituto de Matemática Pura e Aplicada, Rio de Janeiro, RJ, 22460-320, Brazil, e-mail: linares@impa.br. This author was partially supported by CNPq grant 310329/2023-0 and FAPERJ grant E-26/200.465/2023} \,
% Thyago S. R. Santos \footnote{IMPA, Instituto de Matemática Pura e Aplicada, Rio de Janeiro, RJ, 22460-320, Brazil, e-mail: thyago.souza@impa.br. This author was partially supported by CNPq grant 140993/2021-5} \,
% }
\vspace{1cm}
L. Campos, \textsc{Departamento de Matemática, UFMG, Belo Horizonte, MG, Brazil.} \textit{E-mail address:} \texttt{luccas@ufmg.br}.
  
  \vspace{3mm}

F. Linares, \textsc{IMPA, Instituto de Matemática Pura e Aplicada, Rio de Janeiro, RJ, 22460-320, Brazil.} \textit{E-mail address:} \texttt{linares@impa.br}.
    
  \vspace{3mm}

T. S. R. Santos, \textsc{IMPA, Instituto de Matemática Pura e Aplicada, Rio de Janeiro, RJ, 22460-320, Brazil.} \textit{E-mail address:} \texttt{thyago.souza@impa.br}.
}}

 \end{document}